\newtheorem{thm}{Theorem}
\newtheorem{prop}{Proposition}[section]
\newtheorem{lem}[prop]{Lemma}
\theoremstyle{definition}
\newtheorem{defi}[prop]{Definition}
\newtheorem{notation}[prop]{Notation}
\newtheorem{cor}[prop]{Corollary}
\newtheorem{rem}[prop]{Remark}
\newtheorem{ex}[prop]{Example}
\newcommand{\kr}{\mathbbm{k}}
\def\rk{\operatorname{rk}}
\def\stab{\operatorname{stab}}
\def\lcover{{\; <\!\!\! \cdot \;}}
\def\IS{\mathcal{I}_{\mathfrak S}}
\def\PS{\mathcal{P}_{\mathfrak S}}
\def\supp{\operatorname{supp}}
\def\Supp{\operatorname{Supp}}
\renewcommand{\check}[1]{{\overline{#1}}}
\newcommand{\sz}[1]{\overline{\overline{#1}}}
\newcommand{\downset}[1]{\lceil #1 \rceil}
\def\SSS{\mathscr S}
\def\II{\mathcal I}
\def\stella{{\normalfont ({\bf $\star$})}}
\begin{document}
\author{Alessio D'Al\`i}
\address[Alessio D'Al\`i]{Max-Planck-Institut f\"ur Mathematik in den Naturwissenschaften, Inselstra{\ss}e 22, 04103 Leipzig, Germany}
\curraddr{Mathematics Institute, University of Warwick, Coventry CV4 7AL, United Kingdom}
\email{Alessio.D-Ali@warwick.ac.uk}
\author{Emanuele Delucchi}
\address[Emanuele Delucchi]{D\'epartement de Math\'ematiques, Universit\'e de Fribourg, Chemin du Mus\'ee 23, 1700 Fribourg, Switzerland}
\email{emanuele.delucchi@unifr.ch}
\title[Stanley--Reisner rings for symmetric simplicial complexes]{
Stanley--Reisner rings for symmetric simplicial complexes, G-semimatroids and Abelian arrangements
}
\subjclass[2010]{13F55 (primary), 05E18, 13A50, 52C35, 55U10, 06A11 (secondary)}
\maketitle

\begin{abstract}
We extend the notion of face rings of simplicial complexes and simplicial posets to the case of finite-length (possibly infinite) simplicial posets with a group action. The action on the complex induces an action on the face ring, and we prove that the ring of invariants is isomorphic to the face ring of the quotient simplicial poset under a mild condition on the group action. We also identify a class of actions on simplicial complexes that preserve the homotopical Cohen--Macaulay property under quotients. 

When the acted-upon poset is the independence complex of a semimatroid, the $h$-polynomial of the ring of invariants can be read off the Tutte polynomial of the associated group action. Moreover, in this case an additional condition on the action ensures that the quotient poset is Cohen--Macaulay in characteristic $0$ and every characteristic that does not divide an explicitly computable number. This implies the same property for the associated Stanley--Reisner rings. In particular, this holds for independence posets and rings associated to toric, elliptic and, more generally, $(p,q)$-arrangements. As a byproduct, we prove that posets of connected components (also known as posets of {layers}) of such arrangements are Cohen--Macaulay with the same condition on the characteristic. 
\end{abstract}

\section{Introduction}

\subsection{Background}
A classical construction associates a commutative ring, called {\em Stanley-Reisner ring}, to every finite simplicial complex.
In the wake of pioneering work in the 1970s by R.\ Stanley, M.\ Hochster and G.\ Reisner, a rich research activity has blossomed around this bridge between combinatorics and topology on the one side and commutative algebra on the other, leading to major advances such as Stanley's proof of the Upper Bound Conjecture \cite{StanleyUBC}.

A recurring theme in this research area is to investigate properties of the class of Stanley-Reisner rings associated to special (combinatorially defined) families of simplicial complexes. 
A good example  is given by simplicial complexes that arise as the complex of independent sets of a matroid 
(``matroid complexes'' \cite{BjoAltro}). Such complexes are defined by abstract properties modeled on the family of linearly independent subsets of a given collection of vectors in a vector space, and the associated Stanley-Reisner rings have attracted a large body of work. A topic of particular interest is a sequence of integers related to the Hilbert series of the defining ideal, namely the coefficients of the ring's $h$-polynomial, 
which is strongly related to classical polynomial invariants of matroids. For instance, Stanley-Reisner rings of matroids are {\em Cohen-Macaulay} \cite{BjoAltro}, which implies positivity of said coefficients. The study of the properties of such integer sequences, especially as it relates to concavity properties, 
is a topical and very active field \cite{AHK} in which many questions and conjectures remain open to date.

{\em Simplicial posets} are a generalization of posets of faces of simplicial complexes  
(see Definition \ref{df_simplicialposet}). Stanley \cite{StaSP} defined a ``face ring'' associated to any finite simplicial poset which, in the special case of posets of faces of simplicial complexes, is isomorphic to the classical Stanley-Reisner ring. 

\subsection{Motivation and context}
The study of symmetries in the form of group actions on simplicial complexes has classical roots \cite{Bredon} and came into the focus of growing interest over the last years.  Significant results have been obtained in the combinatorial study of algebraically defined objects \cite{ElSk,3S} as well as in using symmetries in order to advance in combinatorial problems \cite{Lutz,McMS,Novik}, a special mention being deserved by the impact of the study of group actions on topological combinatorics \cite{AAF,Jiri}.  

Moreover, as we will discuss below in more detail, recent developments in the theory of arrangements lead to the study of structural aspects of group actions on matroids and posets. 
A peculiarity of the latter setup is that it does not meet the standard finiteness (or compactness) assumptions on which most of the extant literature relies (here, to the above-mentioned references we add some specific literature on group actions on posets, e.g., \cite{StaG,ThWe}).

It is then natural to wonder about the algebraic implications of group actions on complexes or posets in terms of the associated Stanley-Reisner rings. In fact, this line of research has been pursued in the literature \cite{GaSt,ReinerThesis,StaInv} but, again, always under finiteness conditions.
In particular, Victor Reiner proved that when a finite group $G$ acts on a balanced simplicial poset $P$ preserving labels then, for every integer $k$ not dividing the cardinality of $G$, if $P$ is Cohen--Macaulay in characteristic $k$ then so is the quotient poset $P/G$ \cite[Theorem 2.3.2]{ReinerThesis}. Here we study the preservation of the Cohen-Macaulay property in the case of (possibly infinite) groups acting on (possibly infinite) complexes without balancing conditions.

\subsection{Aim and results} We propose an enrichment of the Stanley-Reisner theory by considering group actions on finite-dimensional (but possibly infinite) simplicial complexes. In fact, in this context we find that the natural framework  is that of finite-length simplicial posets. We associate a face ring $\mathcal R(P)$ to each such simplicial poset $P$ (Definition \ref{df:anello}) and, given an action of a group $G$ on the poset, we study the ring $\mathcal R(P)^G$ of invariants of the induced action on the ring. We characterize precisely the group actions for which the quotient poset $P/G$ is again simplicial: these turn out to be the type of actions called {\em translative} in \cite{DR} (Lemma \ref{lem:simplicialquotient}). We prove that, given a translative action of a group $G$ on a simplicial poset $P$, the ring of invariants $\mathcal R(P)^G$ is isomorphic to the ring $\mathcal R (P/G)$ associated to the quotient poset (Theorem \ref{thm:invariant}).

We introduce a class of group actions we call {\em decoupled}
(a condition strictly stronger than translativity, see Definition \ref{df:decoupled}) and we prove that quotients of posets of faces of finite-dimensional homotopy Cohen--Macaulay simplicial complexes are again homotopy Cohen-Macaulay when the action is decoupled and the group is abelian (Theorem \ref{thm:hcm}). 

Then we turn to the matroidal case,  generalizing some of the properties of Stanley--Reisner rings of matroids to the case of semimatroids with group actions.
 We obtain that, if $P$ is the poset of independent sets of a semimatroid and the group action is {\em refined} (i.e., satisfies a condition stronger than translativity but that does not imply decoupling), then the (finite) poset $P/G$ (and the associated ring) is Cohen--Macaulay in characteristic $0$ and every characteristic not dividing an explicitly computable number $\delta$ (Theorem \ref{thm_CMSM}). Such restrictions on the characteristic arise because we rely on a classical lemma by Bredon \cite{Bredon}, but they are not artefacts of the proof: see Example \ref{RE:B} and \cite[Section 8]{PP}.
 
 Moreover, the characteristic polynomial of $P/G$ and  the $h$-polynomial of the ring $\mathcal R(P/G)$ are evaluations of the Tutte polynomial associated to any translative action on a semimatroid \cite[\S 3.4]{DR}.

As a byproduct, we prove that the quotient of any rank-finite geometric semilattice with respect to a translative and refined action is Cohen--Macaulay with the same conditions on the characteristic as the associated poset of independent sets (Theorem \ref{thm:PGCM}).

\subsection{Application: Abelian arrangements}\label{ss:AA1} Many aspects of the classical theory of arrangements of hyperplanes are being extended to encompass {\em toric arrangements} and {\em elliptic arrangements}. The aim is a general topological and combinatorial theory of {\em Abelian arrangements}. In the following we give a quick primer in this subject and refer to Section \ref{sec:AA} for more. 

An Abelian arrangement is a finite set $\mathscr A$ of level sets of group homomorphisms $\mathbb G^d\to \mathbb G$, where $\mathbb G$ is a complex algebraic group of dimension one.

In this context, a main combinatorial invariant is the {\em poset of layers}, i.e., the set
\begin{equation}\label{eq:PLA}
    \mathcal C (\mathscr A) := \{\textrm{conn. comp. of } \cap \mathscr X \mid \mathscr X\subseteq \mathscr A\}
\end{equation}
of connected components of intersections of subsets of $\mathcal A$, partially ordered by reverse inclusion \cite{dCP2,Zaslav}. 
There is as of yet little understanding of the structure of such posets  beyond linear arrangements, except from the case of  Weyl-type arrangements where the posets are known to be shellable \cite{DeGiPa} based on the explicit description given by Bibby \cite{Bibby2}. 

In the case of hyperplanes ($\mathbb G = \mathbb C$) this poset has the structure of a geometric lattice, and is equivalent to the arrangement's {\em matroid} data. 
The case of toric arrangements ($\mathbb G = \mathbb C^*$) has recently been in the focus of a considerable amount of research that was at first motivated by applications to commutative algebra \cite{BPS} and partition functions \cite{dCP2}, but recently gained momentum as an independent topic. Research on topological \cite{dACDMP,dCP1,Pagaria} and combinatorial \cite{ERS,Law,Moc1} aspects of toric (and elliptic, $\mathbb G=\mathbb E$, e.g, \cite{Bibby}) arrangements reaffirmed the importance of the poset $\mathcal C (\mathscr A)$.

In particular, the theory of {\em arithmetic matroids} \cite{BM,dAM} was developed as a combinatorial framework for toric arrangements, but the poset structure is not described by the arithmetic matroid (nor by an even more refined invariant, the matroid over $\mathbb Z$ \cite{FM}): Pagaria \cite{Pagaria2} constructed two central toric arrangements with non-isomorphic posets of layers but isomorphic arithmetic matroid (resp.\ matroid over $\mathbb Z$).

An attempt at a structural characterization of posets of layers of Abelian arrangements (that distinguishes the examples of \cite{Pagaria2}) has been carried out in \cite{DR} along the following lines (see Section \ref{sec:AA} for a more precise treatment).

The universal covering space of $\mathbb G^d$ is $\mathbb C^d$, and under the universal covering morphism the arrangement $\mathscr A$ lifts to a periodic arrangement $\mathscr A ^\upharpoonright$ of affine hyperplanes. An affine arrangement such as $\mathscr A ^\upharpoonright$ is customarily described by the associated {\em semimatroid} \cite{Ard,DR,Kawa} or, equivalently, by the poset $\mathcal C (\mathscr A^\upharpoonright)$ which in this case is naturally a {\em geometric semilattice}. The periodicity group acts naturally on this poset, and the quotient poset is isomorphic to $\mathcal C (\mathscr A)$ \cite[Remark 2.3]{DR}. The approach of \cite{DR}, then, is to study group actions on semimatroids (or, equivalently, on geometric semilattices) and to view the quotients of such actions as the natural framework for an abstract combinatorial theory of posets of layers of Abelian arrangements.
 In this context, our results imply the following.

    \begin{itemize}
        \item To every Abelian arrangement is naturally associated a Stanley--Reisner ring via the associated periodic semimatroid. 
        This ring is Cohen--Macaulay in characteristic $0$ and every characteristic not dividing an explicitly computable number $\delta_{\mathscr A}$, and its $h$-polynomial is an evaluation of the action's Tutte polynomial.
    \item The poset of layers of every Abelian arrangement is Cohen--Macaulay in characteristic $0$ and every characteristic not dividing $\delta_{\mathscr A}$, and its (rational) homotopy type is determined by the Tutte polynomial of the associated action.
         In the special case of hyperplane arrangements, this recovers the classical theory. In the case of (central) toric arrangements our rings are isomorphic to those studied by Martino \cite{Martino} and Lenz \cite{Lenz}, and the action's Tutte polynomial is Moci's arithmetic Tutte polynomial \cite{Moc1}.
\end{itemize}

We obtain similar results also in the even broader context given by {\em $(p,q)$-arrangements}, part of a class of arrangements in Abelian Lie groups studied by Liu, Tran and Yoshinaga \cite{LTY}. 

\subsection{Structure of the paper} We will start by laying out some basic material on group actions on posets (Section \ref{sec_basics}) and on complexes and posets (Section \ref{sec_sicosipo}), as well as recalling some background on topological and algebraic aspects (Section \ref{sec_toal}). In Section \ref{sec:CFSP} we define Stanley--Reisner rings for general finite-length simplicial posets and  prove  that we recover the classical theory in the case of trivial actions on finite posets. 
The naturality of translative actions with respect to taking invariant rings, resp.\ poset quotients is discussed in Section \ref{sec:IA}, while Section \ref{sec:RACM} presents our result about preservation of Cohen-Macaulayness under decoupled actions.

Then we turn to the matroidal case in Section \ref{sec:matroids}, where we introduce refined group actions on (semi)matroids and prove our results on the Cohen--Macaulay property for quotients of posets of independent sets and of flats. The application to the case of Abelian arrangements and $(p,q)$-arrangements is discussed in Section \ref{sec:AA}, after a quick review of the context.

\subsection{Acknowledgements} We would like to thank  Aldo Conca, Florian Frick, Roberto Pagaria, Giovanni Paolini and Tim R\"omer for their substantive feedback in early stages of our work. We are grateful to Christos Athanasiadis, Ben Blum-Smith, Jan Draisma, Matthias Lenz and Victor Reiner for helpful observations on the first arXiv version. We thank an anonymous referee for pointing out a gap in the previous version, which led us to the present, corrected statement of the results in sections \ref{sec:RACM}-\ref{sec:AA}.

The research we report on  began during Alessio D'Al\`i's stay at the University of Fribourg, which was supported by a Swiss Government Excellence Scholarship. 
Emanuele Delucchi was supported by the Swiss National Science Foundation professorship grant PP00P2\_150552/1; he also acknowledges the friendly hospitality of the Max Planck Institute for Mathematics in the Sciences in Leipzig.

\section{Group actions on posets}\label{sec_basics}

This section is devoted to recalling some basics about posets and laying some groundwork for the rest of the article. As a reference, we can point to the book \cite{Aigner} for a treatment of finite-length posets as well as the standard reference (for the finite case) by Stanley \cite{Sta}.

\subsection{Generalities on posets} 

A partially ordered set, for short \emph{poset}, is a set $P$ with a partial order relation $\leq$ (i.e., a reflexive, antisymmetric and transitive binary relation). As usual, $x<y$ means $x\leq y, x\neq y$. We write $x\lcover y$ if $x < y$ and $x\leq z < y$ implies $z=x$ (in this case we say that ``$y$ {\em covers} $x$''). We often only mention $P$ when the order relation is understood. 
A morphism of posets is an order-preserving function; it is an isomorphism if it has an order-preserving inverse. 

\begin{ex}\label{boolean}
Let $n\in \mathbb N$ and let $B_n$ denote the poset of all subsets of $\{1,2,\ldots,n\}$ partially ordered by inclusion. A {\em Boolean algebra on $n$ elements} is any poset isomorphic to $B_n$. 
\end{ex}

Let $\operatorname{Aut}(P)$ denote the set of {\em automorphisms} of $P$, i.e., of all isomorphisms from $P$ to itself;  $\operatorname{Aut}(P)$ is a group with respect to composition of functions.

\begin{defi}\label{df:poset}
An {\em action} $G\circlearrowright P$ of a group $G$ on a poset $P$ is a group homomorphism $G\to \operatorname{Aut}(P)$. As is customary, we write $gp$ for the image of any $p\in P$ under the automorphism associated to $g\in G$. 

We define the {\em quotient} $P/G$ to be the set of all orbits of elements of $P$ with a binary relation $Gp\leq Gq$ if $gp\leq q$ for some $g\in G$.
\end{defi}

\begin{rem}\label{rem_qp}$\,$\begin{itemize}
\item[(i)]
The binary relation defined on $P/G$ is always reflexive and transitive, but in general it might fail to be antisymmetric. 

\item[(ii)] If $P/G$ is a poset then the canonical ``quotient'' map $P\to P/G$, $p\mapsto Gp$ is a well-defined order-preserving map.
\end{itemize}
\end{rem}

A {\em chain} in a poset $P$ is any totally ordered subset $X\subseteq P$. The length of a chain $X$ is the cardinal number $\vert X \vert - 1$. 
The {\em length} of the poset $P$ is the maximum length  of a chain in $P$. The length of $P$ is denoted $\ell(P)$ and in general is allowed to be infinite.
The poset is called {\em of finite length} if $\ell(P)<\infty$. 

\begin{lem}\label{lem:cf_qp}
Let $P$ be a finite-length poset. Then, for every action $G\circlearrowright P$ the set $P/G$ with the binary relation of Definition \ref{df:poset} is a partially ordered set.
\end{lem}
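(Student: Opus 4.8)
The plan is to verify only antisymmetry, since Remark~\ref{rem_qp}(i) already records that the binary relation on $P/G$ is reflexive and transitive. So I would start by assuming $Gp \le Gq$ and $Gq \le Gp$ in $P/G$ and aim to conclude $Gp = Gq$.

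Unwinding Definition~\ref{df:poset}, this assumption yields elements $g, h \in G$ with $gp \le q$ and $hq \le p$. Applying the (order-preserving) automorphism $g$ to the second inequality gives $ghq \le gp \le q$, so, setting $a := gh$, we obtain $aq \le q$. The key move is then to iterate: since $a \in \operatorname{Aut}(P)$ is order-preserving, applying it repeatedly produces a weakly decreasing sequence $q \ge aq \ge a^2 q \ge \cdots$, whose underlying set $\{a^i q : i \ge 0\}$ is a chain in $P$. Because $P$ has finite length, every chain has at most $\ell(P)+1$ elements, so this set is finite and there are $0 \le i < j$ with $a^i q = a^j q$. Applying $a^{-i}$ gives $q = a^{j-i} q$, and then the chain $q \ge aq \ge \cdots \ge a^{j-i} q = q$ must collapse by antisymmetry of $\le$ on $P$; in particular $aq = q$, i.e.\ $ghq = q$.

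To finish, I would feed this back into the original inequalities: from $gp \le q = ghq$ and the fact that $g^{-1}$ is order-preserving, we get $p \le hq$; combined with $hq \le p$ and antisymmetry of $\le$ on $P$, this forces $p = hq$, so $p$ and $q$ lie in the same $G$-orbit and $Gp = Gq$, as desired.

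The one step that requires genuine care — and the place where the finite-length hypothesis is used essentially — is the passage from $aq \le q$ to $aq = q$: one must notice that finite length bounds the size of the chain $\{a^i q\}$, extract a repetition $a^i q = a^j q$, and then ``go around the loop'' to upgrade $a^{j-i}q = q$ to $aq = q$. The rest is a routine unwinding of definitions, and in fact this argument illustrates why some finiteness assumption is needed at all, since without it the relation on $P/G$ can fail to be antisymmetric.
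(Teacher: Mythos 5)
Your proof is correct and uses essentially the same mechanism as the paper's: iterating the composite automorphism $gh$ on $q$ produces a chain whose descent must terminate because $P$ has finite length. The paper packages this as a contrapositive (if $Gp\neq Gq$ the inequalities are strict and the iterates form an infinite chain), whereas you argue directly via pigeonhole on $\{a^iq\}$ and then collapse the loop; both are fine.
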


\begin{proof}
By contraposition: as noted in Remark \ref{rem_qp}.(i), the only way in which $P/G$ can fail to be a poset is that there are $p,q\in P$ and $g,h\in G$ such that $gp \leq q$ and $hq \leq p$ but $Gp\neq Gq$. If $gp=q$ or $hq=p$ then $Gp=Gq$, thus it must be $gp < q$ and $hq < p$. But then, $... g^{-1}q > p > hq > hgp > hghq ...$ is an infinite chain in $P$. 
\end{proof}

Given a poset $P$ and an element $x\in P$ let $P_{\leq x}:=\{p\in P \mid p\leq x\}$ and consider it as a poset with the partial order induced from $P$. Given $A\subseteq P$ let $P_{\leq A}:=\bigcap_{a\in A}P_{\leq a}$ be the set of {\em lower bounds} of $A$. We define $P_{\geq x}$ and $P_{\geq A}$, the set of upper bounds, analogously. A {\em (lower) order ideal} (or {\em down-set}) of a poset $P$ is a subset $\mathfrak a \subseteq P$ such that $x\in \mathfrak a$ and $y\leq x$ implies $y\in \mathfrak a$. Examples of lower order ideals include subsets of the type $P_{\leq x}$, which we call {\em principal} lower order ideals (generated by $x$). {\em Upper order ideals}, resp.\ up-sets, are defined accordingly.

The {\em (closed) interval} between two elements $x,y\in P$ is the set $[x,y]:=P_{\geq x} \cap P_{\leq y}$ with the induced partial order. The corresponding {\em open interval} is $(x,y):=[x,y]\setminus \{x, y\}$. 

If a poset $P$ has a unique minimal element, this element is commonly denoted by $\hat 0$. Then $\{\hat 0\}= P_{\leq P}$ and we say that $P$ is {\em bounded below}. Analogously, $P$ is bounded above if it has a unique maximal element, usually denoted by $\hat 1$. 
We will often have to modify a poset by adding or removing extremal elements, and thus we introduce the following notation. 

\begin{notation}\label{not:posets}
Given a bounded-below poset $P$,
\begin{itemize}[parsep=0em,itemindent=-3em]
    \item[] $\check{P}:=P\setminus \{\hat0\}$ denotes the poset obtained by removing the minimal element;
    \item[] $\sz{P}$ denotes the poset obtained from by removing both $\hat 0$ and $\hat 1$ (if the latter exists);
    \item[] $\widehat{P}$ denotes the poset obtained from $P$ by adding a maximal element $\hat 1$.
\end{itemize}
 Moreover, $A(P)$ denotes the set of {\em atoms} of $P$, i.e., all $p\in P$ with $\hat 0 \lcover p$.
 \end{notation}

\begin{defi}\label{df_rank}
We call a bounded-below  poset {\em graded} if it possesses a {\em rank function}, i.e., a function $\rk\colon P\to \mathbb N$  such that $\rk(\hat 0)=0 $ and $\rk(y)=\rk(x)+1$ whenever $x\lcover y$. If such a rank function exists, then it is uniquely determined by $\rk(x)=\ell(P_{\leq x})$.

\end{defi}

To every graded, bounded-below poset $P$ of finite length $d$ is associated a {\em characteristic polynomial}

$$
\chi_{P}(t) := \sum_{x\in  P} \mu_P(\hat 0, x) t^{d-\rk(x)},
$$

where $\mu_P$ denotes the M\"obius function of $P$, see \cite[\S 3.7]{Sta}.

\begin{lem}\label{rm:spq} 
Let $P$ be a graded poset (possibly of infinite length). Then, for every action $G\circlearrowright P$ the set $P/G$ with the binary relation of Definition \ref{df:poset} is a partially ordered set.
\end{lem}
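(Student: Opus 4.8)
The plan is to follow the same contrapositive strategy as in the proof of Lemma \ref{lem:cf_qp}, but to replace the ``infinite descending chain'' argument (which used finite length) by an argument that exploits the rank function. The starting observation is that the rank function of a graded bounded-below poset is an intrinsic invariant: by Definition \ref{df_rank} it is uniquely determined by $\rk(x)=\ell(P_{\leq x})$. Consequently, every $g\in\operatorname{Aut}(P)$ preserves it, i.e., $\rk(gx)=\rk(x)$ for all $x\in P$ and all $g\in G$, since $g$ restricts to a poset isomorphism $P_{\leq x}\xrightarrow{\sim}P_{\leq gx}$ and isomorphic posets have the same length. I would also record the elementary fact that in a graded poset $x<y$ implies $\rk(x)<\rk(y)$ (any saturated chain from $x$ to $y$ has positive length and rank strictly increases along each cover).

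With these preliminaries in hand, suppose for contradiction that the binary relation on $P/G$ of Definition \ref{df:poset} fails to be antisymmetric. As noted in Remark \ref{rem_qp}.(i), this means there are $p,q\in P$ and $g,h\in G$ with $gp\leq q$ and $hq\leq p$ but $Gp\neq Gq$. If $gp=q$ then $Gp=Gq$, and similarly if $hq=p$; hence $gp<q$ and $hq<p$. Applying the rank function and using that it is $G$-invariant and strictly monotone, we get
\[
\rk(p)=\rk(gp)<\rk(q)=\rk(hq)<\rk(p),
\]
which is absurd. Therefore the relation is antisymmetric; since it is always reflexive and transitive (Remark \ref{rem_qp}.(i)), $P/G$ is a poset.

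I do not expect any genuine obstacle here: the only point requiring a moment's care is the invariance of the rank function under automorphisms, which is immediate from its intrinsic description. It is worth noting that this lemma and Lemma \ref{lem:cf_qp} overlap but neither subsumes the other, since a graded poset need not have finite length and a finite-length poset need not be graded; the present statement is exactly what is needed for the (possibly infinite) independence posets and geometric semilattices considered later.
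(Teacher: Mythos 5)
Your argument is correct and is essentially the paper's own proof: both rest on the observation that automorphisms of a graded poset preserve rank, so a failure of antisymmetry would produce strict inequalities forcing $\rk(p)<\rk(p)$ (the paper phrases this as $hgp<p$ having strictly smaller rank than $p$). Your added remarks on the intrinsic description of the rank function and strict monotonicity along $<$ just make explicit what the paper leaves implicit.
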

\begin{proof}
Automorphisms of graded posets preserve the rank of elements. If $P/G$ were not a poset, then, as in the proof of Lemma \ref{rem_qp}, we would find elements $p,q\in P$ and $g,h\in G$ with $ gp<q<h^{-1}p$, in particular $hgp<p$, implying that the rank of $hgp$ is strictly smaller than that of $p$ -- a contradiction. 
\end{proof}

\subsection{Translative actions}

We introduce a class of actions on posets that has been studied in \cite{DR} as a natural abstraction of the action induced by linear translations on the poset of intersections of a periodic hyperplane arrangement, whence the name (see Example \ref{rem:TAC} below). We refer to Section \ref{sec:AA} for a more precise discussion of this context.

\begin{defi}\label{def:pta}
An action $G\circlearrowright P$ is called {\em translative} if, for every $p\in P$ and $g\in G$, whenever the set $\{p,gp\}$ has an upper bound (i.e., if $P_{\geq\{p,gp\}}\neq \emptyset$) then $p=gp$.
\end{defi}

\begin{ex}\label{rem:TAC}
If $\mathscr A$ is any set of affine subspaces of a vector space and $G$ is a group of translations that permutes the elements of $\mathscr A$, then the induced action of $G$ on the poset $\mathcal C (\mathscr A)$ (defined as in Equation \eqref{eq:PLA}) is translative. In order to see this, consider an affine subspace $X\in \mathcal C(\mathscr A)$ and a translation $g\in G$ such that the set $\{X, gX\}$ is bounded above. This means that the intersection of the subspaces $X$ and $gX$ is nonempty -- but since translated subspaces are parallel, it must be $X=gX$.
\end{ex}

\begin{rem} \label{rem_ta}
If $G\circlearrowright P$ is a translative action, then 
\begin{itemize}
\item[(i)]
the intersection of any $G$-orbit $X\in P/G$ with any lower interval $P_{\leq p}$ consists of at most one element;
\item[(ii)] if $y\geq x$, then $\stab(y)\subseteq \stab(x)$ (since $g\in \stab(y)$ implies $x,gx\leq y$). 
\end{itemize}
\end{rem}

\begin{rem}\label{rem:scwol}
In particular, translative actions are related to actions on {\em scwols} in the sense of \cite[Chapter III.C, Definition 1.11]{BrHa} as follows. A translative action on a finite length poset $P$ induces an action on the {scwol} defined on the set $P$ by putting an arrow $x\to y$ whenever $x\leq y$.
\end{rem}

\def\quot{\varphi}
\begin{lem}\label{lem:low}
Let $G$ be a group acting translatively on a poset $P$ and suppose that $P/G$ is a poset. Then, for every $p\in P$ the restriction
$$
\quot_p: P_{\leq p} \to (P/G)_{\leq Gp}
$$
of the quotient map is an isomorphism of posets.
\end{lem}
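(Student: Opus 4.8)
The plan is to show that $\quot_p$ is a bijection and that both $\quot_p$ and its inverse are order-preserving. Surjectivity is immediate from the definition of the order on $P/G$: if $Gq \leq Gp$ in $P/G$, then by definition there is some $g \in G$ with $gq \leq p$, so $gq \in P_{\leq p}$ and $\quot_p(gq) = G(gq) = Gq$. Injectivity is exactly the content of Remark \ref{rem_ta}(i): if $q, q' \in P_{\leq p}$ lie in the same $G$-orbit, then since translativity forces a $G$-orbit to meet the lower interval $P_{\leq p}$ in at most one element, we must have $q = q'$. So $\quot_p$ is a bijection.

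Next I would check that the inverse is order-preserving, which is the only slightly delicate point. Order-preservation of $\quot_p$ itself is free (it is a restriction of the quotient map $P \to P/G$, which is order-preserving by Remark \ref{rem_qp}(ii), once we know $P/G$ is a poset — which is a hypothesis here). For the inverse: suppose $q, q' \in P_{\leq p}$ with $Gq \leq Gq'$ in $P/G$; I must produce $q \leq q'$ in $P$. By definition of the order on $P/G$ there is $g \in G$ with $gq \leq q'$. Then $gq \leq q' \leq p$, so $gq \in P_{\leq p}$; but $q \in P_{\leq p}$ as well, and $gq$, $q$ lie in the same orbit $Gq$, so by injectivity (i.e.\ Remark \ref{rem_ta}(i) again) $gq = q$. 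Hence $q = gq \leq q'$, as desired.

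I expect the main obstacle to be purely bookkeeping: making sure the logical role of the hypothesis ``$P/G$ is a poset'' is used correctly (it is what lets us speak of $(P/G)_{\leq Gp}$ as an honest lower interval and what makes the quotient map order-preserving), and being careful that the two appeals to injectivity above are genuinely the same statement as Remark \ref{rem_ta}(i) — the key recurring trick is that once two elements of a fixed lower interval $P_{\leq p}$ are in the same orbit, they coincide. No genuine difficulty arises beyond this; translativity does all the work through that single consequence.
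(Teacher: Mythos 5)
Your proof is correct and follows essentially the same route as the paper's: both arguments rest entirely on Remark \ref{rem_ta}(i), and your check that the inverse is order-preserving (pick $g$ with $gq\leq q'\leq p$, conclude $gq=q$) is exactly the paper's computation for its explicitly constructed inverse $X\mapsto X\cap P_{\leq p}$. The only difference is presentational — you prove bijectivity plus order-reflection instead of writing down the inverse map first — and nothing is missing.
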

\begin{proof} 
By Remark \ref{rem_qp} the function $\quot_p$ is well-defined and order-preserving. The definition of the ordering among orbits implies that every $X\leq Gp$ contains a representative $x\in X$, $x\leq p$. Then, Remark \ref{rem_ta} shows that the function $$\psi: (P/G)_{\leq Gp}\to P_{\leq p},\quad X\mapsto X\cap P_{\leq p}$$
is well-defined. To see that it is order-preserving consider $X\leq Y \leq Gp$ in $P/G$ and notice that $X=G\psi(X)$ and $Y=G\psi(Y)$. Then, $X\leq Y$ implies that there is $g\in G$ s.t. $g\psi(X)\leq \psi(Y)$. In particular $g\psi(X) \leq p$, and thus with Remark \ref{rem_ta} we have $g\psi(X)=\psi(X)$: we conclude $\psi(X)\leq \psi(Y)$ as required.

We are left with proving that $\psi$ and $\varphi_p$ are inverses. For every $X\in (P/G)_{\leq Gp}$ we have $\varphi_p\circ \psi (X) = G\psi(X) = X$, thus $\varphi_p\circ\psi = \operatorname{id}_{(P/G)_{\leq Gp}}$. Moreover, for every $q\leq p$ we have $Gq\cap P_{\leq p}\supseteq \{q\}$ and by Remark $\ref{rem_ta}$ this inclusion is an equality. Hence we compute $\psi\circ\varphi_p(q) = \psi(Gq) = q $. Thus, $\psi\circ\varphi_p = \operatorname{id}_{P_{\leq p}}$ as required.
 \end{proof}

\begin{lem}\label{lem:up}
Let $P$ be a poset, consider a translative action $G\circlearrowright P$ such that $P/G$ is a poset. Let $f\colon P \to P/G$ denote the quotient map as above. Then for every $X\in P/G$
\begin{itemize}
\item[(i)] ${\displaystyle f^{-1}((P/G)_{\geq X}) = \coprod_{x\in f^{-1}(X)} P_{\geq x}}$.
\end{itemize}
Moreover, for every $x\in P$ the following hold. \begin{itemize}
\item[(ii)] There is an isomorphism of posets
$$
(P/G)_{\geq Gx} \simeq P_{\geq x} / \operatorname{stab}(x)
$$
and the action $\stab(x)\circlearrowright P_{\geq x}$ is translative.
\item[(iii)] If $\stab(x)$ is normal in $G$ we can consider the group $H:=G/\stab(x)$. Then, $H$ acts translatively on $P_{\leq Gx}=f^{-1}((P/G)_{\leq Gx})$, and
$$
(P/G)_{\leq Gx} \simeq P_{\leq Gx} / H. 
$$
\end{itemize}
\end{lem}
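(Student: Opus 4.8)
The plan is to prove the three statements in order, using Lemma \ref{lem:low} and the remarks on translative actions as the main tools. For part (i), the set $f^{-1}((P/G)_{\geq X})$ consists of all $y \in P$ with $Gy \geq X$, i.e.\ all $y$ such that $gx_0 \leq y$ for some representative $x_0 \in f^{-1}(X)$ and some $g \in G$; equivalently $y \in P_{\geq x}$ for some $x \in f^{-1}(X)$ (replacing $x_0$ by $gx_0$). So the union on the right is clear; the point is disjointness. If $y \in P_{\geq x} \cap P_{\geq x'}$ with $x, x' \in f^{-1}(X)$, then $x$ and $x'$ lie in the same orbit $X$ and both are $\leq y$, so by Remark \ref{rem_ta}(i) the orbit $X$ meets $P_{\leq y}$ in at most one element, forcing $x = x'$. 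This gives the coproduct decomposition.

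For part (ii): the action of $\stab(x)$ on $P_{\geq x}$ is well-defined since $g \in \stab(x)$ and $y \geq x$ give $gy \geq gx = x$; it is translative because it is the restriction of a translative action (if $\{y, gy\}$ has an upper bound in $P_{\geq x}$, it has one in $P$, so $y = gy$). By Lemma \ref{rm:spq} or the hypothesis that $P/G$ is a poset, one checks $P_{\geq x}/\stab(x)$ is a poset. The isomorphism $(P/G)_{\geq Gx} \simeq P_{\geq x}/\stab(x)$ is induced by sending the $\stab(x)$-orbit of $y \in P_{\geq x}$ to $Gy$; this is well-defined and order-preserving (larger in $P_{\geq x}/\stab(x)$ forces larger in $P/G$). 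Surjectivity follows from part (i): every $Y \geq Gx$ has a representative $y \geq x$. For injectivity, suppose $y, y' \geq x$ with $Gy = Gy'$, say $gy = y'$; then $x = gx_0$ for some $x_0$... more carefully: $x \leq y$ and $x = g^{-1}x \cdot$—actually use $gy = y'$, so $gy \geq gx = x$ and $gy = y' \geq x$, hence $x, gx \leq y'$, and translativity gives $gx = x$, i.e.\ $g \in \stab(x)$, so $y, y'$ are in the same $\stab(x)$-orbit. Finally one must see this bijection is an isomorphism, i.e.\ reflects order: if $Gy \leq Gy'$ with $y, y' \geq x$, then $gy \leq y'$ for some $g \in G$; then $x \leq gy$ (since $gy \geq gx$, but we need $gx = x$) — here apply translativity again, $x \leq y'$ and $x = g x$? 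The clean argument: $x \leq y'$ and $gx \leq gy \leq y'$, so $x, gx \leq y'$, translativity forces $gx = x$, $g \in \stab(x)$, and then $gy \leq y'$ is a relation within $P_{\geq x}$ witnessing $\stab(x)y \leq \stab(x)y'$.

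For part (iii): when $\stab(x) \trianglelefteq G$, set $H = G/\stab(x)$. Since $\stab(x)$ fixes $x$, Remark \ref{rem_ta}(ii) shows $\stab(x) \subseteq \stab(z)$ for every $z \leq x$, hence $\stab(x)$ acts trivially on... no—rather, $\stab(x)$ acts on $P_{\leq Gx} = f^{-1}((P/G)_{\leq Gx})$ and we want the $G$-action there to descend to $H$. The key: for $z$ with $Gz \leq Gx$, i.e.\ $gz \leq x$ for some $g$, the stabilizer of any such $z$ contains... we need $\stab(x)$ to act trivially on $P_{\leq Gx}$. Take $z \in P_{\leq Gx}$, so WLOG (replacing $z$ in its $G$-orbit) $z \leq x$; then for $g \in \stab(x)$, $gz \leq gx = x$, so $z, gz \leq x$ and translativity gives $gz = z$. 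This shows $\stab(x)$ fixes every element of $P_{\leq x}$, and since $P_{\leq Gx}$ is the $G$-saturation of $P_{\leq x}$ and $\stab(x)$ is normal, $\stab(x)$ acts trivially on all of $P_{\leq Gx}$; hence the $G$-action on $P_{\leq Gx}$ factors through $H$. Translativity of $H \circlearrowright P_{\leq Gx}$ is inherited. Finally $P_{\leq Gx}/H = (P_{\leq Gx}/\stab(x))/H = P_{\leq Gx}/G$, and I claim $P_{\leq Gx}/G = (P/G)_{\leq Gx}$: indeed $P_{\leq Gx}/G$ is exactly the set of orbits $Gz$ with $Gz \leq Gx$, which is $(P/G)_{\leq Gx}$ as a set, and the orders agree because the quotient $P_{\leq Gx} \to P_{\leq Gx}/G$ is the restriction of $f$ and the order on $(P/G)_{\leq Gx}$ is induced from $P/G$.

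The main obstacle I expect is the careful bookkeeping in part (ii): showing the natural map $P_{\geq x}/\stab(x) \to (P/G)_{\geq Gx}$ is an \emph{iso} of posets — in particular that it reflects the order relation — requires invoking translativity (via Remark \ref{rem_ta}) at precisely the right moments to "align" a chosen representative with $x$, and the same care is needed for the triviality-of-$\stab(x)$-action claim underlying part (iii). The set-level bijections are routine; it is the order-reflection and the well-definedness-of-the-$H$-action that demand attention.
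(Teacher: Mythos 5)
Your proposal is correct and follows essentially the same route as the paper's proof: part (i) from Remark \ref{rem_ta}, and in (ii)--(iii) the same natural maps, with translativity invoked exactly as in the paper to force $gx=x$ and thereby align representatives (the paper phrases (ii) as a pair of mutually inverse order-preserving maps, which is equivalent to your bijective, order-preserving and order-reflecting map). Two small remarks: the appeal to Lemma \ref{rm:spq} is not available here since $P$ is not assumed graded, but your fallback (antisymmetry transferred from the hypothesis that $P/G$ is a poset) does the job; and your explicit verification, via normality, that $\stab(x)$ acts trivially on $P_{\leq Gx}$ so that the $G$-action descends to $H$ is a worthwhile detail that the paper leaves as an implicit ``straightforward check.''
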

\begin{proof}
Part (i) follows immediately by translativity (see Remark \ref{rem_ta}). 
For part (ii) write $X=Gx$ and compare the definitions:
$$P_{\geq x}/{\stab (x)} = \{\stab(x)y \mid y \geq_P x\}
,\quad 
(P/G)_{\geq X} = \{ Gy \mid y\geq_{P}gx \textrm{ for some }g\}$$

Now consider the map

$$P_{\geq x}/{\stab (x)} \to (P/G)_{\geq X},\quad\quad\stab(x)y\mapsto Gy.$$ It is clearly well-defined and order-preserving. We will provide an order-preserving inverse. Consider $Y\in (P/G)_{\geq X}$. By definition, there is 
$y\in Y$ such that $y\geq_{P} x$. By part (i) this $y$ is unique up to the action of $\operatorname{stab}(x)$. Thus the function
 $$(P/G)_{\geq X} \to P_{\geq x}/{\stab (x)},\quad\quad Y\mapsto \stab(x)y$$ is well-defined.
 A straightforward check proves that this function is also order-preserving and it is indeed inverse to the previous. Translativity of $\stab(x)\circlearrowright P_{\geq x}$ is also easily verified.

Let us now consider part (iii). By definition of the quotient poset, every $Y\in (P/G)_{\leq Gx}$ is of the form $Y=Gy_Y$ for some $y_Y\leq x$. Translativity of $G\circlearrowright P$ implies uniqueness of such a $y_Y$, thus we have defined an order-preserving map  $\varphi: (P/G)_{\leq Gx} \to P_{\leq Gx}/H$, $Y\mapsto H y_Y$. A straightforward check shows that the obvious order-preserving map $\psi: P_{\leq Gx}/H \to (P/G)_{\leq Gx}$, $Hy \mapsto Gy$ is inverse to $\varphi$. 
An easy check of the definition verifies the translativity claim and concludes the proof.
\end{proof}

We conclude with a remark that will allow us to "split" actions of abelian groups.
\todo{Any G, H normal?}
\begin{lem}\label{lem:quac}
Let $G$ be an abelian group acting on a ranked poset $P$. Then, for every subgroup $H$
\begin{itemize}
\item[(a)] $G/H$ acts on $P/H$, and $(P/H)/(G/H)= P/G$;
\item[(b)] If the action $G\circlearrowright P$ is translative, then so is the action of $G/H$ on $P/H$.
\end{itemize}
\end{lem}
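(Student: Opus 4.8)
Since $G$ is abelian, $H$ is normal in $G$, so $G/H$ is a group; the plan is to prove (a) and (b) by unwinding Definition~\ref{df:poset} (twice, for (a)) while keeping careful track of which intermediate objects are genuinely posets. Note first that, as $P$ is graded (Definition~\ref{df_rank}), Lemma~\ref{rm:spq} guarantees that $P/H$ and $P/G$ are both posets; in particular $\operatorname{Aut}(P/H)$ makes sense.

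\textbf{The action in (a).} I would define $gH\cdot Hp:=H(gp)$ and check it is well defined in both arguments: if $p'=hp$ and $g'=g\tilde h$ with $h,\tilde h\in H$, then $g'p'=g\tilde h h p=(\tilde h h)(gp)$ by commutativity of $G$, and $\tilde h h\in H$, so $H(g'p')=H(gp)$. A routine check then shows that each $gH$ acts as an order-preserving bijection of $P/H$ with inverse $g^{-1}H$, and that $gH\mapsto\bigl(Hp\mapsto H(gp)\bigr)$ is a group homomorphism $G/H\to\operatorname{Aut}(P/H)$. This is the one point at which commutativity of $G$ is genuinely used (it could be relaxed to $H\trianglelefteq G$, which we will not need).

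\textbf{The identification in (a).} The $(G/H)$-orbit of $Hp$ inside $P/H$ is the family $\{H(gp)\mid g\in G\}$ of $H$-orbits, whose union is exactly $Gp$; hence $(G/H)(Hp)\mapsto Gp$ is a well-defined bijection $(P/H)/(G/H)\to P/G$. To see it respects the order relations, I would unwind both sides: $(G/H)(Hp)\le(G/H)(Hq)$ holds iff $H(gp)\le Hq$ in $P/H$ for some $g\in G$, iff $hgp\le q$ for some $g\in G$, $h\in H$, iff $g'p\le q$ for some $g'\in G$, iff $Gp\le Gq$. Since $P/G$ is already a poset, this simultaneously shows $(P/H)/(G/H)$ is a poset and that the bijection above is an isomorphism of posets, giving (a).

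\textbf{Translativity (b), and the main point to watch.} Assume $G\circlearrowright P$ is translative; the restriction of a translative action to a subgroup $H$ is again translative, so the construction above applies. Suppose $Hp\in P/H$ and $gH\in G/H$ are such that $\{Hp,\,H(gp)\}$ has an upper bound $Hq$ in $P/H$. Unwinding gives $h_1,h_2\in H$ with $h_1p\le q$ and $h_2gp\le q$. Put $p':=h_1p$ and $g':=h_2gh_1^{-1}\in G$; then $p'\le q$ and $g'p'=h_2gp\le q$, so $\{p',g'p'\}$ is bounded above in $P$, and translativity of $G\circlearrowright P$ forces $p'=g'p'$, i.e.\ $h_1p=h_2gp$. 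Applying the quotient map $P\to P/H$ yields $Hp=H(gp)=gH\cdot Hp$, proving translativity of $G/H\circlearrowright P/H$. There is no analytic difficulty in any of this; the only thing that requires care is the bookkeeping in the identification step — the order of $(P/H)/(G/H)$ conceals a double existential (first over $G/H$, then over $H$) that must be collapsed to the single existential defining $P/G$ — together with the discipline of invoking Lemma~\ref{rm:spq} for $P/H$ and $P/G$ before forming $\operatorname{Aut}(P/H)$ or asserting antisymmetry of $(P/H)/(G/H)$.
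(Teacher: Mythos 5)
Your proposal is correct and follows essentially the same route as the paper: define the induced $G/H$-action using commutativity, identify the $(G/H)$-orbit of $Hp$ with $Gp$ to get $(P/H)/(G/H)=P/G$, and for (b) unwind the upper bound in $P/H$ to representatives in $P$ and apply translativity of $G\circlearrowright P$ (your conjugation $g'=h_2gh_1^{-1}$ is just a different bookkeeping of the paper's translation by $-h$). Your treatment is in fact somewhat more explicit than the paper's, which leaves the well-definedness of the action and the order-compatibility of the identification as easy checks.
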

\begin{proof}$\,$
\begin{itemize}
\item[(a)] That $G/H$ acts on $P/H$ is easy to check, using abelianity of $G$. 
Moreover, the orbit of any $Hp\in P/H$ under $G/H$ is $\{(g+H)\cdot hp\mid p\in P, h\in H, g\in G \}=Gp$ equals the orbit of $p$ under $G$. 
\item[(b)] If $Hy \geq (g+H) Hx $ and $Hy \geq Hx$ for some $x,y\in P$ and some $g\in G$, then we can choose elements $h,h'\in H$ with $hx\leq y$ and $(h'+g)x\leq y$, and both $x$ and $(h'-h+g)x$ are below $(-h)y$ in $P$, whence $(h'-h+g)x=x$ by translativity of $G\circlearrowright P$ and thus $(g+H) Hx = Hx$, proving translativity of the action of $G/H$.
\end{itemize}

\end{proof}

\subsection{Refined actions}\label{df:refined}

Let $P$ be a graded poset of (finite) length $d$, and let $G$ be a finitely generated free Abelian group.
Suppose that $G$ acts on $P$ so that there is some $k\in \mathbb N$ satisfying
\begin{center}
\stella{} for all $x\in P$,  $\stab (x)$ is a direct summand of $G$ of rank $k(d-\rk(x))$,
\end{center}
where $\rk$ is the poset's rank function (see Definition \ref{df_rank}).

\begin{rem}\label{gr_ref}$\,$
\begin{itemize}
\item[(i)]
Since $G$ is a finitely generated free Abelian group, the condition for a subgroup $H$ of $G$ to be a direct summand of $G$ is equivalent to $H$ being a {\em pure} subgroup, meaning that $G/H$ has no torsion elements (equivalently, $nh \in H$ implies $h\in H$ for every $h\in G$ and every $n>0$). See \cite[\S 16A]{CurtisReiner}.

\item[(ii)] For every $x\in P$ of maximal rank, \stella{} implies $\stab(x)=\{0\}$. Moreover,
\stella{} implies also that $G\simeq \mathbb Z^{kd}$.
\end{itemize}
\end{rem}

\begin{defi}[Refined actions] 
We call a group action on a graded poset $P$ {\em refined} if it is translative and satisfies \stella{} for some $k\in \mathbb N$. If we wish to specify the number $k$, we will call the action {\em $k$-refined}.
\end{defi}

\begin{lem}\label{lem:sg}$\,$ 
Suppose that the action of $G$ on $P$ is $k$-refined for some $k\in\mathbb N$.
\begin{itemize}
\item[(i)] For every $x\in P$ the action of the group $\stab(x)$ on $P_{\geq x}$ is $k$-refined.
\item[(ii)]  For every $x\in P$ the action of the group $G/\stab(x)$ on $P_{\leq Gx}$ is $k$-refined.
\end{itemize}
\end{lem}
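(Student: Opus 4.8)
The plan is to bootstrap off the structural isomorphisms already established in Lemma \ref{lem:up}, which identify the relevant posets and actions, and then verify that condition \stella{} is inherited, tracking ranks and purity of stabilizers carefully. First I would handle part (i). By Lemma \ref{lem:up}(ii), the action of $\stab(x)$ on $P_{\geq x}$ is translative, so only \stella{} needs checking. Fix $y \in P_{\geq x}$. The stabilizer of $y$ for this restricted action is $\stab(x) \cap \stab(y)$, but by Remark \ref{rem_ta}(ii) we have $\stab(y) \subseteq \stab(x)$ (since $y \geq x$), so this is just $\stab(y)$. Now $\stab(y)$ is a direct summand of $G$ of rank $k(d - \rk(y))$ by \stella{} for the original action; I need it to be a direct summand of $\stab(x)$ of rank $k'(\ell(P_{\geq x}) - \rk_{P_{\geq x}}(y))$ for a suitable constant $k'$. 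Since $P$ is graded and $x \leq y$, the rank function on $P_{\geq x}$ satisfies $\rk_{P_{\geq x}}(y) = \rk(y) - \rk(x)$, and $\ell(P_{\geq x}) = d - \rk(x)$ (using that $P$ has length $d$ and is graded, so there is a maximal chain from $x$ to a top element), hence $\ell(P_{\geq x}) - \rk_{P_{\geq x}}(y) = (d - \rk(x)) - (\rk(y) - \rk(x)) = d - \rk(y)$. So the desired rank is $k(d-\rk(y))$ with the \emph{same} $k$ — good. For the direct-summand claim, I use Remark \ref{gr_ref}(i): $\stab(y)$ pure in $G$ and $\stab(y) \subseteq \stab(x) \subseteq G$ immediately gives $\stab(y)$ pure in $\stab(x)$, and $\stab(x)$ is itself free Abelian and finitely generated, so purity is equivalent to being a direct summand. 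This establishes (i), with the action being $k$-refined (same $k$).

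For part (ii), by Lemma \ref{lem:up}(iii) — applicable because $G$ is Abelian, so $\stab(x)$ is normal — the group $H := G/\stab(x)$ acts translatively on $P_{\leq Gx} = f^{-1}((P/G)_{\leq Gx})$, and $(P/G)_{\leq Gx} \simeq P_{\leq Gx}/H$. Again only \stella{} for the $H$-action on $P_{\leq Gx}$ remains. First note $H = G/\stab(x)$ is finitely generated free Abelian: $\stab(x)$ is a direct summand of $G$ by \stella{}, so $G \cong \stab(x) \oplus H$ and $H$ is free of rank $kd - k(d - \rk(x)) = k\rk(x)$. Now fix $z \in P_{\leq Gx}$; replacing $z$ by a translate (which does not change its $H$-stabilizer up to conjugation, hence not at all since $H$ is Abelian) I may assume $z \leq x$. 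The $H$-stabilizer of $z$ is the image of $\{g \in G : gz \in \stab(x)z\}$; since the action is translative and $z \leq x$, one checks $gz \leq x$ forces $gz = z$ by Remark \ref{rem_ta}, so in fact the set of $g$ with $gz$ in the $\stab(x)$-orbit of $z$ and simultaneously $\leq x$ collapses, and the stabilizer of $Hz$ in $H$ is $\stab(z)/(\stab(z)\cap\stab(x)) = \stab(z)/\stab(z)$... more carefully: $\stab_H(Hz) = \{g\stab(x) : \exists\, h \in \stab(x),\ hgz = z\}$; since $z \leq x$ and $hgz \leq x$ would be forced if $hg$ stabilized something below $x$... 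The cleanest route is to observe $\stab_H(Hz)$ is the image of $\stab(z)$ under $G \to H$ (this needs: if $g z$ lies in the $\stab(x)$-orbit of $z$, write $gz = hz$ with $h \in \stab(x)$, then $h^{-1}g \in \stab(z)$, so $g \in \stab(z)\cdot\stab(x)$, giving $g\stab(x) \in \mathrm{image}(\stab(z))$). So $\stab_H(Hz) \cong \stab(z)/(\stab(z)\cap\stab(x)) = \stab(z)/\stab(x\wedge\text{stuff})$; since $z\le x$ gives $\stab(x)\subseteq\stab(z)$ by Remark \ref{rem_ta}(ii), we get $\stab_H(Hz) \cong \stab(z)/\stab(x)$, which has rank $k(d-\rk(z)) - k(d-\rk(x)) = k(\rk(x) - \rk(z))$. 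Since $P_{\leq Gx} \cong P_{\leq x}$ as posets via Lemma \ref{lem:low} (the orbit $Gx$ meets $P_{\leq p}$ in one point, and more to the point $(P/G)_{\leq Gx} \cong P_{\leq x}$ combined with $P_{\leq Gx}/H \cong (P/G)_{\leq Gx}$), the relevant poset $P_{\leq Gx}$ is graded of length $\rk(x)$ with $\rk_{P_{\leq Gx}}(z) = \rk(z)$; thus $k(\rk(x)-\rk(z)) = k(\ell(P_{\leq Gx}) - \rk(z))$, which is exactly the form required by \stella{} with the same constant $k$. Finally, purity: $\stab(z)/\stab(x)$ sits inside $H = G/\stab(x)$, and it is pure there because $\stab(z)$ is pure in $G$ (if $n\cdot(g\stab(x)) \in \stab(z)/\stab(x)$ then $ng \in \stab(z) + \stab(x) = \stab(z)$, so $g \in \stab(z)$ by purity). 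Hence the $H$-action on $P_{\leq Gx}$ is $k$-refined.

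\textbf{Main obstacle.} The routine parts (translativity, rank arithmetic) are genuinely routine given Lemma \ref{lem:up}. The one genuinely delicate point is the identification of stabilizers in the \emph{quotient} group $H = G/\stab(x)$ for part (ii): one must show $\stab_H(Hz)$ equals the image of $\stab(z)$, which requires using translativity to rule out that some $g \notin \stab(z)\cdot\stab(x)$ nonetheless sends $z$ into its $\stab(x)$-orbit. This is where I would be most careful, choosing the representative $z \leq x$ and invoking Remark \ref{rem_ta}(i) (an orbit meets a principal lower ideal in at most one point) to pin down the representative and force the collapse. Everything else is bookkeeping with ranks of free Abelian groups and the purity $\Leftrightarrow$ direct-summand equivalence from Remark \ref{gr_ref}(i).
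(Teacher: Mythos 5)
Your proposal is correct and follows essentially the same route as the paper: translativity is imported from Lemma \ref{lem:up}, the restricted stabilizers are identified as $\stab(y)$ in case (i) and as $\stab(z)/\stab(x)$ in case (ii) (the paper's equation $(\dagger)$), and \stella{} is then checked by the same rank arithmetic together with the purity/direct-summand equivalence of Remark \ref{gr_ref}(i). The only differences are cosmetic — your purity argument in (i) uses the divisibility characterization directly instead of the paper's torsion-in-a-direct-sum argument, and in (ii) you normalize $z\leq x$ by translating rather than invoking $\stab(x)=\stab(gx)\subseteq\stab(y)$ — so nothing further is needed.
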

\begin{proof} By Lemma \ref{lem:up} we immediately know that both actions $\stab(x) \circlearrowright P_{\geq x}$ and $G/\stab(x) \circlearrowright P_{\leq Gx}$ are translative. Thus we only have to check condition \stella.

We start with (i). First, notice that $P_{\geq x}$ is ranked of rank $d':=d-\rk(x)$. Call $\rk_{\geq x}$ the rank function of $P_{\geq x}$.
Call $G':= \stab(x)$ and consider $y\in P_{\geq x}$. By Remark \ref{rem_ta}.(ii), translativity of the action implies that $\stab_G(y) \subseteq G'$. Hence $\stab_{G'}(y)=\stab_{G}(y)$ and, by assumption, this group has rank $k(d-\rk(y))=k(d-\rk_{\geq x}(y)-\rk(x)) = k(d'-\rk_{\geq x}(y))$ as required. Moreover, recall that $G'$ is a direct summand of $G$, say $G=G'\oplus H$. In particular,  $G'$ is free Abelian.
 Finally, every torsion element in $G'/\stab_{G'}(y) =\stab_{G}(x)/\stab_{G}(y)$ is a torsion element in $G/\stab_{G}(y)=(G'\oplus H) /\stab_{G}(y) = G'/\stab_{G'}(y) \oplus H. $ Since by assumption $\stab_{G}(y)$ is pure in $G$, we conclude that $\stab_{G'}(y)$ is pure in $G'$.

Now let us turn to (ii). Notice that $P_{\leq Gx}$ is ranked of length $d'':=\rk(x)$, and the rank function $\rk_{\leq Gx}$ is the restriction of the rank function $\rk$ of $P$. 
Since the original action satisfies \stella{} we can write $G=H\oplus \stab(x)$ for some subgroup $H$, so $G/\stab(x)\simeq H$ is free Abelian.
Now fix $y\in P_{\leq Gx}$ and consider $\stab_H(y)$. By definition there is $g\in G$ with $gx\geq y$. Hence, by commutativity of $G$ and with Remark \ref{rem_ta}.(ii), $\stab_G(x)=\stab_G(gx) \subseteq \stab_G(y)$. Thus
\begin{equation}\tag{$\dagger$}\label{eq:quotH}
\stab_H(y) \simeq \stab_G(y)/\stab_G(x).
\end{equation} 
From this  we can prove purity of $\stab_H(y)$  as a subgroup of $H$ by writing 
 \begin{equation*}\label{eq:isolem}
 H/\stab_H(y) \simeq (G/\stab_G(x)) / (\stab_G(y)/\stab_G(x)) \simeq G/\stab_G(y)
\end{equation*}
 and noticing the latter group is torsion-free by assumption. 
 Moreover, using Equation \eqref{eq:quotH} and property \stella{} for $G\circlearrowright P$ we can compute the rank of $\stab_H(y)$ to be $(d-\rk(y) ) - (d-\rk(x)) = \rk(x)-\rk(y) = d'' - \rk_{\leq Gx}(y)$, as required.

\end{proof}

\section{Simplicial structures}\label{sec_sicosipo}

\subsection{Simplicial posets} \label{sec:simplicial_posets}

\begin{defi}[Compare {\cite{Sta}}]\label{df_simplicialposet}
A finite-length, countable poset $P$ is called {\em simplicial} if it has a unique minimal element, and for all $p\in P$ the lower interval $P_{\leq p}$ is a Boolean algebra. 
\end{defi}

\begin{figure}[h]
\centering
\begin{tikzpicture}
\node (empty) at (0,0) {$\varnothing$};
\node (a) at (-1.5, 1) {$a$};
\node (b) at (0,1) {$b$};
\node (c) at (1.5, 1) {$c$};
\node (l1) at (-1.5, 2) {${\ell}_1$};
\node (l3) at (0, 2) {${\ell}_3$};
\node (l2) at (1.5, 2) {${\ell}_2$};
\node (l4) at (3, 2) {${\ell}_4$};
\node (T1) at (-0.75, 3) {$T_1$};
\node (T2) at (0.75, 3) {$T_2$};
\draw (empty) -- (a) -- (l1) -- (T1);
\draw (a) -- (l3) -- (T2);
\draw (empty) -- (b) -- (l4);
\draw (empty) -- (c) -- (l4);
\draw (l2) -- (T2);
\draw[preaction={draw=white, -,line width=6pt}] (T1) -- (l2);
\draw[preaction={draw=white, -,line width=6pt}] (l1) -- (T2);
\draw[preaction={draw=white, -,line width=6pt}] (l3) -- (T1);
\draw[preaction={draw=white, -,line width=6pt}] (l3) -- (c) -- (l2);
\draw[preaction={draw=white, -,line width=6pt}] (l1) -- (b) -- (l2);
\end{tikzpicture}
\caption{A finite simplicial poset $P$ that is not the poset of faces of a simplicial complex.}
\label{simplicialposet_example}
\end{figure}
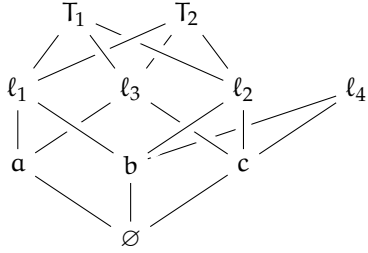

\begin{rem}
Every simplicial poset is graded in the sense of Definition \ref{df_rank}. In particular, if $P$ is a simplicial poset and $p\in P$, then $\rk(p)$ equals the number of elements of the Boolean algebra $P_{\leq p}$, i.e., $P_{\leq p} $ is isomorphic to $ B_{\rk(p)}$ (cf.~Example \ref{boolean}). 
\end{rem}

\begin{defi}\label{df:fh}

We also recall from \cite{StaSP} the definition of the $f$-vector of a finite simplicial poset $P$ of length $d$, 
$$
f(P):= (f_{-1}(P),\ldots, f_{d-1}(P)),\quad \textrm{ where }f_i(P):=\vert\{x\in P \mid \operatorname{rk}(x)=i+1\}\vert, 
$$
and of the associated $h$-polynomial
$$
h_P(t):= t^d \sum_{i=0}^{d} f_{i-1}(P)\left(\frac{1-t}{t}\right)^{d-i},
$$
where it is customary to set $f_{-1}(P) = 1$ for every $P$.

\end{defi}

\begin{rem}\label{rem:chih}
The $h$- and the characteristic polynomial of a simplicial poset $P$ are related as follows:
$$
\chi_P(t) = \sum_{i=0}^d f_{i-1}(P) (-1)^i t^{d-i} = (-1)^d h_P\left(\frac{1}{1-t}\right).
$$
\end{rem}

The following lemma expresses the fact that focussing on simplicial posets is natural when considering translative group actions.

\begin{lem} \label{lem:simplicialquotient}
Let $G$ be a group acting on a simplicial poset $P$. Then $P/G$ is a simplicial poset if and only if the action is translative.
\end{lem}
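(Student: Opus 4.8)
The plan is to prove the biconditional in two directions, with the interesting content being the "only if" implication.

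\textbf{The ``if'' direction.} Assume the action is translative. By Lemma~\ref{lem:cf_qp} (or Lemma~\ref{rm:spq}, since simplicial posets are graded), $P/G$ is already known to be a genuine poset, and it is finite-length and countable because $P$ is. It has a unique minimal element, namely the orbit $G\hat 0 = \{\hat 0\}$ of the bottom element of $P$. The remaining point is to check that every lower interval $(P/G)_{\leq Gp}$ is a Boolean algebra. But this is exactly the content of Lemma~\ref{lem:low}: translativity gives an isomorphism of posets $\varphi_p\colon P_{\leq p}\xrightarrow{\ \sim\ }(P/G)_{\leq Gp}$, and $P_{\leq p}$ is a Boolean algebra because $P$ is simplicial. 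Hence $(P/G)_{\leq Gp}$ is a Boolean algebra, so $P/G$ is simplicial.

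\textbf{The ``only if'' direction.} Assume $P/G$ is a simplicial poset; we must show the action is translative. Suppose not: then there exist $p\in P$ and $g\in G$ with $p\neq gp$ but with an upper bound $q\in P_{\geq\{p,gp\}}$. The idea is to derive a contradiction with the Boolean (hence ranked, and ``square-free'') structure of the lower interval $(P/G)_{\leq Gq}$. Consider the quotient map $f\colon P\to P/G$ and its restriction to $P_{\leq q}$. Both $p$ and $gp$ lie in $P_{\leq q}$ and map to the \emph{same} orbit $Gp = Gq p$ in $P/G$, so $f|_{P_{\leq q}}$ is not injective. On the other hand, $P_{\leq q}$ is a Boolean algebra $B_{\rk(q)}$, and its image lies in $(P/G)_{\leq Gq}$, which by hypothesis is also a Boolean algebra; the image of $f|_{P_{\leq q}}$ is a lower order ideal of $(P/G)_{\leq Gq}$ (images of down-sets under the quotient map are down-sets) containing the top element $Gq$, hence all of $(P/G)_{\leq Gq}$. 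So $f|_{P_{\leq q}}$ is a surjective rank-preserving map between two Boolean algebras of the same rank $\rk(q)$ — and such a map must be a bijection (e.g. count atoms, or use that a surjective order-preserving map between finite posets of the same cardinality is an isomorphism), contradicting non-injectivity. Therefore the action is translative.

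\textbf{Anticipated main obstacle.} The ``if'' direction is essentially a citation of Lemma~\ref{lem:low} plus bookkeeping, so it is routine. The delicate point in the ``only if'' direction is making precise the claim that $f|_{P_{\leq q}}$ lands in $(P/G)_{\leq Gq}$ surjectively and rank-preservingly, and then that this forces bijectivity: one needs that $\rk$ is preserved (automorphisms preserve rank, and $P/G$ is graded so $\rk(Gx)=\ell((P/G)_{\leq Gx})=\rk(x)$ by Lemma~\ref{lem:low} applied in the translative case — but here we cannot yet invoke translativity, so instead I would argue directly that $f$ restricted to $P_{\leq q}$ cannot collapse a cover relation, since covers in $P_{\leq q}$ and covers in the image Boolean algebra both increase cardinality of the principal ideal by the expected amount, forcing the fibres over each rank level to have equal size, hence size one). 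Alternatively — and perhaps more cleanly — one can avoid rank arguments entirely: if $f|_{P_{\leq q}}$ is surjective onto a Boolean algebra of rank $\rk(q)$ but $P_{\leq q}$ itself is Boolean of rank $\rk(q)$, equality of cardinalities $2^{\rk(q)}=2^{\rk(q)}$ forces the surjection to be a bijection. I expect the write-up to hinge on choosing whichever of these two phrasings is shortest, and on carefully justifying that the image is the \emph{entire} interval below $Gq$.
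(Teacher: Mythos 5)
Your proposal is correct and follows essentially the same route as the paper: the ``if'' direction is Lemma~\ref{rm:spq} plus Lemma~\ref{lem:low}, and the ``only if'' direction is the same cardinality argument, namely that non-translativity makes the surjection $P_{\leq q}\to (P/G)_{\leq Gq}$ between two Boolean algebras of equal size non-injective. The rank-equality point you flag as the delicate step is disposed of in the paper exactly as in your parenthetical remark: automorphisms preserve rank, so $\rk_{P/G}(Gq)=\rk_P(q)$ and both intervals have cardinality $2^{\rk(q)}$.
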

\begin{proof}
That quotients of simplicial posets by translative actions are simplicial is an immediate consequence of Lemma \ref{rm:spq} and Lemma \ref{lem:low}.

For the reverse implication, consider a group $G$ acting by automorphisms on a poset $P$ and suppose that $P/G$ is a simplicial poset.   Given  $y\in P$, since automorphisms preserve poset rank, we know that the rank of $y$ in $P$ equals the rank of $Gy$  in $P/G$. Simpliciality of $P$ and $P/G$ then implies that $\vert P_{\leq y}\vert = \vert (P/G)_{\leq Gy} \vert$.

 By way of contradiction suppose now that the action is not translative. This means that we can choose $y$ so that there are $x \in P$, $g\in G$ with $x<y$, $gx<y$ and $gx\neq x$.  
In particular, the quotient map $P_{\leq y} \to P/G_{\leq Gy}$ is not injective. Since this map is surjective by definition, we conclude $\vert P_{\leq y}\vert > \vert (P/G)_{\leq Gy} \vert$ -- a contradiction.
\end{proof}

\subsection{Simplicial complexes}
 Let $V$ be a set. An abstract simplicial complex on the vertex set $V$ is a family $\Sigma$ of finite subsets of $V$ that is closed under taking subsets (i.e., $\sigma \in \Sigma$ and $\tau\subseteq \sigma$ implies $\tau\in \Sigma$). We will assume that every one-element subset of $V$ is in $\Sigma$.

\begin{ex}
To every finite-length poset $P$ is canonically associated a simplicial complex $\Delta(P)$, called the {\em order complex} of $P$. The set of vertices of $\Delta(P)$ is the set $P$ and a subset of $P$ defines a simplex if and only if it is totally ordered.
\end{ex}

Elements of $\Sigma$ are called {\em faces} (or {\em simplices}), and every face $\sigma\in \Sigma$ has a dimension $\dim\sigma := \vert \sigma \vert -1$. The dimension of $\Sigma$ is then the maximum of the dimensions of its faces: this can be an infinite cardinal, and we call $\Sigma$ finite-dimensional if its dimension is finite. It is customary to call $\Sigma$ {\em pure} (or pure-dimensional) if all maximal faces of $\Sigma$ have the same dimension.

The set $\Sigma$ partially ordered by inclusion is a simplicial poset $P_\Sigma$, ranked by cardinality of its elements. The atoms of $P_\Sigma$ correspond to the single-element subsets of $V$. Every action $G\circlearrowright P_\Sigma$ induces an action on $V$ and, vice-versa, the action on the whole poset is determined by the action on the vertices.

\begin{rem}\label{rem_traeq} Given a translative action $G\circlearrowright P_\Sigma$, for every $x\in P_\Sigma$ we have $\stab(x)=\bigcap_{v\in x}\stab(v)$. (In fact, translativity implies $\stab(x)\subseteq \stab(v)$ for every $v\in x$, see Remark \ref{rem_ta}.(ii). Since $\Sigma$ is a simplicial complex, every $g\in\bigcap_{v\in x}\stab(v) $ fixes $x$ because it fixes all its vertices.)

\end{rem}

In particular, from Remark \ref{rem_traeq} we conclude that for every translative action $G\circlearrowright P_\Sigma$ the associated action on $\Sigma$ satisfies Bredon's condition (A), see \cite[\S III.1]{Bredon}. 

\begin{rem}
Let $G\circlearrowright \Sigma$ be a group action on a simplicial complex. In general, the set of orbits of simplices does not have the structure of a simplicial complex, but only a natural partial order  $P_{\Sigma}/G$. If this partially ordered set is in fact of the form $P_{\Sigma'}$ for some simplicial complex $\Sigma'$, then we call $\Sigma'$ the quotient of $\Sigma$ and write $\Sigma/G$.
\end{rem}

\begin{lem}\label{lem_bredon}
Let $G\circlearrowright P$ be a translative group action on a finite-length simplicial poset. Then the induced action on $\Delta(P)$ is {\em regular} in the sense of Bredon \cite[\S III.1]{Bredon}. In particular, there is an isomorphism of simplicial complexes $\Delta(P)/G \simeq \Delta(P/G)$.
\end{lem}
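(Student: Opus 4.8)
Recall that a simplex of $\Delta(P)$ is a finite chain of $P$; since $P$ has finite length, $\Delta(P)$ is finite-dimensional and $P/G$ is a poset by Lemma~\ref{lem:cf_qp}. The plan is to verify that the induced simplicial action of $G$ on $\Delta(P)$ satisfies Bredon's condition (A) together with the regularity condition of \cite[\S III.1]{Bredon}, and then to exhibit the isomorphism $\Delta(P)/G\simeq\Delta(P/G)$ explicitly via the quotient map $P\to P/G$.

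The first observation is that the induced action of $G$ on the face poset $P_{\Delta(P)}$ is again translative: if a chain $\sigma$ and its image $g\sigma$ have a common upper bound, i.e.\ if $\sigma\cup g\sigma$ is a chain, then for every $x\in\sigma$ the elements $x$ and $gx$ are comparable, so $\{x,gx\}$ is bounded above and translativity of $G\circlearrowright P$ forces $x=gx$; hence $g\sigma=\sigma$. Via Lemma~\ref{lem:simplicialquotient} this already shows that $P_{\Delta(P)}/G$ is a simplicial poset, so that $\Delta(P)/G$ is genuinely a simplicial complex, and via the observation following Remark~\ref{rem_traeq} it yields Bredon's condition (A) for $\Delta(P)$ (which is in any case immediate: an order-preserving bijection fixing a chain setwise fixes it pointwise).

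The substance lies in the regularity condition: given simplices $\sigma=\{x_0,\dots,x_k\}$ and $\sigma'=\{x_0',\dots,x_k'\}$ of $\Delta(P)$ with $Gx_i=Gx_i'$ for all $i$, one must produce a single $g\in G$ with $gx_i=x_i'$ for every $i$. Here I would invoke translativity of $G\circlearrowright P$ through Lemma~\ref{lem:low}. By Remark~\ref{rem_ta}.(i), distinct elements of a chain of $P$ lie in distinct $G$-orbits, so the quotient map restricts to an order-isomorphism of each of $\sigma$, $\sigma'$ onto its image in $P/G$; as the two images coincide termwise, after relabelling we may assume $x_0<\dots<x_k$ and $x_0'<\dots<x_k'$. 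Pick $g\in G$ with $gx_k=x_k'$ (possible since $Gx_k=Gx_k'$). Then $g\sigma$ and $\sigma'$ are both chains inside $P_{\leq x_k'}$ that map to one and the same chain of $(P/G)_{\leq Gx_k'}$, term by term; since the quotient map restricts to an isomorphism $P_{\leq x_k'}\to (P/G)_{\leq Gx_k'}$ by Lemma~\ref{lem:low}, we conclude $gx_i=x_i'$ for all $i$. This $g$ witnesses regularity.

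For the final identification, note that $P\to P/G$ is order-preserving and, by Remark~\ref{rem_ta}.(i), injective on every chain, so it induces a $G$-invariant simplicial map $\Delta(P)\to\Delta(P/G)$ and hence a simplicial map $q\colon\Delta(P)/G\to\Delta(P/G)$. Surjectivity of $q$ on simplices follows by lifting a chain of $P/G$: take a preimage $x$ of its greatest element and transport the chain back through the isomorphism $P_{\leq x}\simeq(P/G)_{\leq Gx}$ of Lemma~\ref{lem:low}; injectivity of $q$ on simplices is precisely the regularity just established. Since a bijective simplicial map with simplicial inverse is an isomorphism, $q$ is the asserted isomorphism $\Delta(P)/G\simeq\Delta(P/G)$ (this also follows formally from Bredon's description of orbit spaces of regular actions). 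I expect the only real obstacle to be the bookkeeping in the regularity step: making sure the relabelling of $\sigma$ and $\sigma'$ is legitimate and that $g\sigma$ and $\sigma'$ map to the same chain of $(P/G)_{\leq Gx_k'}$ \emph{term by term}, so that Lemma~\ref{lem:low} can be applied coordinatewise rather than merely setwise.
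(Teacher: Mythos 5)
Your proof is correct, and the key step is handled by a genuinely different mechanism than the paper's. The paper establishes the regularity condition (\#$_G$) by induction on the length $k$ of the chain, following Bredon's Proposition 1.1: at the inductive step it shows $g^{-1}g_k$ stabilizes $p_{k-1}$ and then propagates this down the chain via the containment of stabilizers from Remark \ref{rem_ta}.(ii). You instead avoid induction entirely: you sort both chains, translate the top element of one onto the top element of the other, and then conclude coordinatewise equality from the injectivity of the quotient map on the principal lower order ideal $P_{\leq x_k'}$ (Lemma \ref{lem:low}, equivalently Remark \ref{rem_ta}.(i)). Both arguments are ultimately the same use of translativity, but yours is one-shot and arguably more transparent; the bookkeeping you worry about at the end (that sorting preserves the termwise orbit identification, and that $g\sigma$ and $\sigma'$ have the same image term by term) does go through, precisely because the quotient map is an order-isomorphism on each chain. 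Your preliminary observation that the induced action on $P_{\Delta(P)}$ is itself translative is also a nice addition: it gives well-definedness of $\Delta(P)/G$ via Lemma \ref{lem:simplicialquotient} without appealing to Bredon, whereas the paper extracts this from regularity. One small point to add: as the paper notes, Bredon's regularity requires the condition (\#$_H$) for \emph{every} subgroup $H\leq G$, not just for $G$ itself; your argument covers this verbatim (choose the translating element $h$ inside $H$ with $hx_k=x_k'$, and conclude as before using translativity of the ambient $G$-action), but you should say so explicitly.
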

\begin{proof}
The first part of the claim is based on the following property: 
\begin{center}
(\#$_G$) if both $p_1 < p_2 <\ldots < p_k$ and $g_1p_1 < g_2p_2 <\ldots < g_kp_k$ are simplices of $\Delta(P)$ then there is some  $g\in G$ with $gp_i=g_ip_i$ for all $i$. 
\end{center}
We prove this for translative group actions by induction on $k$ following \cite[Proposition 1.1]{Bredon}. The claim is trivial for $k=1$. Let then $k>1$, by induction hypothesis we can assume that there is $g\in G$ with $gp_i=g_ip_i$ for all $i<k$. This means that  $p_1<p_2<\ldots <p_{k-1} < g^{-1}g_k p_k$. Then both $p_{k-1}$ and $g^{-1}g_kp_{k-1}$ are below $g^{-1}g_kp_k$ thus by translativity $g^{-1}g_kp_{k-1}=p_{k-1}$. Then $g^{-1}g_k$ fixes all of $p_1<\ldots<p_{k-1}$, and therefore $g_ip_i=gp_i=g_kp_i$ for all $i<k$. Then, $g_k$ is the desired element with $g_kp_i=g_ip_i$ for all $i\leq k$. Now $G\circlearrowright \Delta(P)$ is regular if (\#$_H$) holds for the action of every subgroup $H$ of $G$. But the action of every subgroup of a group acting translatively is also translative, thus the first part of the claim is proved.

Now for the second part of the claim notice first that regularity of an action implies that the quotient simplicial complex is well-defined \cite[p. 117]{Bredon}. 
Consider the natural order-preserving map
$$
P_{\Delta(P)}/G\to P_{\Delta(P/G)},\quad G\{p_1<\ldots <p_k\} \mapsto Gp_1 <\ldots < Gp_k.
$$
Moreover, given $Gp_1 <\ldots < Gp_k$ in $P_{\Delta(P/G)}$ we can choose representatives of the orbits $g_1p_1<g_2p_2<\ldots < g_kp_k$ and, with (\#$_G$), we find $g\in G$ with $gp_i=g_i$ for all $i$. Hence, $\{Gp_1, \ldots , Gp_k\}=G\{p_1,\ldots,p_k\}$ and the (order-preserving) map
$$
P_{\Delta(P/G)}\to P_{\Delta(P)}/G,\quad
Gp_1 <\ldots < Gp_k\mapsto G(p_1< \ldots <p_k)
 $$
 is well-defined and inverse to the previous one, giving an isomorphism of posets  $P_{\Delta(P)}/G\simeq P_{\Delta(P/G)}$. In particular, since $\Delta(P/G)$ is by definition a simplicial complex, the quotient simpicial complex $\Delta(P)/G$ is well-defined and isomorphic to $\Delta (P/G)$.
\end{proof}

\begin{rem}
Notice that Bredon's definition of quotient simplicial complex agrees with ours in the sense that $P_{\Sigma/G}=P_{\Sigma}/G$ whenever the action is regular. Therefore, the previous lemma is a slight extension of the discussion in \cite[p.117]{Bredon}. 
\end{rem}

\begin{rem}\label{rem:oiss}
Given $\sigma \in P_{\Sigma}$, then the poset $P_{\geq \sigma}$ is again the poset of faces of a simplicial complex. More precisely, consider the set $V':=\{\sigma' \in P_{\Sigma} \mid \sigma \lcover \sigma'\}$. Then, $P_{\geq x}\simeq P_{\Sigma'}$ where $\Sigma'=\{ X\subseteq V' \mid \cup X\in \Sigma\}$, which is isomorphic to the {\em link} of $x$ in $\Sigma$. (Recall \cite[\S 9.9]{BjTopMet} that the {\em link}  of a face $\sigma$ in a simplicial complex $\Sigma$ is the simplicial complex $\operatorname{lk}(\sigma):=\{\tau\in \Sigma \mid \tau\cap \sigma = \emptyset \textrm{ and } \tau\cup \sigma \in \Sigma\}$.) 
\end{rem}

\section{Topological and algebraic aspects}\label{sec_toal}

\subsection{Topology} Every abstract simplicial complex as defined in the previous section has a geometric realization \cite[Chapter 1, \S 2]{Munkres} which is unique up to homeomorphism. Hence, every abstract simplicial complex has a well-defined homotopy type.

Moreover, to every partially ordered set $P$ we can associate the abstract simplicial complex of all finite chains in $P$. This is called the {\em order complex} of $P$ (notice that its dimension equals the length of $P$). 
Thus a well-defined homotopy type can be associated to every partially ordered set. Order-preserving maps induce simplicial maps of order complexes and, thus, continuous maps between geometric realizations. When we will discuss topological attributes of a poset we will always think of them as referred to the order complex. For instance, with $H_i(P)$, $\pi_i(P)$ etc. we will mean the homology or homotopy groups of the order complex. For a more careful introduction and a broader account of the scope of combinatorial algebraic topology see, e.g., \cite{BjTopMet,Koz}.

\subsubsection{Connectivity}
Given an integer $t \in \mathbb N$ we call a simplicial complex $\Sigma$  {\em $t$-connected} if it is nonempty, connected and the homotopy groups $\pi_i(\Sigma)$ are trivial for all $i=1,\ldots,t$. Analogously we call $\Sigma$ {\em $t$-acyclic} over a ring $R$ if the reduced homology $\widetilde{H}_i(\Sigma,R)$ is trivial for $i=0,\ldots, t$. We extend these definition by saying any nonempty $P$ to be ``$(-1)$-acyclic'' and ``$(-1)$-connected''.

If a $d$-dimensional complex $\Sigma$ is $(d-1)$-connected or $(d-1)$-acyclic, then we will say that it is ``well-connected'' or ``acyclic through codimension $1$''.

\def\CS{\Sigma}
\begin{rem}[On shellability]\label{remshell} A simplicial complex $\CS$ is called {\em shellable} if there exists a well-ordering $\prec$ on its set $\mathcal M$ of maximal simplices  so that for all $\sigma\in \mathcal M \setminus \min_{\prec}\mathcal M$, the intersection of $\sigma$ with the subcomplex $\CS_{\prec\sigma}$ induced by the simplices in $\mathcal M_{\prec \sigma}$ is a pure simplicial complex of dimension $\dim \sigma -1$.

An alternative, operationally advantageous characterization is the following: for all $m_1,m_2\in \mathcal M$ such that $m_1\prec m_2$ and for any $\tau\subset m_1\cap m_2$, there is $m_3\in \mathcal M$ with $m_3\prec m_2$ and $x\in m_2$ such that $m_1\cap m_2\subseteq m_3\cap m_2 = m_2\setminus \{x\}$, see \cite[Lemma 2.3]{BjWa1}.

If a pure simplicial complex $\CS$ of dimension $d$ is shellable, then it has the homotopy type of a wedge of $(d-1)$-dimensional spheres, and thus it is well-connected (see \cite[Theorem 4.1]{BjWa1}).

\end{rem}

We state for later reference  the following lemma, summarizing results by Mirzaii and van der Kallen and by Bj\"orner, Wachs and Welker. 

\begin{lem}[cf.\ Theorem 3.8 of \cite{MirKa} and Corollary 3.2 of {\cite{BWW}}]\label{LBWW}
Let $f: P\to Q$ be a poset map. Fix $t\geq 0$ and suppose that for all $q\in Q$ 
\begin{itemize}
\item[(1)] $Q_{>q}$ is $(t - \ell(Q_{<q}) - 2)$-connected, and 
\item[(2)] the fiber $f^{-1}(Q_{\leq q})$ is $\ell(Q_{<q})$-connected.
\end{itemize}
Then the homotopy groups of $P$ and $Q$ agree up to (and including) degree $t$. In particular, $P$ is $t$-connected if and only if $Q$ is $t$-connected.

The same holds, when $P$ and $Q$ are finite, replacing $t$-connectivity by $t$-aciclicity over any given field $\mathbb K$.
\end{lem}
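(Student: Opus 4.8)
The plan is to obtain the statement by combining the two cited poset fiber theorems, the only genuine work being to reconcile the connectivity (resp.\ acyclicity) indices. First I would observe that hypothesis (2) is never vacuous: for every $q\in Q$ one has $\ell(Q_{<q})\geq -1$ (with $\ell(Q_{<q})=-1$ precisely when $Q_{<q}=\emptyset$), and since being "$(-1)$-connected" means nonempty, (2) forces every fiber $f^{-1}(Q_{\leq q})$ to be nonempty, so $f$ is surjective. This places us in the setting of the Björner--Wachs--Welker poset fiber theorem \cite{BWW}: under hypothesis (2) alone it yields a homotopy decomposition
$$
\Delta(P)\;\simeq\;\Delta(Q)\;\vee\;\bigvee_{q\in Q}\Bigl(\Delta(Q_{>q})*\Delta(f^{-1}(Q_{\leq q}))\Bigr),
$$
and, in the finite case, the corresponding $\mathbb K$-linear splitting of reduced homology; \cite[Corollary 3.2]{BWW} is the quantitative consequence of this that we will actually invoke.

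Next I would estimate the connectivity of each correction term. Recall that the join of an $a$-connected and a $b$-connected complex is $(a+b+2)$-connected, and likewise $(a+b+2)$-acyclic over a field by the Künneth formula for joins. By (2), $\Delta(f^{-1}(Q_{\leq q}))$ is $\ell(Q_{<q})$-connected; by (1), $\Delta(Q_{>q})$ is $(t-\ell(Q_{<q})-2)$-connected; hence $\Delta(Q_{>q})*\Delta(f^{-1}(Q_{\leq q}))$ is $\bigl(\ell(Q_{<q})+(t-\ell(Q_{<q})-2)+2\bigr)=t$-connected for every $q$, and similarly $t$-acyclic in the finite case. Thus $\Delta(P)$ is $\Delta(Q)$ with a $t$-connected bouquet of complexes wedged on, so $\pi_i(\Delta(P))\cong\pi_i(\Delta(Q))$ for all $i\leq t$ (the passage from the wedge decomposition to agreement of homotopy groups in a range is exactly what is recorded in \cite[Theorem 3.8]{MirKa}), and in the finite case $\widetilde H_i(P;\mathbb K)\cong\widetilde H_i(Q;\mathbb K)$ for $i\leq t$. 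The assertion that $P$ is $t$-connected if and only if $Q$ is then follows immediately.

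I expect the main obstacle to be purely a matter of bookkeeping: checking that our index conventions (the shift by $\ell(Q_{<q})$, the $-2$ in (1), and the $-1$/$-2$ conventions for empty and for contractible lower intervals, e.g.\ when $q$ is minimal or maximal in $Q$) match those in \cite{BWW} and \cite{MirKa}, and verifying that the "balanced" choice of the two bounds in (1) and (2) — whose sum is exactly $t-2$ — is precisely what forces every join $\Delta(Q_{>q})*\Delta(f^{-1}(Q_{\leq q}))$ to be $t$-connected. The finiteness hypothesis in the acyclic version enters only through the homological form of the fiber theorem and the field-coefficient Künneth formula, and requires no new idea.
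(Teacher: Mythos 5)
The paper offers no proof of this lemma: it is stated explicitly as a summary of \cite[Theorem 3.8]{MirKa} and \cite[Corollary 3.2]{BWW}, so there is no internal argument to compare against. Your reconstruction is the standard derivation of \cite[Corollary 3.2]{BWW} from the wedge-decomposition fiber theorem \cite[Theorem 1.1]{BWW}, and the index arithmetic is correct, including the degenerate cases: when $q$ is maximal, $\Delta(Q_{>q})=\emptyset$ and hypothesis (1) forces $\ell(Q_{<q})\geq t$ (an empty complex being $k$-connected only for $k\leq -2$), so the join collapses to the fiber, which is then $t$-connected by (2); and the passage from ``$\Delta(Q)$ with $t$-connected complexes wedged on'' to agreement of $\pi_i$ for $i\leq t$ is justified because the inclusion of $\Delta(Q)$ is split by the collapse map and the pair is $t$-connected. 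The one point I would correct is your description of \cite[Theorem 3.8]{MirKa} as ``the passage from the wedge decomposition to agreement of homotopy groups'': that theorem is an independent result, proved by a different (nerve-type) argument that never produces a wedge decomposition, and it is cited here precisely because the decomposition of \cite{BWW} is established for finite posets, whereas the homotopy statement of the lemma is applied in this paper to infinite posets (e.g.\ in the proof of Theorem \ref{thm:hcm}). So your argument is complete for finite $P$ and $Q$ (and for the field-coefficient acyclicity version, where finiteness is assumed anyway), but for infinite posets you must invoke \cite{MirKa} as a genuinely separate input rather than as a corollary of the wedge decomposition.
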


\begin{rem}\label{wellconn}
If $P$ and $Q$ are ranked posets of the same length $d$ and $f$ is rank-preserving, then Lemma \ref{LBWW} with $t=d-1$ can be stated as follows. {\em Suppose that, for all $q\in Q$,  (1) $Q_{>q}$ is well-connected and (2) $f^{-1}(Q_{\leq q})$ is well-connected. Then $Q$ is well-connected if and only if $P$ is.} In order to see that this claim reduces to Lemma \ref{LBWW} under the stated conditions, notice that $(t-\ell(Q_{<q}) -2)=(d-\ell(Q_{<q}) -3)=(\ell(Q_{> q}) -1)$ and that $\ell(Q_{<q})=\ell(Q_{\leq q})-1=\ell(f^{-1}(Q_{\leq q}))-1$ (the last equality because $f$ is rank-preserving).
 \end{rem}

\subsubsection{Cohen-Macaulay complexes and posets}
We will be concerned with a well-known property of simplicial complexes with strong commutative-algebraic implications. %
\begin{defi} \label{def:CM}
We will call a simplicial complex $\Sigma$ of dimension $d$ {\em Cohen-Macaulay} if for every face $\sigma \in \Sigma$ (including the case $\sigma=\emptyset$) the link $\operatorname{lk}(\sigma)$ of $\sigma$ in $\Sigma$ is $(\dim(\operatorname{lk}(\sigma))-1)$-connected. 
  For any given ring of coefficients $R$, we call $\Sigma$ ``Cohen-Macaulay over $R$'' if $\operatorname{lk}(\sigma)$ is $(\dim(\operatorname{lk}(\sigma))-1)$-acyclic over $A$. For every  $k\in \mathbb N$, we will say 
\begin{center}
``$\Sigma$ is $CM(k)$''
\end{center}
to signify that $\Sigma$ is Cohen-Macaulay over every ring whose characteristic is either $0$ or does not divide $k$.
As usual, we can define the Cohen-Macaulay properties and notations for any poset $P$ by reference to the associated order complex $\Delta(P)$.
\end{defi}

\begin{rem}\label{PCM_crit}
We recall from  \cite[\S 11.9]{BjTopMet} the following characterization of Cohen-Macaulay posets. If $P$ is a poset of finite length, then $P$ is homotopy Cohen-Macaulay (resp.\ Cohen-Macaulay in a given characteristic $k$) if and only if every open interval $(x,y)\subseteq \widehat{P}$ is $(\ell(x,y)-1)$-connected (resp.\ $(\ell(x,y)-1)$-acyclic).
\end{rem}

\subsubsection{Euler characteristic}

As a last piece of preparation let us consider Euler characteristics of posets.  
We let $\epsilon(P)$ denote the {\em reduced} Euler characteristic of the order complex of $P$, which can be expressed for instance by the alternating sum $\sum_{i\geq 0} (-1)^i\dim\widetilde{H}_i(\Delta(P),\mathbb Q)$ of the dimensions of the rational homology groups of $\Delta(P)$.  (From this, the standard Euler characteristic an be recovered by adding $1$, see \cite{Sta}.) We give for completeness a proof of the following elementary lemma.

\begin{lem}\label{lem:EuCar}
Let $P$ be a bounded-below poset. Then
$$
\epsilon(\check{P}) = - \chi_P(1).
$$
If $P$ is also bounded above, then
$$
\epsilon(\sz{P})= \chi_P(0).
$$
\end{lem}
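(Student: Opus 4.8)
The plan is to compute both reduced Euler characteristics directly from the definition of the characteristic polynomial, using the fact that the M\"obius function of a bounded-below poset is the reduced Euler characteristic of an order complex. Recall that for a finite-length, bounded-below poset $P$ of length $d$ we have $\chi_P(t) = \sum_{x\in P}\mu_P(\hat0,x)\,t^{d-\rk(x)}$, so evaluating at $t=1$ gives $\chi_P(1) = \sum_{x\in P}\mu_P(\hat0,x)$.

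First I would recall (from \cite{Sta}, or from \cite[\S 11.9]{BjTopMet}) the standard identity $\mu_P(\hat0,x) = \epsilon(\check{P}_{\leq x}) $ for $x > \hat0$, i.e.\ the value of the M\"obius function equals the reduced Euler characteristic of the order complex of the open lower interval $(\hat0,x) = \check{P}_{\leq x}$, while $\mu_P(\hat0,\hat0)=1$. Then
$$
\chi_P(1) = 1 + \sum_{x\in\check{P}}\epsilon(\check{P}_{\leq x}).
$$
On the other hand, I would use the well-known formula expressing the reduced Euler characteristic of an order complex as a sum over the M\"obius function of the cone, or more directly the identity $\epsilon(\check{P}) = \sum_{x\in\check{P}}\mu_{\widehat{\check P}}(\hat0, \ldots)$; the cleanest route is via the recursion $\sum_{y\leq x}\mu_P(\hat0,y)=0$ for $x>\hat0$ together with the interpretation of $\mu$ as Euler characteristic. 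Concretely, applying the M\"obius identity in $\widehat{P}$ (the poset $P$ with a top element $\hat1$ adjoined) gives $\sum_{x\in P}\mu_P(\hat0,x) = -\mu_{\widehat P}(\hat0,\hat1) = -\epsilon(\sz{\widehat P}) = -\epsilon(\check P)$, since removing $\hat0$ and $\hat1$ from $\widehat P$ leaves exactly $\check P$. This yields $\epsilon(\check P) = -\sum_{x\in P}\mu_P(\hat0,x) = -\chi_P(1)$, which is the first claim.

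For the second claim, when $P$ is also bounded above with top $\hat1$, I would evaluate $\chi_P$ at $t=0$: only the term with $\rk(x)=d$ survives, and since $P$ is bounded above and graded of length $d$, the unique element of rank $d$ is $\hat1$. Hence $\chi_P(0) = \mu_P(\hat0,\hat1)$. Now $\mu_P(\hat0,\hat1)$ is by definition the reduced Euler characteristic of the order complex of the open interval $(\hat0,\hat1) = \sz{P}$, so $\chi_P(0) = \epsilon(\sz{P})$, as desired. (One subtlety: I should make sure $\hat 1$ is really the only rank-$d$ element; this follows because every maximal chain in a graded bounded-below poset of length $d$ ends at a maximal element, and boundedness above forces that maximal element to be $\hat1$, so any $x$ with $\rk(x)=d$ satisfies $x\leq\hat1$ and $\rk(x)=\rk(\hat1)$, hence $x=\hat1$.)

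The main obstacle is bookkeeping the reduced-versus-unreduced conventions and the exact relationship between $\mu_P(\hat 0, x)$ and the Euler characteristic of the various sub- and quotient-posets ($\check P$, $\sz P$, principal open intervals), so that no stray $\pm1$ is lost. Once the identity $\mu_P(\hat0,\hat1)=\epsilon(\sz P)$ (for bounded $P$) and its companion $\sum_{x\in P}\mu_P(\hat0,x) = -\epsilon(\check P)$ are cleanly established — both are classical, essentially \cite[Proposition 3.8.6]{Sta} — the rest is just substituting $t=1$ and $t=0$ into the definition of $\chi_P$. I expect the whole proof to be short.
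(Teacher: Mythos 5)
Your proposal is correct and follows essentially the same route as the paper: both reduce everything to Hall's theorem $\mu(\hat 0,\hat 1)=\epsilon(\sz{P})$, obtain $\epsilon(\check{P})=\mu_{\widehat P}(\hat0,\hat1)=-\sum_{x\in P}\mu_P(\hat0,x)=-\chi_P(1)$ via the M\"obius recursion in $\widehat P$, and get the bounded case by noting $\chi_P(0)=\mu_P(\hat0,\hat1)$. The only difference is cosmetic: your opening detour through $\mu_P(\hat0,x)=\epsilon(\check{P}_{\leq x})$ is never used, and your explicit check that $\hat1$ is the unique rank-$d$ element is a detail the paper leaves implicit.
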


\begin{proof}
Key is the following interpretation of the M\"obius function of a bounded poset $P$ known as ``Hall's theorem'' \cite[Proposition 3.8.5]{Sta}:
$$
\mu_P(\hat 0 , \hat 1) = \epsilon(\sz{P}).
$$
Recall that, since $P$ is bounded below, $\widehat{P}$ denotes the poset $P$ with a unique maximal element $\hat 1$ adjoined. Then 
$\epsilon (\check{ P}) = \epsilon \left(\sz{\widehat{P}}\right) = \mu_{\widehat{P}}(\hat 0 , \hat 1)$.

On the other hand, by definition of the M\"obius function \cite[Chapter 3, \S 7]{Sta}
$$
\mu_{\widehat{P}}(\hat 0 ,\hat 1) = 
- \sum_{\hat 0 \leq x < \hat 1} \mu_{\widehat{P}}(\hat 0, x) = 
- \sum_{x\in P} \mu_{{P}}(\hat 0, x) = - \chi_P(1).
$$

We conclude that $\epsilon (\check{P}) = - \chi_P(1)$.

If $P$ is also bounded above, then immediately $\chi_P(0)=\mu_P(\hat 0, \hat 1)= \epsilon(\sz{P})$. 
\end{proof}

\subsection{Algebra}
Given a finite simplicial complex $\Sigma$ and a field $\kr$, consider the polynomial ring $\kr[x_v \mid v\in V]$ whose variables are indexed by vertices of $\Sigma$. Therein define the ideal
$$
\mathcal I _{\Sigma} := 
( 
\prod_{v\in \sigma} x_v \mid \sigma \not\in \Sigma
)
$$
generated by all monomials corresponding to non-faces of $\Sigma$.

\begin{defi}
The {\em Stanley-Reisner ring} of a finite simplicial complex $\Sigma$ is the quotient
$$
\mathcal R(\Sigma) := \kr [x_v \mid v\in V]/ \mathcal I _{\Sigma}
$$
\end{defi}

\begin{rem}
One of the basic facts about Stanley-Reisner rings is that the Cohen-Macaulay property for $\Sigma$ (see Definition \ref{def:CM}) implies  the (algebraic) Cohen-Macaulay property for the ring $\mathcal R(\Sigma)$ over every field  (the latter algebraic property is in fact equivalent to a homological version of the Cohen-Macaulay property for $\Sigma$, obtained by replacing ``connected'' by ``acyclic'' in Definition \ref{def:CM}, see \cite{Reisner}). 
\end{rem}

\begin{rem} Stanley defined an analogous ring associated to every finite simplicial poset. We will review this definition in Section \ref{sec:StanleyFin}.
\end{rem}

\section{Stanley-Reisner rings of finite-length simplicial posets}
\label{sec:CFSP}

Throughout the section, $P$ will be a simplicial poset with atoms $A(P)$ and $G$ will be a group acting on $P$ by automorphisms. Recall that, under these hypotheses, the quotient $P/G$ is again a simplicial poset. Let us denote by $f\colon P \to P/G$ the standard projection. Given $p \in P$, as usual we use the notation $Gp$ to denote $f(p) \in P/G$.

\subsection{The definition}
Let $\mathrm{max}(P)$ denote the set of  maximal elements of $P$. Given a nonempty collection $\tau \subseteq \mathrm{max}(P)$, we denote by $\downset{\tau}$ the order ideal of $P$ given by $\bigcap_{t \in \tau}P_{\leqslant t}$. Note that $\downset{\tau}$ is the poset of faces of a simplicial complex. The associated Stanley-Reisner ring will be denoted by $R_{\downset{\tau}}$. When $\downset{\tau} = \{\hat{0}\}$, one has that $R_{\downset{\tau}} = \kr$. For any set bounded above inside a given $\downset{\tau}$, the join in $\downset{\tau}$ is well-defined.

Following Yuzvinsky \cite{Yuzv}, we call $X(P)$ the set 
\[
\{
\downset{\tau} \mid \emptyset \neq \tau \subseteq \max(P)
\}
\]
of all (lower) order ideals $\downset{\tau} \subseteq P$ coming from nonempty collections $\tau \subseteq \mathrm{max}(P)$, with the partial order given by \[\downset{\tau} \leqslant_{X(P)} \downset{\tau'} \text{ if and only if } \downset{\tau} \supseteq \downset{\tau'}.\]

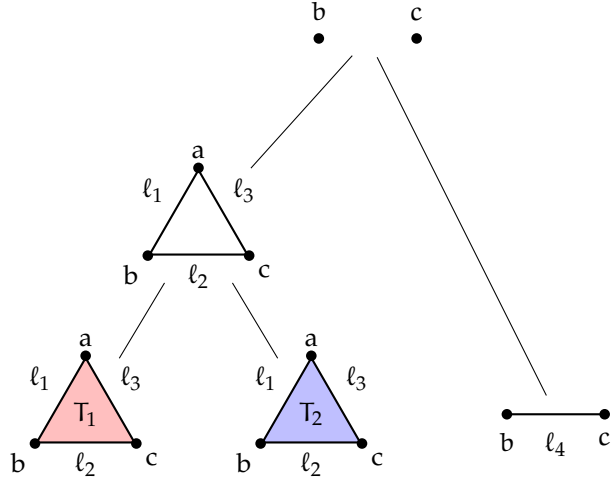
\begin{figure}[h]
\centering
\begin{tikzpicture}
\node[draw, thick, fill=red!25, minimum size=1.5cm, regular polygon, 
    regular polygon sides=3,
    label=corner 1:a, label=corner 2:b, label=corner 3:c, label=side 1:${\ell}_1$, label=side 2:${\ell}_2$, label=side 3:${\ell}_3$, label=center:$T_1$] (T1) at (-3, 0) {};
\node[draw, thick, fill=blue!25, minimum size=1.5cm, regular polygon, 
    regular polygon sides=3,
    label=corner 1:a, label=corner 2:b, label=corner 3:c, label=side 1:${\ell}_1$, label=side 2:${\ell}_2$, label=side 3:${\ell}_3$, label=center:$T_2$] (T2) at (0, 0) {};
\node[draw, thick, minimum size=1.5cm, regular polygon, 
    regular polygon sides=3,
    label=corner 1:a, label=corner 2:b, label=corner 3:c, label=side 1:${\ell}_1$, label=side 2:${\ell}_2$, label=side 3:${\ell}_3$] (T1T2) at (-1.5, 2.5) {};
\node[label=above: $b$] (b) at (0.1,5) {};
\node (bc) at (0.75,5) {};
\node[label=above: $c$] (c) at (1.4,5) {};
\node[label=below: $b$] (lowb) at (2.6,0) {};
\node[label=below: $\ell_4$] (l4) at (3.25,0) {};
\node[label=below: $c$] (lowc) at (3.9,0) {}; 
\fill (T1.corner 1) circle[radius=2pt];
\fill (T1.corner 2) circle[radius=2pt];
\fill (T1.corner 3) circle[radius=2pt];
\fill (T2.corner 1) circle[radius=2pt];
\fill (T2.corner 2) circle[radius=2pt];
\fill (T2.corner 3) circle[radius=2pt];
\fill (T1T2.corner 1) circle[radius=2pt];
\fill (T1T2.corner 2) circle[radius=2pt];
\fill (T1T2.corner 3) circle[radius=2pt];
\fill (b) circle[radius=2pt];
\fill (c) circle[radius=2pt];
\fill (lowb) circle[radius=2pt];
\fill (lowc) circle[radius=2pt];
\draw[thick, shorten <= -4pt, shorten >= -4pt] (lowb) -- (lowc);
\draw[shorten <= 12pt, shorten >= 12pt] (T1) to (T1T2);
\draw[shorten <= 12pt, shorten >= 12pt] (T2) to (T1T2);
\draw[shorten <= 4pt, shorten >= 4pt] (l4) to (bc);
\draw[shorten <= 4pt, shorten >= 18pt] (bc) to (T1T2);
\end{tikzpicture}
\caption{The poset $X(P)$ associated with the simplicial poset $P$ from Figure \ref{simplicialposet_example} in Section \ref{sec:simplicial_posets}. Since each order ideal of $P$ of  the form $\downset{\tau}$ is the poset of faces of a simplicial complex, we use here such (labeled) simplicial complexes for visualization purposes. For instance, $\downset{\{T_1, T_2\}} = P_{\leqslant{T_1}} \cap P_{\leqslant{T_2}}$ is represented above by the empty triangle with vertices $\{a, b, c\}$ and edges $\{\ell_1, \ell_2, \ell_3\}$, whereas $\downset{\{T_1, {\ell}_4\}} = \downset{\{T_2, {\ell}_4\}} = \downset{\{T_1, T_2, {\ell}_4\}}$ is represented by the disconnected simplicial complex with two points $b$ and $c$.}
\label{XP_example}
\end{figure}

Any poset can be made into a topological space by considering the Alexandrov topology, where the open sets are the upper sets of the poset. Any sheaf (say, of commutative rings) on a poset is then completely determined by the assignment of a covariant functor from the poset (seen as a category as in Remark \ref{rem:scwol}) to the category of commutative rings (see, e.g., \cite{KeBa}).

With this in mind, again following \cite{Yuzv}, we define the sheaf of commutative rings $Y(P)$ on the poset $X(P)$ by the assignments
\begin{align*} X(P) \ni \downset{\tau} &\mapsto R_{\downset{\tau}}\\
(\downset{\tau} \leqslant_{X(P)} \downset{\tau'}) &\mapsto \pi^{\downset{\tau}}_{\downset{\tau'}}\colon R_{\downset{\tau}} \twoheadrightarrow R_{\downset{\tau'}},
\end{align*}
where $\pi^{\downset{\tau}}_{\downset{\tau'}}\colon R_{\downset{\tau}} \twoheadrightarrow R_{\downset{\tau'}}$ is the natural projection.

\begin{defi} \label{df:anello}
The {\em Stanley-Reisner ring} of $P$ is then the ring of (global) sections
$$
\mathcal R(P) := \Gamma(Y(P)).
$$
We view any $q\in \mathcal R(P)$ as an $X(P)$-tuple of polynomials, and for every $\downset{\tau}\in X(P)$ we denote by $q_{\downset{\tau}}\in R_{\downset{ \tau}}$ the component associated to $\downset{\tau}$.
\end{defi}

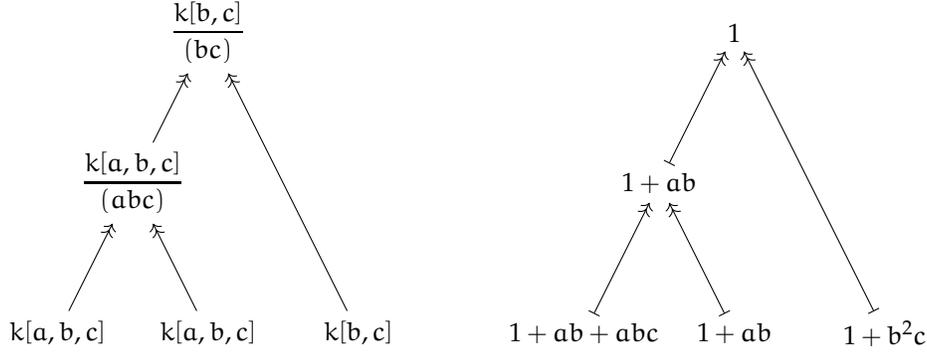
\begin{figure}[h]
\centering
\begin{tikzpicture}
\node (T1) at (0,0) {$k[a,b,c]$};
\node (T2) at (2,0) {$k[a,b,c]$};
\node (l4) at (4,0) {$k[b,c]$};
\node (T1T2) at (1,2) {$\displaystyle\frac{k[a,b,c]}{(abc)}$};
\node (T1T2l4) at (2,4){$\displaystyle\frac{k[b,c]}{(bc)}$};
\draw[->>] (T1) -- (T1T2);
\draw[->>] (T1T2) -- (T1T2l4);
\draw[->>] (T2) -- (T1T2);
\draw[->>] (l4) -- (T1T2l4);
\node (pT1) at (7,0) {$1+ab+abc$};
\node (pT2) at (9,0) {$1+ab$};
\node (pl4) at (11,0) {$1+b^2c$};
\node (pT1T2) at (8,2) {$1+ab$};
\node (pT1T2l4) at (9,4){$1$};
\draw[|->>] (pT1) -- (pT1T2);
\draw[|->>] (pT1T2) -- (pT1T2l4);
\draw[|->>] (pT2) -- (pT1T2);
\draw[|->>] (pl4) -- (pT1T2l4);
\end{tikzpicture}
\caption{On the left, the sheaf $Y(P)$ on the poset $X(P)$ from Figure \ref{XP_example}; on the right, one of the global sections of $Y(P)$. Data are here organized in the same shape as Figure \ref{XP_example}.}
\end{figure}

With a slight abuse of notation, we will reserve the letter $\sigma$ to denote both a maximal element of $P$ and a minimal element of $X(P)$, since every minimal element $\downset{\{\sigma\}}$ of $X(P)$ is uniquely determined by the choice of a maximal element $\sigma$ of $P$. In particular, we will write $q_\sigma$ for $q_{\downset{\{\sigma\}}}$.

Let us record here a simple observation that will come in handy in what follows.

\begin{defi}
Let $p \in P$. We say that a monomial $m = \prod_{v \in A(P)} x_v^{\alpha_v}$ is \emph{supported at $p$} if $\alpha_v = 0$ for all $v \in A(P) \setminus P_{\leqslant p}$.
\end{defi}

\begin{lem} \label{supportlemma}
Let $q \in \Gamma(Y(P))$, $p \in P$, $\sigma, \sigma' \in \mathrm{max}(P)$ such that $p \leqslant \sigma, \sigma'$. Then the monomials supported at $p$ appear with the same coefficients inside $q_{\sigma}$ and $q_{\sigma'}$.
\end{lem}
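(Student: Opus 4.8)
The statement is a ``coherence'' property of global sections: two maximal elements $\sigma,\sigma'$ that share a common lower bound $p$ must agree on all monomials supported at $p$. The natural strategy is to travel from $\sigma$ to $\sigma'$ inside $X(P)$ through the common vertex $\downset{\{\sigma,\sigma'\}}$, and to show that the projection maps $\pi^{\downset{\{\sigma\}}}_{\downset{\{\sigma,\sigma'\}}}$ and $\pi^{\downset{\{\sigma'\}}}_{\downset{\{\sigma,\sigma'\}}}$ do not destroy information about monomials supported at $p$. Concretely, first I would observe that $p \leqslant \sigma$ and $p \leqslant \sigma'$ imply $P_{\leqslant p} \subseteq P_{\leqslant \sigma} \cap P_{\leqslant \sigma'}$, i.e. $P_{\leqslant p} \subseteq \downset{\{\sigma,\sigma'\}}$. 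Hence every monomial $m$ supported at $p$ uses only variables $x_v$ with $v \in A(P) \cap P_{\leqslant p} \subseteq A(\downset{\{\sigma,\sigma'\}})$, so $m$ is (the image of) a genuine monomial in the polynomial ring underlying $R_{\downset{\{\sigma,\sigma'\}}}$, $R_{\downset{\{\sigma\}}}$ and $R_{\downset{\{\sigma'\}}}$ alike.

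The key point is then that the projection $\pi^{\downset{\{\sigma\}}}_{\downset{\{\sigma,\sigma'\}}}\colon R_{\downset{\{\sigma\}}} \twoheadrightarrow R_{\downset{\{\sigma,\sigma'\}}}$ is a map of polynomial quotient rings that kills exactly the variables $x_v$ with $v \notin \downset{\{\sigma,\sigma'\}}$ and imposes the extra Stanley--Reisner relations of $\downset{\{\sigma,\sigma'\}}$ (monomials corresponding to non-faces). Since $P_{\leqslant \sigma}$ is a Boolean algebra, $R_{\downset{\{\sigma\}}} = \kr[x_v \mid v \in A(P) \cap P_{\leqslant \sigma}]$ is an \emph{honest polynomial ring} with no Stanley--Reisner relations; the monomials of $R_{\downset{\{\sigma\}}}$ supported at $p$ form a $\kr$-basis of a subring, and they map bijectively onto the monomials of $R_{\downset{\{\sigma,\sigma'\}}}$ supported at $p$ because such monomials only involve variables indexed by $A(P)\cap P_{\leqslant p}$, which survive the projection and whose products all lie in $\downset{\{\sigma,\sigma'\}}$ (again because $P_{\leqslant p} \subseteq \downset{\{\sigma,\sigma'\}}$ is a Boolean interval, so no non-face relations are triggered). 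In particular, \emph{the coefficient of a monomial $m$ supported at $p$ in $\pi^{\downset{\{\sigma\}}}_{\downset{\{\sigma,\sigma'\}}}(q_\sigma)$ equals the coefficient of $m$ in $q_\sigma$}, and there is no ``collapsing'' of two distinct such monomials onto the same image.

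Putting this together: since $q$ is a global section, $q_{\downset{\{\sigma,\sigma'\}}} = \pi^{\downset{\{\sigma\}}}_{\downset{\{\sigma,\sigma'\}}}(q_\sigma) = \pi^{\downset{\{\sigma'\}}}_{\downset{\{\sigma,\sigma'\}}}(q_{\sigma'})$. By the previous paragraph, the coefficient of $m$ in $q_{\downset{\{\sigma,\sigma'\}}}$ equals both the coefficient of $m$ in $q_\sigma$ and the coefficient of $m$ in $q_{\sigma'}$; these coincide, which is the claim. For the general element $p$ (not necessarily an atom, and with $\sigma,\sigma'$ not necessarily comparable in any special way beyond $p\leqslant \sigma,\sigma'$), nothing changes: all that was used is $P_{\leqslant p}\subseteq \downset{\{\sigma,\sigma'\}}$ and the fact that $R_{\downset{\{\sigma\}}}$, $R_{\downset{\{\sigma'\}}}$ are polynomial rings.

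\textbf{Main obstacle.} The one place demanding care is verifying that the projection map really is ``coefficient-preserving'' on monomials supported at $p$ — that is, that no non-face relation of $\downset{\{\sigma,\sigma'\}}$ rewrites such a monomial, and that two distinct monomials supported at $p$ cannot become equal in $R_{\downset{\{\sigma,\sigma'\}}}$. Both follow from $P_{\leqslant p}$ being a Boolean subinterval contained in $\downset{\{\sigma,\sigma'\}}$ (so every subset of $A(P)\cap P_{\leqslant p}$ is a face, and the variables are algebraically independent modulo the ideal, on the nose for those monomials), but this is the step where one must be scrupulous about what the natural projection $\pi$ does. Everything else is bookkeeping with the definition of $\Gamma(Y(P))$.
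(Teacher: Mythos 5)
Your proposal is correct and follows exactly the route of the paper's (one-line) proof: both pass through the common element $\downset{\{\sigma,\sigma'\}}$ of $X(P)$ and use that the restriction maps preserve coefficients of monomials supported at $p$, since $P_{\leqslant p}\subseteq\downset{\{\sigma,\sigma'\}}$. You simply spell out the verification that the projections neither kill nor identify such monomials, which the paper leaves implicit.
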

\begin{proof}
Since $p \in P_{\leqslant\sigma} \cap P_{\leqslant\sigma'}$, any monomial supported at $p$ appears inside $q_{\sigma}$ (respectively $q_{\sigma'}$) with the same coefficient it has inside $q_{\downset{\tau}} \in R_{\downset{\tau}}$, where $\tau = \{\sigma, \sigma'\}$.
\end{proof}

\subsection{The finite case}\label{sec:StanleyFin}

In this section we show that, in the finite case, we recover the classical constructions.

Fix a field $\kr$. Given a finite simplicial poset $P$, we let $\widetilde{S}$ be the polynomial ring $\kr[x_p \mid p \in P]$ (with the grading given by $\deg(x_p) = rk(p)$) and $S := \tilde{S}/(x_{\hat 0}-1)$ be its dehomogenization with respect to $x_{\hat 0}$.
Given an order ideal $\mathfrak{a}$ inside $P$ and $p, q \in \mathfrak{a}$, let $u_{\mathfrak{a}}(p,q)$ be the (possibly empty) set of minimal common upper bounds in $\mathfrak{a}$ of $p$ and $q$. In symbols, \[u_{\mathfrak{a}}(p,q) := \{z \in \mathfrak{a} \mid z \geqslant p, q \text{ and } \nexists z' \in \mathfrak{a} \text{ s.t. } z > z' \geq p, q\}.\]

We define $\widetilde{I_{\mathfrak{a}}^P}$ to be the following ideal of $\widetilde{S}$:
\[\widetilde{I_{\mathfrak{a}}^P} := (x_p \mid p \notin \mathfrak{a}) + (x_px_q - x_{p \wedge q}\sum_{z \in u_{\mathfrak{a}}(p,q)}x_z \mid p, q \in \mathfrak{a}, \ p, q \text{ incomparable}),\]
where the sum $\sum_{z \in u_{\mathfrak{a}}(p,q)}x_z$ is taken to be zero when $p$ and $q$ have no common upper bound in $\mathfrak{a}$ (in this case, the generator $x_px_q - x_{p \wedge q}\sum_{z \in u_{\mathfrak{a}}(p,q)}x_z$ is simply the monomial $x_px_q$).
Let then $I_{\mathfrak{a}}^P$ be the dehomogenization of $\widetilde{I_{\mathfrak{a}}^P}$ with respect to $x_{\hat 0}$, i.e. the ideal of $S$ obtained from $\widetilde{I_{\mathfrak{a}}^P}$ by setting to $1$ all occurrences of the variable $x_{\hat 0}$.

\begin{defi}[{\cite[Section 3]{StaSP}}] 
To every finite simplicial poset $P$ there are two associated rings,
$$\widetilde{A_P}:=\widetilde{S}/\widetilde{I_P^P},\quad\quad A_P:= S/{I_P^P}.$$
Note that $A_P$ is a finitely generated positively graded algebra with $(A_P)_0 = \kr$. 
\end{defi}

\begin{prop}[{\cite[Section 3]{StaSP}}] 
When $P$ is the poset of faces of a finite simplicial complex $\Sigma$, then $A_P$ coincides with $\mathcal R(\Sigma)$. In general, the algebraic $h$-polynomial of $A_P$ (i.e.~the numerator of the Hilbert series of $A_P$ expressed as a rational function) coincides with $h_P(t)$, the combinatorial $h$-polynomial of the simplicial poset $P$ (see Definition \ref{df:fh}).

If the poset $P$ is Cohen-Macaulay (in some characteristic), then so are $A_P$ and $\widetilde{A_P}$ (in the same characteristic).  
\end{prop}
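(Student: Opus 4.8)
All three claims are due to Stanley \cite[Section 3]{StaSP}; the plan is to recall the arguments in the notation fixed above, the only genuinely nontrivial ingredient being the straightening law for $A_P$.

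\textbf{The simplicial complex case.} When $P = P_\Sigma$ for a finite simplicial complex $\Sigma$ on vertex set $V$, the atoms of $P$ are the singletons $\{v\}$. In $P_\Sigma$ any two faces $\sigma,\tau$ admitting a common upper bound satisfy $\sigma \cup \tau \in \Sigma$, so $\sigma \cup \tau$ is their unique minimal common upper bound, while $\sigma \wedge \tau = \sigma \cap \tau$ always; hence for incomparable $p = \sigma$, $q = \tau$ the set $u_P(p,q)$ is empty or the singleton $\{\sigma\cup\tau\}$, and $I_P^P$ is generated by the monomials $x_\sigma$ ($\sigma \notin \Sigma$), the binomials $x_\sigma x_\tau - x_{\sigma\cap\tau}x_{\sigma\cup\tau}$ ($\sigma\cup\tau\in\Sigma$) and the monomials $x_\sigma x_\tau$ ($\sigma\cup\tau\notin\Sigma$). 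I would define $\varphi \colon A_P \to \mathcal R(\Sigma)$ by $x_\sigma \mapsto \prod_{v\in\sigma}x_v$; it is a well-defined ring map since the pointwise identity $\mathbbm 1_{v\in\sigma} + \mathbbm 1_{v\in\tau} = \mathbbm 1_{v\in\sigma\cap\tau} + \mathbbm 1_{v\in\sigma\cup\tau}$ kills the binomial relations, and $x_\sigma x_\tau$ maps to a monomial supported on the non-face $\sigma\cup\tau$, hence into $\mathcal I_\Sigma$. Surjectivity is immediate; for injectivity I would match $\kr$-bases, observing that the monomial basis element $\prod_{v\in\sigma}x_v^{b_v}$ of $\mathcal R(\Sigma)$ (with $\sigma\in\Sigma$, $b_v > 0$) is the $\varphi$-image of the chain monomial $\prod_j x_{\sigma_j}^{a_j}$ attached to the flag $\sigma_1 \subsetneq \cdots \subsetneq \sigma_k = \sigma$ of superlevel sets of $b$ (with the $a_j$ the successive value-gaps), and that these chain monomials are exactly Stanley's straightened-monomial basis of $A_P$.

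\textbf{The $h$-polynomial.} For general simplicial $P$, repeated use of the relations $x_p x_q = x_{p\wedge q}\sum_{z\in u_P(p,q)}x_z$ rewrites any monomial of $A_P$ as a $\kr$-linear combination of monomials whose support is a chain of $\check P$; Stanley's straightening law asserts that these chain monomials form a $\kr$-basis of $A_P$. Establishing this (termination of the rewriting, and absence of further $\kr$-linear relations among the chain monomials) is the one point that requires real work, and is the main obstacle. Granting it, a standard generating-function computation — exploiting that each lower interval $P_{\leqslant p}$ is a Boolean algebra of rank $\operatorname{rk}(p)$ — gives
$$
\operatorname{Hilb}(A_P,t) = \sum_{i=0}^{d} f_{i-1}(P)\left(\frac{t}{1-t}\right)^{i} = \frac{\sum_{i=0}^{d} f_{i-1}(P)\,t^{i}(1-t)^{d-i}}{(1-t)^{d}} = \frac{h_P(t)}{(1-t)^{d}},
$$
so the numerator of the Hilbert series — the algebraic $h$-polynomial of $A_P$ — is the combinatorial $h$-polynomial $h_P(t)$ of Definition \ref{df:fh} (the same computation, keeping the grading $\deg x_p = \operatorname{rk}(p)$, applies to $\widetilde{A_P}$).

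\textbf{Cohen--Macaulayness.} Finally I would invoke Stanley's simplicial-poset version of Reisner's criterion \cite{Reisner}: $A_P$ is Cohen--Macaulay over a field $\kr$ precisely when $P$ is homologically Cohen--Macaulay over $\kr$, i.e., by Remark \ref{PCM_crit}, when every open interval $(x,y)\subseteq\widehat P$ is $(\ell(x,y)-1)$-acyclic over $\kr$ (for $P = P_\Sigma$ the relevant links are the $\operatorname{lk}_\Sigma(\sigma)$, cf.\ Remark \ref{rem:oiss}). Thus if $P$ is Cohen--Macaulay — whether in the homotopy sense or in characteristic $k$ in the sense of Definition \ref{def:CM} — the required acyclicity holds over every field of characteristic $0$ or not dividing $k$, so $A_P$ is Cohen--Macaulay over each such field; and $\widetilde{A_P}$, of which $A_P$ is the dehomogenization, has the same Cohen--Macaulay status (\cite[Section 3]{StaSP}).
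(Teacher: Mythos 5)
The paper offers no proof of this proposition at all---it is imported verbatim from Stanley \cite[Section 3]{StaSP}---and your sketch faithfully reproduces Stanley's own arguments (the map $x_\sigma\mapsto\prod_{v\in\sigma}x_v$ with bases matched via the chain-monomial basis, the Hilbert-series computation $\operatorname{Hilb}(A_P,t)=\sum_i f_{i-1}(P)\,t^i/(1-t)^i$ whose numerator over $(1-t)^d$ is $h_P(t)$, and the Reisner-type criterion for Cohen--Macaulayness), deferring for the one nontrivial ingredient, the straightening-law basis, to Stanley, exactly as the paper itself does elsewhere via \cite[Lemma 3.4]{StaSP}. So this is essentially the same (citation-based) approach and is fine; the only cosmetic blemishes are that your parenthetical about running the same Hilbert-series computation for $\widetilde{A_P}$ with $\deg x_{\hat 0}=0$ does not quite work (and is not needed, since the proposition claims only Cohen--Macaulayness for $\widetilde{A_P}$), and that your final paragraph momentarily conflates ``Cohen--Macaulay in characteristic $k$'' with the paper's $CM(k)$ convention, whereas the claim (and the argument you actually run) is characteristic-by-characteristic.
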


Let us record here a technical observation that will come in handy while proving Proposition \ref{posetideals} below.

\begin{lem} \label{radical}
Let $P$ be a finite simplicial poset and $\mathfrak{a}$ a lower order ideal of $P$. Then $I_{\mathfrak{a}}^P$ is a radical ideal.
\end{lem}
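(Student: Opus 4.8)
To prove that $I_{\mathfrak{a}}^P$ is radical I would first reduce to the reducedness of Stanley's face ring of a finite simplicial poset. Observe that $\mathfrak{a}$ is itself a finite simplicial poset: being a lower order ideal it satisfies $\mathfrak{a}_{\leqslant p} = P_{\leqslant p}$ for all $p \in \mathfrak{a}$, so all its principal lower intervals are Boolean, and $\hat 0 \in \mathfrak{a}$ unless $\mathfrak{a} = \emptyset$ (a trivial case, since then $I_{\mathfrak{a}}^P$ contains $x_{\hat 0}$ and hence, after dehomogenization, is the whole ring). Now $I_{\mathfrak{a}}^P$ contains $x_p$ for every $p \notin \mathfrak{a}$, while its remaining generators $x_p x_q - x_{p\wedge q}\sum_{z \in u_{\mathfrak{a}}(p,q)} x_z$ only involve variables indexed by elements of $\mathfrak{a}$ (note $p \wedge q \leqslant p$, and $u_{\mathfrak{a}}(p,q) \subseteq \mathfrak{a}$ with $u_{\mathfrak{a}}$ and $p \wedge q$ computed inside $\mathfrak{a}$); so quotienting out the variables $x_p$ with $p \notin \mathfrak{a}$ identifies $S/I_{\mathfrak{a}}^P$ with the face ring $A_{\mathfrak{a}}$ of the simplicial poset $\mathfrak{a}$ in the sense of \cite[\S 3]{StaSP}. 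It therefore suffices to show that $A_Q$ is reduced for every finite simplicial poset $Q$.

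For the latter I would embed $A_Q$ into a product of polynomial rings. For each $\sigma \in \max(Q)$ let $A_\sigma$ be the set of atoms of $Q$ lying below $\sigma$, so that $|A_\sigma| = \rk(\sigma)$ and $Q_{\leqslant\sigma}$ is the Boolean lattice $2^{A_\sigma}$, and consider the $\kr$-algebra map
$$
\psi\colon A_Q \longrightarrow \prod_{\sigma \in \max(Q)} \kr[x_v \mid v \in A_\sigma],
$$
whose $\sigma$-th component sends $x_p$ to $\prod_{v \in A_\sigma,\, v \leqslant p} x_v$ if $p \leqslant \sigma$ and to $0$ otherwise (so $x_{\hat 0} \mapsto 1$, consistently with the dehomogenization). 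To see that $\psi$ is well defined I would check that each relation $x_p x_q - x_{p\wedge q}\sum_{z} x_z$ is killed in every factor: if $p \not\leqslant \sigma$ or $q \not\leqslant \sigma$ all monomials involved map to $0$ in the $\sigma$-th factor, while if $p, q \leqslant \sigma$ then the unique element of $u_Q(p,q)$ below $\sigma$ is the join $p \vee q$ formed in $Q_{\leqslant\sigma}$, and the identity $\bigl(\prod_{v \leqslant p\wedge q} x_v\bigr)\bigl(\prod_{v \leqslant p\vee q} x_v\bigr) = \bigl(\prod_{v \leqslant p} x_v\bigr)\bigl(\prod_{v \leqslant q} x_v\bigr)$, valid in any Boolean algebra, makes the two sides of the relation agree.

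For injectivity I would invoke Stanley's straightening law \cite[\S 3]{StaSP}, which says that $A_Q$ has a $\kr$-basis consisting of the monomials $x_{p_1}^{a_1}\cdots x_{p_k}^{a_k}$ with $\hat 0 < p_1 < \cdots < p_k$ a chain in $Q$ and all $a_i \geqslant 1$. Restricted to the subalgebra generated by $\{x_p \mid p \leqslant \sigma\}$, the map $\psi_\sigma$ is precisely the classical isomorphism $A_{B_{\rk(\sigma)}} \cong \kr[x_v \mid v \in A_\sigma]$, under which distinct chain monomials go to distinct ordinary monomials. Hence, given $0 \neq f \in A_Q$, I would pick a chain monomial $\mu_0$ occurring in $f$ with nonzero coefficient, choose $\sigma \in \max(Q)$ above the top element of $\mu_0$, and observe that the chain monomials occurring in $f$ whose top element is $\leqslant \sigma$ are sent by $\psi_\sigma$ to pairwise distinct monomials of $\kr[x_v \mid v \in A_\sigma]$ --- one of which is the (nonzero) image of $\mu_0$; thus $\psi_\sigma(f) \neq 0$. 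Therefore $\psi$ is injective, so $A_Q$ embeds into a reduced ring and is itself reduced, and consequently $I_{\mathfrak{a}}^P$ is radical.

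I expect the injectivity of $\psi$ to be the main obstacle, since it hinges on Stanley's explicit monomial basis of $A_Q$. One could bypass this by a Gr\"obner-basis argument: show that the relations $x_p x_q - x_{p\wedge q}\sum_z x_z$, together with the monomials $x_p$ for $p \notin \mathfrak{a}$, form a Gr\"obner basis for a term order refining rank --- for instance the degree reverse lexicographic order with variables ordered by decreasing rank, for which the leading term of each relation is the squarefree monomial $x_p x_q$ --- so that the initial ideal of $I_{\mathfrak{a}}^P$ is the squarefree monomial ideal $(x_p x_q \mid p, q \in \mathfrak{a}\text{ incomparable}) + (x_p \mid p \notin \mathfrak{a})$; since an ideal whose initial ideal is radical is itself radical, this gives the claim directly.
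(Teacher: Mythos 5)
Your argument is correct, and it reaches the conclusion by a route that overlaps with the paper's only in its first step. Like the paper, you begin by discarding the variables $x_p$ with $p\notin\mathfrak a$ and identifying the quotient with the face ring of $\mathfrak a$, viewed as a simplicial poset in its own right (the paper does this on the homogeneous level, $\widetilde S/\widetilde{I^P_{\mathfrak a}}\cong\widetilde T/\widetilde{I^{\mathfrak a}_{\mathfrak a}}$). From there the paper is short but indirect: it transfers radicality between the dehomogenized and homogeneous ideals via \cite[Proposition 16.23]{BrunsVetter} and then quotes \cite[Lemma 3.4]{StaSP}, that the homogeneous face ring is an algebra with straightening law, together with the general fact that ASLs are reduced. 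You instead stay with the dehomogenized ring and prove reducedness of $A_Q$ directly, by embedding it into $\prod_{\sigma\in\max(Q)}\kr[x_v\mid v\in A_\sigma]$; your verification of well-definedness (the unique minimal upper bound of $p,q$ below $\sigma$ is the Boolean join in $Q_{\leqslant\sigma}$) and of injectivity (distinct chain monomials below a fixed $\sigma$ have distinct monomial images) are both sound, and the injectivity argument does not need the slightly circular-sounding claim that the subalgebra generated by $\{x_p\mid p\leqslant\sigma\}$ \emph{is} $A_{B_{\rk\sigma}}$ --- only the basis of $A_Q$ and the combinatorial distinctness of images. What your route buys is an explicit, self-contained reason for reducedness that bypasses the Bruns--Vetter dehomogenization step, and it transparently anticipates the later decomposition $I^P_P=\bigcap_i I^P_{(p_i)}$ of Proposition \ref{posetideals} and Theorem \ref{thm:finitePL} (note that you correctly avoid invoking that decomposition, which would be circular since it rests on Lemma \ref{radical}); what it costs is that you still lean on Stanley's straightening law, now in the form of the chain-monomial basis of the \emph{dehomogenized} ring $A_Q$ --- this is indeed available in \cite[\S 3]{StaSP} (it is what Stanley uses to compute the Hilbert series), but strictly speaking it is the dehomogenized shadow of Lemma 3.4, obtained because the graded ring is free over $\kr[x_{\hat 0}]$ on the $\hat 0$-free standard monomials, and it would be worth one sentence acknowledging this. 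Your closing Gr\"obner-basis alternative is plausible (the leading-term computation in degrevlex is right), but the assertion that the given generators form a Gr\"obner basis is itself a nontrivial claim requiring an S-pair check or a citation, so as written it is a sketch rather than a proof; since your main argument does not depend on it, this does not affect correctness.
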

\begin{proof}
By \cite[Proposition 16.23]{BrunsVetter} it is enough to check that $\widetilde{I_{\mathfrak{a}}^P}$ is radical, i.e. $\widetilde{S}/\widetilde{I_{\mathfrak{a}}^P}$ is reduced. Now note that $\widetilde{S}/\widetilde{I_{\mathfrak{a}}^P} \cong \widetilde{T} / \widetilde{I_{\mathfrak{a}}^{\mathfrak{a}}}$, where $\widetilde{T} = \kr[x_p \mid p \in \mathfrak{a}]$. By \cite[Lemma 3.4]{StaSP},  $\widetilde{T} / \widetilde{I_{\mathfrak{a}}^{\mathfrak{a}}}$ is an algebra with straightening law on $\mathfrak{a}$ (seen as a poset on its own) and hence is reduced, as desired.
\end{proof}

\begin{prop} \label{posetideals}
Let $P$ be a finite simplicial poset and let $\mathfrak{a}$, $\mathfrak{b}$ be order ideals of $P$. Then:
\begin{enumerate}
\item $I_{\mathfrak{a}}^P + I_{\mathfrak{b}}^P = I_{\mathfrak{a}\cap\mathfrak{b}}^P$ (and hence the correspondence $\mathfrak{a} \mapsto I_{\mathfrak{a}}^P$ reverses inclusions);
\item $I_{\mathfrak{a}}^P \cap I_{\mathfrak{b}}^P = I_{\mathfrak{a}\cup\mathfrak{b}}^P$.
\end{enumerate}
\end{prop}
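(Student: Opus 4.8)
The plan is to prove both identities by comparing generators of the ideals $\widetilde{I_{\mathfrak{a}}^P}$, $\widetilde{I_{\mathfrak{b}}^P}$ in the homogeneous polynomial ring $\widetilde{S}$, and then invoke exactness of dehomogenization with respect to $x_{\hat 0}$ (which respects sums and intersections of ideals). For part (1), the inclusion $I_{\mathfrak a \cap \mathfrak b}^P \subseteq I_{\mathfrak a}^P + I_{\mathfrak b}^P$ is the easy direction: every generator of the former is either a variable $x_p$ with $p \notin \mathfrak a \cap \mathfrak b$, hence $p \notin \mathfrak a$ or $p \notin \mathfrak b$, so $x_p$ lies in one of the two summands; or a straightening relation $x_p x_q - x_{p\wedge q}\sum_{z \in u_{\mathfrak a \cap \mathfrak b}(p,q)} x_z$ for incomparable $p, q \in \mathfrak a \cap \mathfrak b$. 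For the latter, I would compare $u_{\mathfrak a \cap \mathfrak b}(p,q)$ with $u_{\mathfrak a}(p,q)$: if some minimal upper bound of $p,q$ in $\mathfrak a$ fails to lie in $\mathfrak b$, then its variable already lies in $I_{\mathfrak b}^P$, and one corrects the relation of $\mathfrak a$ by such variables, each a member of $I_{\mathfrak b}^P$, to obtain the relation of $\mathfrak a \cap \mathfrak b$; so the difference of the two straightening relations lies in $I_{\mathfrak a}^P + I_{\mathfrak b}^P$. The reverse inclusion $I_{\mathfrak a}^P + I_{\mathfrak b}^P \subseteq I_{\mathfrak a \cap \mathfrak b}^P$ is handled the same way with the roles swapped: a straightening relation living in $\mathfrak a$ differs from the one living in $\mathfrak a \cap \mathfrak b$ by variables $x_z$ with $z \in u_{\mathfrak a}(p,q) \setminus (\mathfrak a \cap \mathfrak b)$, i.e.\ $z \notin \mathfrak b$ (as $z \in \mathfrak a$ already), and each such $x_z$ is a generator of $I_{\mathfrak b}^P \subseteq I_{\mathfrak a \cap \mathfrak b}^P$; one also needs that incomparable $p,q \in \mathfrak a$ that are not both in $\mathfrak a \cap \mathfrak b$ contribute generators already in $I_{\mathfrak a \cap \mathfrak b}^P$ via the variable-type generators.

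For part (2), the inclusion $I_{\mathfrak a \cup \mathfrak b}^P \subseteq I_{\mathfrak a}^P \cap I_{\mathfrak b}^P$ follows from part (1) applied twice, since $\mathfrak a, \mathfrak b \subseteq \mathfrak a \cup \mathfrak b$ forces $I_{\mathfrak a \cup \mathfrak b}^P \subseteq I_{\mathfrak a}^P$ and $\subseteq I_{\mathfrak b}^P$ by the inclusion-reversing property just established. The substance is the reverse inclusion $I_{\mathfrak a}^P \cap I_{\mathfrak b}^P \subseteq I_{\mathfrak a \cup \mathfrak b}^P$. Here I would argue via quotient rings: there is a surjection $\varphi\colon S/I_{\mathfrak a \cup \mathfrak b}^P \twoheadrightarrow (S/I_{\mathfrak a}^P) \times (S/I_{\mathfrak b}^P)$, and the claim is equivalent to injectivity of $\varphi$, i.e.\ that $I_{\mathfrak a \cup \mathfrak b}^P = I_{\mathfrak a}^P \cap I_{\mathfrak b}^P$ as kernels. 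Equivalently, in the homogeneous setting, $\widetilde S / \widetilde{I_{\mathfrak a \cup \mathfrak b}^P}$ injects into the product of $\widetilde S / \widetilde{I_{\mathfrak a}^P}$ and $\widetilde S / \widetilde{I_{\mathfrak b}^P}$. Because $\widetilde S / \widetilde{I_{\mathfrak a \cup \mathfrak b}^P} \cong \kr[x_p \mid p \in \mathfrak a \cup \mathfrak b]/\widetilde{I_{\mathfrak a \cup \mathfrak b}^{\mathfrak a \cup \mathfrak b}}$ is an algebra with straightening law on the poset $\mathfrak a \cup \mathfrak b$ (by \cite[Lemma 3.4]{StaSP}, as in the proof of Lemma \ref{radical}), it has a $\kr$-basis of standard monomials indexed by chains in $\mathfrak a \cup \mathfrak b$; likewise the two factors have standard-monomial bases indexed by chains in $\mathfrak a$, resp.\ $\mathfrak b$. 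Every chain of $\mathfrak a \cup \mathfrak b$ lies entirely in $\mathfrak a$ or entirely in $\mathfrak b$ (since both are lower order ideals and a chain has a maximum element, which determines which ideal contains the whole chain). Tracking a standard monomial through $\varphi$ to the appropriate factor shows that distinct standard basis elements stay $\kr$-linearly independent in the product, giving injectivity.

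\textbf{Main obstacle.} The routine bookkeeping is in part (1): one must be careful that when passing between $u_{\mathfrak a}(p,q)$ and $u_{\mathfrak a \cap \mathfrak b}(p,q)$ the \emph{minimality} of upper bounds is handled correctly — an element minimal in $u_{\mathfrak a}(p,q)$ need not be minimal in $\mathfrak a \cap \mathfrak b$, and conversely — but since every $x_z$ with $z$ in the symmetric difference lies in the relevant ideal of variable-type generators, this is absorbed without genuine difficulty. The real content, and the step I expect to require the most care, is the reverse inclusion in part (2): identifying $\widetilde S / \widetilde{I_{\mathfrak a \cup \mathfrak b}^P}$ with an algebra with straightening law on $\mathfrak a \cup \mathfrak b$, citing the standard-monomial basis, and verifying that the chain-decomposition argument genuinely certifies injectivity of $\varphi$ rather than merely surjectivity. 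An alternative, if one prefers to avoid ASL machinery a second time, is to verify $I_{\mathfrak a}^P \cap I_{\mathfrak b}^P \subseteq I_{\mathfrak a \cup \mathfrak b}^P$ by a direct normal-form argument: reduce an element of the intersection modulo $I_{\mathfrak a \cup \mathfrak b}^P$ to a $\kr$-combination of standard monomials on chains of $\mathfrak a \cup \mathfrak b$, split these according to which of $\mathfrak a$, $\mathfrak b$ contains the chain, and use membership in each $I_{\mathfrak a}^P$ and $I_{\mathfrak b}^P$ to force all coefficients to vanish. Either route closes the proof.
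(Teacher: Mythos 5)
Your proof is correct, and part (1) is essentially the paper's argument: a direct comparison of generators, where your worry about minimality dissolves because $\mathfrak b$ is a down-set, so that in fact $u_{\mathfrak a \cap \mathfrak b}(p,q) = u_{\mathfrak a}(p,q)\cap \mathfrak b$ and the two straightening relations differ exactly by $y_{p\wedge q}\sum_{z} x_z$ over $z\in u_{\mathfrak a}(p,q)\setminus\mathfrak b$, a combination of variable-type generators. For part (2), however, you take a genuinely different route. The paper reduces to the identity $I_{\mathfrak a}^P=\bigcap_{q\in\mathfrak a}I_{(q)}^P$ and proves it by combining Lemma \ref{radical} (the ideals are radical, via the ASL property of \cite[Lemma 3.4]{StaSP}) with an induction that peels off maximal elements: two distinct maximal elements $M,M'$ of $\mathfrak a$ have no common upper bound in $\mathfrak a$, so $x_Mx_{M'}\in I^P_{\mathfrak a}$, forcing any prime over $I^P_{\mathfrak a}$ to contain, say, $x_M$ and hence $I^P_{\mathfrak a}+(x_M)=I^P_{\mathfrak a\setminus\{M\}}$; part (2) then follows formally, and the argument yields the prime decomposition $I_P^P=\bigcap_i I^P_{(p_i)}$ as a byproduct (which the paper remarks on after Theorem \ref{thm:finitePL}). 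You instead use the ASL's standard-monomial basis directly, together with the observation that every (multi)chain in $\mathfrak a\cup\mathfrak b$ lies entirely in $\mathfrak a$ or entirely in $\mathfrak b$; this is a valid and arguably more elementary argument (no primes, no radicality), resting on the same input from \cite{StaSP}. Two small points to tighten: the map $\varphi$ into the product of quotients is in general not surjective (the ideals need not be comaximal), but only its injectivity is used, so nothing breaks; and if you carry out the standard-monomial argument in $\widetilde S$ you should not simply assert that dehomogenization commutes with intersections of ideals -- it is cleaner to run the argument directly in $S$, using that the images of standard monomials on multichains avoiding $\hat 0$ form a $\kr$-basis of each dehomogenized face ring.
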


\begin{proof}
For brevity's sake, given elements $p$ and $q$ in $P$, we will use the symbol $y_{p \wedge q}$ defined in the following way:
\[y_{p \wedge q} := \begin{cases}1 & \text{if } p \wedge q = \hat 0 \\
x_{p \wedge q} & \text{otherwise.}\end{cases}\]

We now proceed to the proof.

\begin{enumerate}
\item By definition, \[\begin{split}I_{\mathfrak{a}}^P + I_{\mathfrak{b}}^P &= (x_p \mid p \notin \mathfrak{a}) + (x_px_q - y_{p \wedge q}\sum_{z \in u_{\mathfrak{a}}(p,q)}x_z \mid p, q \in \mathfrak{a}, \ p, q \text{ incomparable})\\ &+ (x_p \mid p \notin \mathfrak{b}) + (x_px_q - y_{p \wedge q}\sum_{z \in u_{\mathfrak{b}}(p,q)}x_z \mid p, q \in \mathfrak{b}, \ p, q \text{ incomparable}) \\ &= (x_p \mid p \notin \mathfrak{a} \cap \mathfrak{b}) + (x_px_q - y_{p \wedge q}\sum_{z \in u_{\mathfrak{a}}(p,q)}x_z \mid p, q \in \mathfrak{a}, \ p, q \text{ incomparable}) \\ &+ (x_px_q - y_{p \wedge q}\sum_{z \in u_{\mathfrak{b}}(p,q)}x_z \mid p, q \in \mathfrak{b}, \ p, q \text{ incomparable}).\end{split}\]

Now pick two incomparable elements $p$, $q$ in $\mathfrak{a}$ and consider the generator $x_px_q - y_{p \wedge q}\sum_{z \in u_{\mathfrak{a}}(p,q)}x_z$.
\begin{itemize}
\item If at least one of $p$ and $q$ does not lie in $\mathfrak{b}$, then the whole generator is superfluous.
\item If both $p$ and $q$ lie in $\mathfrak{b}$, one rewrites the generator as
\[x_px_q - y_{p \wedge q}\sum_{z \in u_{\mathfrak{a} \cap \mathfrak{b}}(p,q)}x_z,\]
since any $x_z$ corresponding to an upper bound in $\mathfrak{a} \setminus \mathfrak{b}$ of $p$ and $q$ is superfluous.
\end{itemize}
The claim now follows.

\item It is enough to prove that $I_{\mathfrak{a}}^P = \bigcap_{q \in \mathfrak{a}}I_{(q)}^P$, where $(q)$ denotes the principal order ideal generated by the element $q$. Note that, by part (1), $I_{(q)}^P\supseteq I_{\mathfrak{a}}^P$ for any $q \in \mathfrak{a}$. Moreover, for $\mathfrak{a} = \varnothing$ the claim holds trivially (taking the empty intersection of ideals to be the ring $S$). Let then $\mathfrak{a}$ be nonempty.

Since by Lemma \ref{radical} $I_{\mathfrak{a}}^P$ is radical, it is enough to prove that, whenever a prime ideal $\wp$ in $S$ contains $I_{\mathfrak{a}}^P$, then it also contains the prime ideal $I_{(q)}^P$ for some $q \in \mathfrak{a}$.

To prove this, consider the (nonempty) set of maximal elements in $\mathfrak{a}$. There are two cases:
\begin{enumerate}
\item[(i)] There is exactly one maximal element $M$ in $\mathfrak{a}$. In this case, $\wp \supseteq I_{\mathfrak{a}}^P = I_{(M)}^P$ and we are done.
\item[(ii)] There are at least two maximal elements $M$, $M'$ in $\mathfrak{a}$. In this case the monomial $x_Mx_{M'}$ must belong to $I_{\mathfrak{a}}^P$ and hence the prime ideal $\wp$ is forced to contain at least one of $x_M$ and $x_{M'}$. Without loss of generality, say $x_M \in \wp$. Then $\wp \supseteq I_{\mathfrak{a}}^P + (x_M) = I_{{\mathfrak{a}}\setminus\{M\}}^P$. 
\end{enumerate}

Since $P$ is finite and case (ii) gives us a reduction from $\mathfrak{a}$ to a strictly smaller order ideal $\mathfrak{a} \setminus \{M\}$, we are bound to meet case (i) at some point. Notice that the element we eventually find in case (i) will not in general be maximal in the original order ideal $\mathfrak{a}$.

\end{enumerate}
\end{proof}

\begin{cor}
The ideals $I_{\mathfrak{a}}^P$ in $S$, ordered by inclusion, form a distributive lattice 
 with respect to sum and intersection.
\end{cor}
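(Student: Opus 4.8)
The plan is to deduce the statement from Proposition~\ref{posetideals} together with two elementary facts: that the down-sets of a finite poset form a distributive lattice, and that a homomorphic image of a distributive lattice is again distributive.

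Write $\mathcal{L} := \{I_{\mathfrak{a}}^P \mid \mathfrak{a} \text{ a lower order ideal of } P\}$. I would first check that $(\mathcal{L},\subseteq)$ is a lattice in which the join is the sum of ideals and the meet is intersection. By Proposition~\ref{posetideals} we have $I_{\mathfrak{a}}^P + I_{\mathfrak{b}}^P = I_{\mathfrak{a}\cap\mathfrak{b}}^P \in \mathcal{L}$ and $I_{\mathfrak{a}}^P \cap I_{\mathfrak{b}}^P = I_{\mathfrak{a}\cup\mathfrak{b}}^P \in \mathcal{L}$, so $\mathcal{L}$ is closed under both operations; and for any family of ideals closed under sum and intersection, the sum of two members is visibly their least upper bound inside the family and the intersection their greatest lower bound.

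Next, recall that the set $J(P)$ of lower order ideals of $P$, ordered by inclusion, is closed under union and intersection inside the Boolean lattice $2^{P}$, hence is a distributive sublattice of it. Consider $\varphi\colon J(P)\to\mathcal{L}$, $\mathfrak{a}\mapsto I_{\mathfrak{a}}^P$. It is surjective by the definition of $\mathcal{L}$, it reverses inclusions by Proposition~\ref{posetideals}(1), and Proposition~\ref{posetideals} says it carries $\cup$ to $\cap$ and $\cap$ to $+$. Viewing $\mathcal{L}$ with the \emph{reversed} order --- for which the meet is $+$ and the join is $\cap$ --- the map $\varphi$ thus becomes a surjective homomorphism of lattices from the distributive lattice $J(P)$. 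Picking preimages of any three elements of $\mathcal{L}$ and transporting the distributive identity through $\varphi$ shows that $(\mathcal{L},\supseteq)$ is distributive, and since distributivity of a lattice is a self-dual property, $(\mathcal{L},\subseteq)$ is distributive as well, which is the claim.

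I do not expect a genuine obstacle; the only thing to keep straight is that reversing the order on $\mathcal{L}$ interchanges join and meet (the sum of two ideals is the larger one, hence the join for $\subseteq$ and the meet for $\supseteq$), and the observation that injectivity of $\varphi$ is not needed for the distributivity conclusion. If one wanted the stronger assertion that $\mathcal{L}$ is anti-isomorphic to $J(P)$, the extra input would be that $x_p\in I_{\mathfrak{a}}^P$ if and only if $p\notin\mathfrak{a}$; the nontrivial direction follows by identifying $S/I_{\mathfrak{a}}^P$ with Stanley's ring of the simplicial poset $\mathfrak{a}$ (as in the proof of Lemma~\ref{radical}), in which every $x_p$ with $p\in\mathfrak{a}$ survives as a nonzero standard monomial.
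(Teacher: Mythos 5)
Your argument is correct and is exactly the paper's intended proof, just written out in full: the paper simply cites Proposition~\ref{posetideals} together with the fact that order ideals of $P$ form a distributive lattice under union and intersection, which is precisely the surjective (anti-)homomorphism argument you spell out. Your remarks that injectivity is not needed for distributivity, and how one would upgrade to an anti-isomorphism, are correct but beyond what the paper claims.
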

\begin{proof}
This is a direct consequence of Proposition \ref{posetideals}, since order ideals of $P$ form a distributive lattice with respect to intersection and union \cite[\S 3.4]{Sta}.
\end{proof}

\begin{thm}\label{thm:finitePL}
For every finite simplicial poset $P$,
$$
\mathcal R(P) = A_P,
$$
i.e., we recover Stanley's ring associated to $P$.
\end{thm}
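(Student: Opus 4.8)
The plan is to show that the global sections ring $\mathcal R(P) = \Gamma(Y(P))$ coincides with Stanley's ring $A_P = S/I_P^P$ by exhibiting mutually inverse ring homomorphisms. The key structural fact I would exploit is Proposition \ref{posetideals}: the order ideals $\downset{\tau}$ for $\emptyset \neq \tau \subseteq \max(P)$ are precisely the order ideals obtained as intersections of principal order ideals generated by maximal elements, and since every order ideal of a finite poset is such an intersection (as $P$ is finite, every element lies below a maximal one and $\mathfrak a = \bigcup_{p \in \mathfrak a} (p)$ with the complementary lattice behaviour handled by part (2)), the ideals $I_{\downset\tau}^P$ form a sublattice of the $I_{\mathfrak a}^P$. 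More to the point, $I_P^P = I_{\downset\tau}^P$ summed appropriately, and by Proposition \ref{posetideals}(1) each projection $S/I_P^P \twoheadrightarrow S/I_{\downset\tau}^P$ is a quotient map; one should first identify $R_{\downset\tau}$ with $S/I_{\downset\tau}^P$ (this is exactly Stanley's statement that for the poset of faces of a simplicial complex his ring is the Stanley–Reisner ring, applied to the simplicial complex whose face poset is $\downset\tau$).

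First I would set up the comparison map $\Phi\colon A_P \to \mathcal R(P)$. An element of $A_P$ is represented by a polynomial in $S = \kr[x_p \mid p \in P]/(x_{\hat 0}-1)$; for each $\downset\tau \in X(P)$ compose the reduction $S \to S/I_{\downset\tau}^P \cong R_{\downset\tau}$ to get a component, and check via Proposition \ref{posetideals}(1) that these components are compatible with the restriction maps $\pi^{\downset\tau}_{\downset{\tau'}}$ of the sheaf $Y(P)$ — this is immediate because $\downset\tau \leq_{X(P)} \downset{\tau'}$ means $\downset{\tau} \supseteq \downset{\tau'}$, hence $I_{\downset\tau}^P \subseteq I_{\downset{\tau'}}^P$, and the two reductions factor as claimed. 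So $\Phi$ lands in $\Gamma(Y(P))$. For injectivity of $\Phi$: if a polynomial maps to $0$ in every $R_{\downset\tau}$, then it lies in $\bigcap_{\tau} I_{\downset\tau}^P$; but by Proposition \ref{posetideals}(2) iterated over all maximal elements (and the reduction argument in the proof of that proposition, which shows $I_P^P = \bigcap_{q \in P} I_{(q)}^P$ — here every singleton $\{q\}$ with $q$ maximal gives $\downset{\{q\}}$, and for general $q$ one reduces to maximal elements), this intersection is exactly $I_P^P$, so the polynomial is $0$ in $A_P$.

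The main work is surjectivity of $\Phi$, i.e.\ showing every global section of $Y(P)$ comes from a single polynomial modulo $I_P^P$. Here is where I expect the real obstacle. A global section $q = (q_{\downset\tau})_{\downset\tau}$ is a compatible family; I want to produce a polynomial $F \in S$ reducing to each $q_{\downset\tau}$. The natural strategy is to work one maximal element at a time: consider the components $q_\sigma \in R_{\downset{\{\sigma\}}} = \kr[x_v \mid v \in A(P), v \leq \sigma]/(\text{nonface monomials})$ for $\sigma \in \max(P)$, and note that Lemma \ref{supportlemma} says monomials supported at a common lower bound $p \leq \sigma, \sigma'$ have matching coefficients in $q_\sigma$ and $q_{\sigma'}$. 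One then assembles $F$ as a $\kr$-linear combination of monomials: for each face $p \in P$ pick the monomials supported at $p$ but not at any proper sub-face, read off their coefficient from any $q_\sigma$ with $\sigma \geq p$ (well-defined by Lemma \ref{supportlemma} applied along a chain of maximal elements — one must check the relevant pairs $\sigma, \sigma'$ always share the needed lower bound, which follows because $p$ itself is a common lower bound), and sum. The delicate point is verifying that this $F$, which a priori only matches the $q_\sigma$ at the \emph{minimal} elements of $X(P)$, in fact reduces correctly to $q_{\downset\tau}$ at \emph{every} $\downset\tau$: this should follow from the sheaf axiom, since $Y(P)$ is a sheaf on the Alexandrov space $X(P)$ and each $\downset\tau$ is below the minimal elements $\downset{\{\sigma\}}$ for $\sigma \in \tau$, so $q_{\downset\tau}$ is determined by the $q_\sigma$ with $\sigma \in \tau$ via the injectivity of $R_{\downset\tau} \hookrightarrow \prod_{\sigma \in \tau} R_{\downset{\{\sigma\}}}$ (this injectivity is itself Proposition \ref{posetideals}(2): $I_{\downset\tau}^P = \bigcap_{\sigma \in \tau} I_{\downset{\{\sigma\}}}^P$). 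Once $\Phi$ is well-defined, injective, and surjective, and one checks it is a ring homomorphism (clear from the construction as a product of quotient maps), the proof is complete; I would present the ring-homomorphism and well-definedness checks as routine and concentrate the exposition on the assembly of $F$ and the use of Lemma \ref{supportlemma} and Proposition \ref{posetideals}.
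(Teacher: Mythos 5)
Your setup of the comparison map $\Phi\colon A_P\to\Gamma(Y(P))$ and your injectivity argument are sound and match the computation the paper actually relies on: Proposition \ref{posetideals} gives $\bigcap_{\sigma\in\max(P)} I_{(\sigma)}^P=I_P^P$, and since $I_{\downset\tau}^P=\sum_{\sigma\in\tau}I_{(\sigma)}^P$ the kernel of $\Phi$ is exactly $I_P^P$. The paper, however, does not prove surjectivity by hand: it invokes a general description of the global sections of such a sheaf of quotient rings (\cite[Example 3.3]{BBR}), which yields $\Gamma(Y(P))=S/\bigcap_i I_{(p_i)}^P$ directly, and then applies Proposition \ref{posetideals}. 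Surjectivity is precisely where your argument has a genuine gap.

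The problem is that the polynomial $F$ you assemble lies in the subring of $A_P$ generated by the atom variables $x_v$, $v\in A(P)$, and for a simplicial poset that is not a simplicial complex this subring is proper, so your construction cannot reach all global sections. Concretely, in the poset of Figure \ref{simplicialposet_example} the atoms $b,c$ have two minimal upper bounds $\ell_2$ and $\ell_4$, and the element $x_{\ell_2}\in A_P$ maps to the global section whose component at $\downset{\{T_1\}}$ is the monomial $x_bx_c$ and whose component at $\downset{\{\ell_4\}}$ is $0$; this tuple is compatible because $x_bx_c$ dies in $R_{\downset{\{T_1,\ell_4\}}}=\kr[b,c]/(bc)$. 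Since $\{b,c\}$ is a face of both $P_{\leq T_1}$ and $P_{\leq\ell_4}$, any polynomial in atom variables restricts to these two components with the \emph{same} coefficient of $x_bx_c$, so this section is not in the image of your $F$. Relatedly, your coefficient extraction is ill-defined: the monomial $x_bx_c$ is minimally supported at both $\ell_2$ and $\ell_4$, the corresponding choices of $\sigma$ ($T_1$ versus $\ell_4$) give different coefficients for the section above, and Lemma \ref{supportlemma} imposes no constraint here because $T_1$ and $\ell_4$ have no common lower bound at which $x_bx_c$ is supported. A correct hand-made surjectivity proof must produce preimages involving the variables $x_p$ for non-atomic $p$ (e.g.\ via the multichain monomial basis coming from the ASL structure of $A_P$), which is exactly the content the paper outsources to \cite{BBR}.
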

\begin{proof}

Let  $\{p_1, \ldots, p_k\}$ denote the maximal elements of $P$. The sheaf $Y(P)$ on $X(P)$ satisfies the hypotheses of \cite[Example 3.3]{BBR} and hence we get a full description for the global sections of $Y(P)$:
\[
\Gamma(Y(P)) = S / \bigcap_{i=1}^k I_{(p_i)}^P = S / I_P^P,
\]
where the last equality comes from Proposition \ref{posetideals}. 
\end{proof}

\begin{rem}
After completing our work, we became aware of work of L\"u and Panov \cite{LP} from which our Theorem \ref{thm:finitePL}  follows. 
Moreover, notice that  Brun and R\"omer \cite{BrunRoemer} prove the analogous statement for $\widetilde{A_P}$. 

Notice that our proof also produces a minimal prime ideal decomposition of $I_P^P$ into the $I^P_{(p_i)}$.
\end{rem}


\section{Induced actions and invariant rings}\label{sec:IA}

We now want to bring group actions into the picture, proving that every group action on a simplicial poset induces an action on the associated ring. Moreover, if the action is translative we will prove that the ring of invariants is isomorphic to the ring associated to the quotient poset.

\subsection{The induced action on $\mathcal R (P)$} Consider an action $G\circlearrowright P$ of a group $G$ by automorphisms of $P$. Given $g \in G$, let us define $\omega^g$ as the automorphism of $\kr[x_v \mid v \in A(P)]$ obtained by sending $x_v$ into $x_{gv}$. Given a nonempty collection $\tau \subseteq \mathrm{max}(P)$, let $g\tau := \{gt \mid t \in \tau\}$. One has that the assignment $p \mapsto gp$ induces an isomorphism of (posets of faces of) simplicial complexes $\downset{\tau} \xrightarrow{\cong} \downset{g\tau}$. Hence, $\omega^g$ induces a ring isomorphism $\omega^g\colon R_{\downset{\tau}} \xrightarrow{\cong} R_{\downset{g\tau}}$ between the corresponding Stanley-Reisner rings. Moreover, if $\tau \leqslant_{X(P)} \tau'$, then the following diagram
\begin{equation}\label{goodrestriction}
\begin{tikzcd}
R_{\downset{\tau}} \arrow{r}{\omega^g} \arrow[swap]{d}{\pi^{\downset{\tau}}_{\downset{\tau'}}} & R_{\downset{g\tau}} \arrow{d}{\pi^{\downset{g\tau}}_{\downset{g\tau'}}} \\
R_{\downset{\tau'}} \arrow[swap]{r}{\omega^g} & R_{\downset{g\tau'}}
\end{tikzcd}
\end{equation}

commutes.

\begin{defi}
Consider a simplicial poset $P$ with an action of a group $G$. Given any element $q=(q_{\downset{\tau}})_{\downset{\tau}\in X(P)}$ of $\mathcal R(P)$ and any $g\in G$ define the $X(P)$-tuple $gq$ by
\[gq_{\downset{\tau}} := \omega^g(q_{\downset{g^{-1}\tau}}).\]
\end{defi}

\begin{lem}
For any given action of a group $G$ on a simplicial poset $P$ by automorphisms, the assignment
\[
G\times \mathcal R(P) \to \mathcal R(P),
\quad\quad (g,q) \mapsto gq
\]
defines an action of $G$ by (ring) automorphisms on $\mathcal R(P)$.
\end{lem}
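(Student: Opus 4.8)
The plan is to verify the three defining properties of a group action by ring automorphisms: that $gq$ is a well-defined global section for each $g$ and $q$, that each $g$ acts as a ring homomorphism, and that the assignment respects the group structure (identity acts trivially, and $(gh)q = g(hq)$). Since the $\omega^g$ are already ring isomorphisms on each stalk $R_{\downset{\tau}}$, the only genuine content is showing that applying them componentwise sends a compatible tuple to a compatible tuple.

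First I would check that $gq \in \mathcal{R}(P) = \Gamma(Y(P))$. By definition a global section is an $X(P)$-tuple $(s_{\downset{\tau}})$ with $s_{\downset{\tau}} \in R_{\downset{\tau}}$ such that $\pi^{\downset{\tau}}_{\downset{\tau'}}(s_{\downset{\tau}}) = s_{\downset{\tau'}}$ whenever $\downset{\tau} \leqslant_{X(P)} \downset{\tau'}$. Given $\downset{\tau} \leqslant_{X(P)} \downset{\tau'}$, I apply the commuting square \eqref{goodrestriction} with $g^{-1}$ in place of $g$ (noting $\downset{g^{-1}\tau} \leqslant_{X(P)} \downset{g^{-1}\tau'}$, since the map $p \mapsto g^{-1}p$ reverses nothing and $X(P)$-order is reverse inclusion of order ideals, which is preserved): the square gives $\pi^{\downset{\tau}}_{\downset{\tau'}}(\omega^g(q_{\downset{g^{-1}\tau}})) = \omega^g(\pi^{\downset{g^{-1}\tau}}_{\downset{g^{-1}\tau'}}(q_{\downset{g^{-1}\tau}})) = \omega^g(q_{\downset{g^{-1}\tau'}})$, where the last equality is compatibility of the original section $q$. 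This is exactly $\pi^{\downset{\tau}}_{\downset{\tau'}}(gq_{\downset{\tau}}) = gq_{\downset{\tau'}}$, so $gq$ is a global section.

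Next, each $g$ acts by a ring homomorphism on $\mathcal{R}(P)$: the ring operations on $\Gamma(Y(P))$ are defined componentwise, and on each component $g$ acts via the ring isomorphism $\omega^g\colon R_{\downset{g^{-1}\tau}} \to R_{\downset{\tau}}$, so $g(q + q')$, $g(qq')$ and $g(1)$ are computed component-by-component from ring homomorphisms, hence $q \mapsto gq$ is a ring homomorphism. Finally I would check the group-action axioms. For the identity $e \in G$, $\omega^e$ is the identity and $e^{-1}\tau = \tau$, so $eq = q$. For $g, h \in G$, one computes $(g(hq))_{\downset{\tau}} = \omega^g((hq)_{\downset{g^{-1}\tau}}) = \omega^g(\omega^h(q_{\downset{h^{-1}g^{-1}\tau}})) = \omega^{gh}(q_{\downset{(gh)^{-1}\tau}}) = ((gh)q)_{\downset{\tau}}$, using $\omega^g \circ \omega^h = \omega^{gh}$ (immediate from the defining formula $x_v \mapsto x_{gv}$) and $h^{-1}g^{-1} = (gh)^{-1}$. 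Thus $G \to \operatorname{Aut}(\mathcal{R}(P))$, $g \mapsto (q \mapsto gq)$, is a group homomorphism, i.e.\ an action by ring automorphisms (bijectivity of each $g$ follows since $g^{-1}$ provides an inverse).

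There is no real obstacle here; the only point requiring a moment of care is the bookkeeping with inverses and with the direction of the order relation on $X(P)$ (it is \emph{reverse} inclusion of order ideals), to make sure the commuting square \eqref{goodrestriction} is invoked with the correct group element and in the correct direction. Everything else is a routine componentwise verification, so I would present it concisely.
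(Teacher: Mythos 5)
Your proof is correct and follows essentially the same route as the paper's: the same computation via the commuting square \eqref{goodrestriction} to show $gq$ is again a global section, the same componentwise argument for the ring homomorphism property, and an explicit check (which the paper leaves to the reader) that $g\mapsto\varphi_g$ is a group homomorphism. No issues.
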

\begin{proof}

Let us first check that $gq$ is indeed a global section of $Y(P)$: given $\tau \leqslant_{X(P)} \tau'$, one has that
 
\begin{align*}
\pi^{\downset{\tau}}_{\downset{\tau'}}(gq_{\downset{\tau}}) &= \pi^{\downset{\tau}}_{\downset{\tau'}} \circ \omega^g(q_{\downset{g^{-1}\tau}}) & \\
&= \omega^g \circ \pi^{\downset{g^{-1}\tau}}_{\downset{g^{-1}\tau'}}(q_{\downset{g^{-1}\tau}}) &\text{by the commutativity of \eqref{goodrestriction}}\\
&= \omega^g(q_{\downset{g^{-1}\tau'}}) &\text{since $q$ is a section}\\
&= gq_{\downset{\tau'}}. &
\end{align*}
Hence for every $g$ we have a well-defined map $\varphi_g\colon \mathcal R(P) \to \mathcal R(P)$, $q\mapsto gq$. That $\varphi_g$ is a ring homomorphism follows from the fact that $\omega^g$ is a ring homomorphism and that elements ($X(P)$-tuples) of $\mathcal R(P)$ are added and multiplied componentwise.

Finally, one checks that $G \to \operatorname{Aut}(\mathcal{R}(P))$, $g \mapsto \varphi_g$ is a group homomorphism as desired.

\end{proof}

\subsection{Invariant rings for translative actions} 
From now on we will require that the action of $G$ be translative. Consider $\sigma \in \max(P)$, $\Sigma \in \max(P/G)$ such that $G\sigma = \Sigma$. By Lemma \ref{lem:low}, translativity allows us to define the following ring isomorphism:

\[
\begin{tikzcd}
\hspace{35pt}\kr[x_v \mid v \in A(P) \cap P_{\leqslant \sigma}] \arrow[bend left=10]{rr}{\zeta^{\sigma}_{\Sigma}} & \cong & \kr[x_\mathcal{V} \mid \mathcal{V} \in A(P/G) \cap (P/G)_{\leqslant \Sigma}] \arrow[bend left=10]{ll}{\zeta^{\Sigma}_{\sigma}}
\end{tikzcd}
\]

For each $g \in G$, considering $\omega^g\colon \kr[x_v \mid v \in A(P) \cap P_{\leqslant \sigma}] \xrightarrow{\cong} \kr[x_v \mid v \in A(P) \cap P_{\leqslant g\sigma}] $ yields

\begin{eqnarray}\label{zeta_omega}
\zeta^{g\sigma}_{\Sigma} \circ \omega^g = \zeta^{\sigma}_{\Sigma} & \text{and} & \omega^g \circ \zeta^{\Sigma}_{\sigma} = \zeta^{\Sigma}_{g\sigma}.
\end{eqnarray}

We now have all the ingredients for the main result of the section.
\begin{thm} \label{thm:invariant}
Let $G$ be a group acting translatively on the simplicial poset $P$. Then there is a ring isomorphism $\mathcal R (P)^G \cong \mathcal R(P/G)$.
\end{thm}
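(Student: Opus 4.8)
The plan is to construct the isomorphism $\mathcal R(P)^G \cong \mathcal R(P/G)$ explicitly by transporting sections along the quotient map, using the local isomorphisms $\zeta$ just introduced. Recall that an element of $\mathcal R(P/G)$ is an $X(P/G)$-tuple, and by Theorem \ref{thm:finitePL}-style reasoning (or directly from the sheaf description) it is determined by its components $q_{\downset{\{\Sigma\}}}$ at the minimal elements $\Sigma \in \max(P/G)$, subject to the gluing condition of Lemma \ref{supportlemma}, i.e.\ two such components must agree on monomials supported at any common lower bound. The same is true for $\mathcal R(P)$ with respect to $\max(P)$. So first I would set up the correspondence at the level of maximal elements: for each $\Sigma \in \max(P/G)$ choose (arbitrarily) a representative $\sigma \in \max(P)$ with $G\sigma = \Sigma$, and use $\zeta^\Sigma_\sigma$ to pull the component $q_\Sigma \in R_{\downset{\{\Sigma\}}}$ back to an element of $R_{\downset{\{\sigma\}}}$. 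One then has to propagate this consistently over the whole orbit $G\sigma$ using $\omega^g$; the relation $\omega^g \circ \zeta^\Sigma_\sigma = \zeta^\Sigma_{g\sigma}$ from \eqref{zeta_omega} guarantees that the result is independent of which $g$ is used to move within the orbit, so the assignment is well-defined and automatically $G$-invariant.

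Next I would check that the tuple $(\,\zeta^\Sigma_{\sigma}(q_\Sigma)\,)_\sigma$ thus produced genuinely defines a global section of $Y(P)$, i.e.\ satisfies the gluing/compatibility conditions along all of $X(P)$. By Lemma \ref{supportlemma} it suffices to verify that for any two maximal elements $\sigma,\sigma'$ of $P$ and any common lower bound $p \le \sigma, \sigma'$, the monomials supported at $p$ carry the same coefficient in the two transported polynomials. Here one uses that $f$ restricted to $P_{\le \sigma}$ is an isomorphism onto $(P/G)_{\le G\sigma}$ (Lemma \ref{lem:low}), so monomials supported at $p$ correspond bijectively to monomials supported at $Gp$ under $\zeta$; since $q$ is a section of $Y(P/G)$ and $Gp \le G\sigma = \Sigma$ and $Gp \le G\sigma' = \Sigma'$ with possibly $\Sigma \ne \Sigma'$, the coefficients match downstairs by Lemma \ref{supportlemma} applied to $P/G$, hence also upstairs. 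Conversely, given a $G$-invariant section $q \in \mathcal R(P)^G$, I would define its image in $\mathcal R(P/G)$ by $q_\Sigma := \zeta^\sigma_\Sigma(q_\sigma)$, which by the first relation in \eqref{zeta_omega} combined with $G$-invariance ($q_{g\sigma} = \omega^g(q_\sigma)$) does not depend on the chosen representative $\sigma$; a symmetric gluing check shows this is a section of $Y(P/G)$.

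Finally I would verify that these two assignments are mutually inverse ring homomorphisms. That each is a ring homomorphism is immediate because all the maps $\zeta$, $\zeta^{-1}$, $\omega^g$ are ring isomorphisms and sections are added and multiplied componentwise; that they are inverse to one another follows from $\zeta^\Sigma_\sigma$ and $\zeta^\sigma_\Sigma$ being inverse isomorphisms. I expect the main obstacle to be the bookkeeping in the well-definedness/section checks: one must be careful that the transported data is independent of \emph{all} the arbitrary choices (representatives $\sigma$ of orbits, and elements $g$ realizing $G\sigma = \Sigma$), and that the compatibility maps $\pi^{\downset\tau}_{\downset{\tau'}}$ intertwine correctly with $\zeta$ over non-minimal elements of $X(P/G)$ — this is where translativity is doing the real work, via Lemma \ref{lem:low} and the commuting square \eqref{goodrestriction}, and it is worth isolating the statement ``$f$ induces an isomorphism $X(P/G) \to X(P)/G$ of posets compatible with the sheaves $Y$'' as the technical heart, after which the theorem is a formal consequence of taking global sections.
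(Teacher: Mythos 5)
Your proposal is correct and follows essentially the same route as the paper's proof: transport components with the isomorphisms $\zeta$, use Lemma \ref{supportlemma} together with \eqref{zeta_omega} and \eqref{goodrestriction} to verify independence of all choices and $G$-invariance, and observe that the two transports are mutually inverse ring homomorphisms. The only differences are cosmetic: you reduce to the components at the minimal elements of $X(P)$, which requires the (easy, but unstated) converse of Lemma \ref{supportlemma} that pairwise coefficient agreement at common lower bounds suffices to reconstruct a global section, whereas the paper defines both maps on every component of the tuple directly; and your closing suggestion to base the proof on an isomorphism $X(P/G)\cong X(P)/G$ compatible with the sheaves is neither needed nor obviously true, so it is best left out.
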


\begin{rem}\label{rem:R} 
A result in the same vein as
 Theorem \ref{thm:invariant} was proved by Reiner \cite[Theorem 2.3.1]{ReinerThesis} for label-preserving group actions on finite balanced simplicial complexes. Such actions are translative in our sense, hence we recover Reiner's theorem in this case.
\end{rem}

\begin{proof} We will define two mutually inverse ring homomorphisms $\varphi$ and $\psi$ between $\mathcal R(P)^G=\Gamma(Y(P))^G $ and $ \mathcal R(P/G) = \Gamma(Y(P/G))$.

\smallskip
\noindent {\em 1. Definition of $\varphi$.}
Let $\varphi\colon \Gamma(Y(P))^G \to \Gamma(Y(P/G))$ be the map of rings defined for each $q \in \Gamma(Y(P))^G$ and $\downset{T} \in X(P/G)$ by

\[(\varphi(q))_{\downset{T}} := \pi^{\Sigma}_{\downset{T}} \circ \zeta^{\sigma}_{\Sigma}(q_{\sigma}),\]

where $\Sigma \in \mathrm{max}(P/G)$ is such that $\downset{T} \subseteq (P/G)_{\leq \Sigma}$ (in other words, $\Sigma$ is any minimal element of $X(P/G)$ lying below $\downset{T}$) and $\sigma \in f^{-1}(\Sigma)$.

\smallskip
\noindent {\em 2. The map $\varphi$ is well-defined.}
First of all, once $\Sigma$ is fixed, due to the $G$-invariance of $q$ it makes no difference which representative $\sigma$ we pick inside $f^{-1}(\Sigma)$. More precisely, one has that

\begin{align*}
\zeta^{g\sigma}_{\Sigma}(q_{g\sigma}) &= \zeta^{g\sigma}_{\Sigma} \circ \omega^g(q_{\sigma}) & \text{since $q$ is $G$-invariant}\\
&= \zeta^{\sigma}_{\Sigma}(q_{\sigma}) & \text{due to \eqref{zeta_omega}}. 
\end{align*}

Let us now check that the definition of $\varphi$ is independent on the choice of $\Sigma$. Let us pick $(\Sigma, \sigma), (\Sigma', \sigma')$ as in the definition above and let us show that 

\begin{equation}\label{varphi_welldef_claim}
\pi^{\Sigma}_{\downset{T}} \circ \zeta^{\sigma}_{\Sigma}(q_{\sigma}) = \pi^{\Sigma'}_{\downset{T}} \circ \zeta^{\sigma'}_{\Sigma'}(q_{\sigma'}).
\end{equation}

To do this, it is enough to check that any nonzero monomial in $R_{\downset{T}}$ appears with the same coefficient on both sides of \eqref{varphi_welldef_claim}. We will use the notation $\langle m, f \rangle$ to denote the coefficient of the monomial $m$ in the polynomial $f$. Let us fix a nonzero monomial $M$ in $R_{\downset{T}}$. By definition, $M$ must be supported at an element $\Upsilon \in \downset{T}$. By construction, one has that $\Upsilon \leqslant_{P/G} \Sigma$ and $\Upsilon \leqslant_{P/G} \Sigma'$. One can now choose $\upsilon  \in f^{-1}(\Upsilon)$ such that $\upsilon \leqslant_P \sigma$ and $\upsilon \leqslant_P g\sigma'$ for some $g \in G$. Note that, by construction, $\zeta^{\Sigma}_{\sigma}M$ and $\zeta^{\Sigma'}_{g\sigma'}M$ represent the same monomial (supported at $\upsilon$), which we will denote by $m$. By Lemma \ref{supportlemma} we then have that $\langle m, q_{\sigma}\rangle = \langle m, q_{g\sigma'}\rangle$. We now get the desired result, since

\begin{align*}
    \langle M, \pi^{\Sigma}_{\downset{T}} \circ \zeta^{\sigma}_{\Sigma}(q_{\sigma}) \rangle &= \langle M, \zeta^{\sigma}_{\Sigma}(q_{\sigma}) \rangle = \langle m, q_{\sigma} \rangle = \langle m, q_{g\sigma'} \rangle \\&= \langle M, \zeta^{g\sigma'}_{\Sigma'}(q_{g\sigma'})\rangle = \langle M, \zeta^{\sigma'}_{\Sigma'}(q_{\sigma'})\rangle = \langle M, \pi^{\Sigma'}_{\downset{T}} \circ \zeta^{\sigma'}_{\Sigma'}(q_{\sigma'})\rangle.
\end{align*}

Since the restriction maps behave well, one has that $\varphi(q)$ is indeed a global section of $Y(P/G)$ and thus $\varphi$ is well-defined. \hfill $\triangle$

\smallskip
\noindent {\em 3. Definition of $\psi$.}
Let $$\psi\colon \Gamma(Y(P/G)) \to \Gamma(Y(P))^G$$ be the map of rings defined for each $Q \in \Gamma(Y(P/G))$ and $\downset{\tau} \in X(P)$ by

\[(\psi(Q))_{\downset{\tau}} := \pi^{\sigma}_{\downset{\tau}} \circ \zeta^{G\sigma}_{\sigma}(Q_{G\sigma}),\]
where $\sigma \in \mathrm{max}(P)$ is such that $\downset{\tau} \subseteq P_{\leq \sigma}$.

\smallskip
\noindent {\em 4. The map $\psi$ is well-defined.}
We need to check that $\psi$ is independent on the choice of $\sigma$, i.e.~that, given $\sigma$ and $\sigma'$ as above,

\begin{equation} \label{psi_welldef_claim}
\pi^{\sigma}_{\downset{\tau}} \circ \zeta^{G\sigma}_{\sigma}(Q_{G\sigma}) = \pi^{\sigma'}_{\downset{\tau}} \circ \zeta^{G\sigma'}_{\sigma'}(Q_{G\sigma'}).
\end{equation}

Let us consider a nonzero monomial $m$ in $R_{\downset{\tau}}$ supported at $p \in \downset{\tau}$. Note that $\zeta^{\sigma}_{G\sigma}m$ and $\zeta^{\sigma'}_{G\sigma'}m$ represent the same monomial (supported at $Gp$), which we will denote by $M$. Since $Gp \leqslant_{P/G} G\sigma, G\sigma'$, by Lemma \ref{supportlemma} one has that $\langle M, Q_{G\sigma} \rangle = \langle M, Q_{G\sigma'}\rangle$. This leads us to the desired result, since

\begin{align*}
    \langle m, \pi^{\sigma}_{\downset{\tau}} \circ \zeta^{G\sigma}_{\sigma}(Q_{G\sigma})\rangle &= \langle m, \zeta^{G\sigma}_{\sigma}(Q_{G\sigma})\rangle = \langle M, Q_{G\sigma}\rangle = \langle M, Q_{G\sigma'} \rangle \\&= \langle m, \zeta^{G\sigma'}_{\sigma'}(Q_{G\sigma'})\rangle = \langle m, \pi^{\sigma'}_{\downset{\tau}} \circ \zeta^{G\sigma'}_{\sigma'}(Q_{G\sigma'})\rangle.
\end{align*}

Again, $\psi(Q)$ is a global section of $Y(P)$ since restriction maps behave well. We still need to check that $\psi(Q)$ is $G$-invariant. This is indeed the case, since for each $Q \in \Gamma(Y(P/G))$, $\downset{\tau} \in X(P)$, and $g \in G$ one has that

\begin{align*}
    g\psi(Q)_{\downset{g\tau}} &= \omega^g \circ \psi(Q)_{\downset{\tau}} & \\
    &=\omega^g \circ \pi^{\sigma}_{\downset{\tau}} \circ \zeta^{G\sigma}_{\sigma}(Q_{G\sigma}) & \\
    &= \pi^{g\sigma}_{\downset{g\tau}} \circ \omega^g \circ \zeta^{G\sigma}_{\sigma}(Q_{G\sigma}) & \text{by the commutativity of \eqref{goodrestriction}} \\
    &= \pi^{g\sigma}_{\downset{g\tau}} \circ \zeta^{G\sigma}_{g\sigma}(Q_{G\sigma}) & \text{due to \eqref{zeta_omega}} \\
    &= \psi(Q)_{\downset{g\tau}}. & 
\end{align*}

It follows that $\psi$ is well-defined. \hfill $\triangle$

\smallskip

\noindent {\em 5. $\varphi$ and $\psi$ are inverses.} Finally, it is easy to see that $\varphi$ and $\psi$ are inverse to each other. Given $q \in \Gamma(Y(P))^G$, for every $\downset{\tau} \in X(P)$ one has that

\begin{align*}
\psi(\varphi(q))_{\downset{\tau}} &= \pi^{\sigma}_{\downset{\tau}} \circ \zeta^{G\sigma}_{\sigma}(\varphi(q)_{G\sigma}) & \\ &= \pi^{\sigma}_{\downset{\tau}} \circ \zeta^{G\sigma}_{\sigma} \circ \pi^{G\sigma}_{G\sigma} \circ \zeta^{\sigma}_{G\sigma}(q_{\sigma}) & \text{choosing $\sigma$ inside $f^{-1}(G\sigma)$}\\
&= \pi^{\sigma}_{\downset{\tau}} \circ \zeta^{G\sigma}_{\sigma} \circ \zeta^{\sigma}_{G\sigma}(q_{\sigma}) & \\
&=\pi^{\sigma}_{\downset{\tau}}(q_{\sigma}) & \\
&=q_{\downset{\tau}}.
\end{align*}

Analogously, given $Q \in \Gamma(Y(P/G))$ one has that, for every $\downset{T} \in X(P/G)$,

\begin{align*}
\varphi(\psi(Q))_{\downset{T}} = \pi^{\Sigma}_{\downset{T}} \circ \zeta^{\sigma}_{\Sigma}(\psi(Q)_{\sigma}) &= \pi^{\Sigma}_{\downset{T}} \circ \zeta^{\sigma}_{\Sigma} \circ \pi^{\sigma}_{\sigma} \circ \zeta^{\Sigma}_{\sigma}(Q_{\Sigma})\\
&= \pi^{\Sigma}_{\downset{T}} \circ \zeta^{\sigma}_{\Sigma} \circ \zeta^{\Sigma}_{\sigma}(Q_{\Sigma}) = \pi^{\Sigma}_{\downset{T}}(Q_{\Sigma}) = Q_{\downset{T}}.
\end{align*}

\end{proof}

\section{Group actions and the Cohen-Macaulay property}
\label{sec:RACM}
\newcommand{\orb}[1]{\Sigma_{\vert #1}}

\begin{defi}\label{df:decoupled}
We call an action $H\circlearrowright \Sigma$ on a pure $d$-dimensional simplicial complex {\em decoupled} if it is translative and there is a decomposition 
$$
H=\bigoplus_{i=0}^d H_i
$$
with $H_i\neq\{0\}$ for all $i$ and such that every maximal simplex $\sigma\in \max \Sigma$ can be written as $\sigma=\{x_0,\ldots,x_d\}$ with $H_i=\stab_H(\sigma\setminus x_i)$ for all $i=0,\ldots,d$. If $\dim(\Sigma)=0$ we further require that $\stab_H(\sigma)=\{0\}$ for all $\sigma\in \Sigma\setminus \{\emptyset\}$.
\end{defi}

\begin{rem}\label{rem:fof}$\,$
\begin{itemize}
\item[(a)]
If a group action $H\circlearrowright \Sigma$ is  decoupled, then $H$ acts freely on the set of facets of $\Sigma$, i.e., $\stab_H(\beta)=\{0\}$ for every maximal face $\beta$ of $\Sigma$. This holds by definition if $\dim(\Sigma)=0$. If $\dim(\Sigma)>0$ and $h\beta =\beta$ for some $h\in H$,  then (by translativity) $h(\sigma\setminus\{x\})=(\sigma\setminus\{x\})$ for all $x\in \sigma$ and thus $h$ is in the intersection of (at least two, since $\dim(\Sigma)>0$) of the $H_i$ - which is the trivial group because the sum of the $H_i$ is direct. Thus, $h$ is the identity element.
\end{itemize}
\end{rem}

\begin{rem}\label{df:orb}
Here and in the following, given any group action $G\circlearrowright \Sigma$ on a simplicial complex and any $\sigma\in \Sigma$ we will write $$\orb{G\sigma}:=\{\tau\in \Sigma \mid \tau \subseteq g\sigma \textrm{ for some }g\in G\}$$
for the set of faces in the orbit of $\sigma$. This is a simplicial complex.
\end{rem}

\begin{lem}\label{lem_dec} Let $H\circlearrowright \Sigma$ be a decoupled action. Then, for all $\sigma\in \Sigma$ and all facets $\beta\in \max \Sigma$ with $\sigma\subseteq\beta$, the following hold.
\begin{itemize}
\item[(a)] $\stab_H(\sigma) = \bigoplus_{x\in \beta\setminus \sigma} \stab_H(\beta\setminus \{x\})$. 
\item[(b)] Let $K:=\stab_H(\sigma)$. Then $\stab_K(\beta\setminus \{x\})=\stab_H(\beta\setminus \{x\})$ for all $x\in \beta\setminus \sigma$. \\
In particular, a decoupled action on $\Sigma$ induces a decoupled action of $\stab_H(\sigma)$ on the link of $\sigma$ in $\Sigma$.
\item[(c)] Suppose that $H$ is abelian and let $L:=\bigoplus_{x\in \sigma}\stab_H(\beta\setminus \{x\})$. Then, the action of $L$ on $\orb{H\sigma}$ is isomorphic to that of the quotient $H/K$. Moreover, this action is decoupled with associated decomposition
\begin{equation}\label{dec}
L=\bigoplus_{x\in \sigma} \stab_L(\sigma\setminus \{x\}). 
\end{equation}
\end{itemize}
\end{lem}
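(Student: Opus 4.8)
The strategy is to handle the three items in order, each time tracing through the defining decomposition of a decoupled action. For part (a), I would start from the general fact (Remark \ref{rem_traeq}) that for a translative action on a simplicial complex $\stab_H(\sigma)=\bigcap_{v\in\sigma}\stab_H(v)$, but here it is more convenient to argue the other way: pick the facet $\beta\supseteq\sigma$ and write $\beta=\{x_0,\dots,x_d\}$ with $H_i=\stab_H(\beta\setminus\{x_i\})$, so that $H=\bigoplus_{i=0}^d H_i$. An element $h\in H$ fixes $\sigma$ iff it fixes every vertex of $\sigma$; since $h=\sum h_i$ with $h_i\in H_i$, and each $H_i=\stab_H(\beta\setminus\{x_i\})$ fixes all vertices of $\beta$ except possibly $x_i$, one checks that $h$ fixes a vertex $x_j\in\sigma$ iff the component $h_j$ is trivial (here one uses that the only element of $H_j$ fixing $x_j$ is $0$, which follows from Remark \ref{rem:fof}(a) applied to the facet $\beta$: $\stab_H(\beta)=\bigcap_i H_i=\{0\}$, and more precisely $H_j\cap\stab_H(x_j)=H_j\cap\bigcap_{i\ne j}H_i^{\,c}\cdots$ — cleanest is: $h_j$ fixes $\beta\setminus\{x_j\}$ by membership in $H_j$, so if it also fixes $x_j$ it fixes $\beta$, hence is $0$). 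Therefore $\stab_H(\sigma)=\bigoplus_{x_j\notin\sigma}H_j=\bigoplus_{x\in\beta\setminus\sigma}\stab_H(\beta\setminus\{x\})$, which is (a).

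For part (b), set $K:=\stab_H(\sigma)$. One inclusion $\stab_K(\beta\setminus\{x\})\subseteq\stab_H(\beta\setminus\{x\})$ is trivial. Conversely, if $x\in\beta\setminus\sigma$ then by (a) $\stab_H(\beta\setminus\{x\})$ is one of the summands appearing in the direct sum decomposition of $K$, hence is contained in $K$, and therefore $\stab_H(\beta\setminus\{x\})=\stab_K(\beta\setminus\{x\})$. For the "in particular" clause I would invoke Remark \ref{rem:oiss}: the link $\operatorname{lk}_\Sigma(\sigma)$ is a pure $(d-|\sigma|)$-dimensional simplicial complex whose facets are exactly the $\beta\setminus\sigma$ for $\beta\in\max\Sigma$ with $\sigma\subseteq\beta$, and $K$ acts on it (translatively, since restrictions of translative actions are translative). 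Writing such a facet $\beta\setminus\sigma=\{x:\ x\in\beta\setminus\sigma\}$ and using (a)+(b), we get $K=\bigoplus_{x\in\beta\setminus\sigma}\stab_K(\beta\setminus\{x\})=\bigoplus_{x\in\beta\setminus\sigma}\stab_K((\beta\setminus\sigma)\setminus\{x\}\text{ in the link})$, with each summand nontrivial, which is exactly the decoupling condition for $K\circlearrowright\operatorname{lk}_\Sigma(\sigma)$ (the $0$-dimensional edge case being inherited from that of $\Sigma$).

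For part (c), assume $H$ abelian and put $L:=\bigoplus_{x\in\sigma}\stab_H(\beta\setminus\{x\})$; by (a), $H=K\oplus L$ as an internal direct sum, so the quotient map restricts to an isomorphism $L\xrightarrow{\ \cong\ }H/K$. To see that the $L$-action on $\orb{H\sigma}$ agrees with the $H/K$-action, note $K=\stab_H(\sigma)$ acts trivially on $\orb{H\sigma}$? — this is \emph{not} true in general, so the correct statement to prove is that the $L$-action on $\orb{H\sigma}$ is \emph{equivariantly isomorphic}, via the quotient $H\to H/K$, to the $H/K$-action on the same complex; concretely, every $H$-orbit inside $\orb{H\sigma}$ meets it, and the map $L\to H/K$ being an isomorphism means $L$ and $H/K$ act "the same way", which I would verify by checking that for $\tau\subseteq g\sigma$ the elements of $H$ carrying $\tau$ to a given translate differ by $K$, hence are determined by their $L$-component. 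Finally, for the decoupling of $L\circlearrowright\orb{H\sigma}$: the facets of $\orb{H\sigma}$ are the $H$-translates $g\sigma$, which (by translativity and since $L\cong H/G$-orbit-transitively... ) can all be taken to be $\sigma$ itself up to the $L$-action; writing $\sigma=\{x:x\in\sigma\}$, for each $x\in\sigma$ we have $\stab_L(\sigma\setminus\{x\})$, and I claim $\stab_L(\sigma\setminus\{x\})=\stab_H(\beta\setminus\{x\})$ — one inclusion because $\stab_H(\beta\setminus\{x\})\subseteq\stab_H(\sigma\setminus\{x\})\cap L$, the other because any $\ell\in L$ fixing $\sigma\setminus\{x\}$ must, writing $\ell=\sum_{y\in\sigma}\ell_y$ with $\ell_y\in\stab_H(\beta\setminus\{y\})$, have all components $\ell_y=0$ for $y\ne x$ (same vertex-fixing argument as in (a)). This yields $L=\bigoplus_{x\in\sigma}\stab_L(\sigma\setminus\{x\})$ with nontrivial summands, establishing \eqref{dec} and the decoupling (again the $0$-dimensional case is trivial since then $\sigma$ is a single vertex and $L=\stab_H(\beta\setminus\{x\})=\stab_H(\beta)$-complement$=H$, while $K=\{0\}$).

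\textbf{Main obstacle.} The bookkeeping in part (c) is where I expect the real work to lie: one must carefully distinguish the (faithful) $L$-action from the possibly-unfaithful $K$-action, pin down precisely what "$\orb{H\sigma}$" looks like as a simplicial complex (its faces, its facets, and how $L$ permutes them), and check that the decomposition $L=\bigoplus_{x\in\sigma}\stab_L(\sigma\setminus\{x\})$ is both direct and has the facet-stabilizer property demanded by Definition \ref{df:decoupled} — including matching the indexing of summands to vertices of each facet in a consistent way. Abelianness is used crucially to identify $L$ with $H/K$ and to split off $K$; parts (a) and (b) are comparatively routine direct-sum manipulations once the vertex-fixing lemma ($h_j\in H_j$ fixes $x_j$ $\iff$ $h_j=0$) is isolated and proved first.
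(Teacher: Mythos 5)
Your part (a), the first claim of (b), and the identification $\stab_L(\sigma\setminus\{x\})=\stab_H(\beta\setminus\{x\})$ in (c) run along essentially the same lines as the paper. The serious problem is the first half of (c). You dismiss as ``not true in general'' the statement that $K=\stab_H(\sigma)$ acts trivially on $\Sigma_{|H\sigma}$; under the hypotheses this is true, and it is precisely the key step: every face of $\Sigma_{|H\sigma}$ is some $\tau\subseteq g\sigma$, abelianness gives $\stab_H(g\sigma)=\stab_H(\sigma)=K$, and translativity (Remark \ref{rem_ta}(ii)) gives $\stab_H(g\sigma)\subseteq\stab_H(\tau)$, so $K$ fixes every face. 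This is what makes the $H$-action on $\Sigma_{|H\sigma}$ factor through $H/K$ --- without it, the ``$H/K$-action on $\Sigma_{|H\sigma}$'' you want to compare $L$ with is not even defined --- and, combined with $H=K\oplus L$ from (a), it immediately identifies the $L$-action with the $H/K$-action. Your proposed substitute, that two elements of $H$ carrying a face $\tau$ to the same translate differ by an element of $K$, is false: they differ by $\stab_H(\tau)$, which for $\tau\subseteq\sigma$ equals $K\oplus\bigoplus_{x\in\sigma\setminus\tau}\stab_H(\beta\setminus\{x\})$ by (a), hence strictly contains $K$ for every proper face $\tau$ (the extra summands are nontrivial by the definition of a decoupled action). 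So, as written, the isomorphism-of-actions claim in (c) is not proved.

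There is also a gap in the ``in particular'' of (b) (and, less severely, in the decoupling part of (c)). Showing that for each facet $\beta\setminus\sigma$ of the link the stabilizers of its codimension-one subfaces sum directly to $K$ is \emph{not} ``exactly the decoupling condition'': Definition \ref{df:decoupled} demands one fixed decomposition $K=\bigoplus_i K_i$ realized simultaneously by \emph{every} facet of the link. This uniformity is where the paper's proof of (b) does its work: it fixes the global decomposition $H=\bigoplus_i H_i$, notes that each $H_i$ is either contained in $K$ or meets it trivially (so the set $\{i\mid H_i\subseteq K\}$ does not depend on a facet), and shows for an arbitrary facet $\beta'=\tau\uplus\sigma$ that $\stab_H(\beta'\setminus\{x\})\subseteq K$ if and only if $x\notin\sigma$, so the summands occurring for $\tau$ are exactly the fixed $H_i$ contained in $K$. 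Your argument has the ingredients (apply (a) to each facet $\beta'\supseteq\sigma$ and use decoupledness of $H\circlearrowright\Sigma$ to recognize the summands as $H_i$'s) but never draws this facet-independent conclusion; in (c) the corresponding point is your ``up to the $L$-action'' remark, which needs to be expanded into $\stab_L(h\sigma\setminus\{hx\})=\stab_L(\sigma\setminus\{x\})$ by abelianness (the paper does this via vertex stabilizers and translativity). Finally, the parenthetical $0$-dimensional remarks are off: if $\sigma=\{x\}$ is a vertex of a higher-dimensional $\Sigma$, then $K=\stab_H(\{x\})$ is generally nontrivial and $L=\stab_H(\beta\setminus\{x\})\neq H$; the condition actually to be checked is $\stab_L(v)=\{0\}$ for every vertex $v$ of $\Sigma_{|H\sigma}$ (resp.\ $\stab_K(v)=\{0\}$ for vertices of the link), which follows from freeness on facets (Remark \ref{rem:fof}), not from the $0$-dimensional clause for $\Sigma$.
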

\begin{proof}$\,$
\begin{itemize}
\item[(a)] Write $\beta=\{x_0,\ldots ,x_d\}$ and $\sigma=\{x_0,\ldots,x_d\}$, in accordance with the decomposition $H=\oplus_i H_i$. Given $h\in H$ consider its (unique) expansion $h=h_0\oplus\ldots\oplus h_d$, $h_i\in H_i$. Now, by translativity, $h\in \stab_H(\sigma)$ means $h_ix_i=x_i$ for all $i\leq k$, and this implies that $h_i$ is the identity element in $H_i$ (otherwise $h_i$ would be a nontrivial element in $H_i\cap \stab_H(\{x_i\})=\stab_H(\beta)$, contradicting Remark \ref{rem:fof}). Therefore $h\in \bigoplus_{i>k}H_i$. For the right-to-left inclusion suppose $h\in \bigoplus_{i\leq k}H_i$, i.e., the element $h_i$ is the identity in $H_i$ for all $i\leq k$.  Then, $hx_i = (h_1\ldots h_d)x_i = x_i$ for all $i\leq k$ (where we use that $h_i\in \stab_H(\{x_j\})$ for all $i\neq j$) and therefore $h\in \stab_H(\sigma)$.
\item[(b)] The first claim is immediate since
$$
\stab_K(\beta\setminus \{x\})=
\stab_H(\beta\setminus \{x\})\cap K =
\stab_H(\beta\setminus \{x\}),
$$
the first equality by definition, the second since $\stab_H(\beta\setminus \{x\})\subseteq K$ whenever $x\not\in \sigma$. On the other hand, for $x\in \sigma$ we have $K\cap \stab_H(\beta\setminus \{x\})=\{0\}$. Since $H_i\neq\{0\}$ for all $i$, we deduce that 
\begin{equation}\tag{$\dagger$}\label{rtw}
\stab_H(\beta\setminus \{x\})\subseteq K \textrm{ if and only if } x\not\in \sigma.
\end{equation}

In order to prove that the action of $K$ on the link of $\sigma$ is decoupled, we first provide a decomposition of $K$. Recall the fixed decomposition $H=\bigoplus_iH_i$. By the previous discussion we can relabel so that $H_i\cap K = H_i$ for $i\leq t$ and $H_i\cap K =\{0\}$ otherwise, and we write
$$
K=\bigoplus_{i\leq t} H_i.
$$ 
Now consider any maximal simplex $\tau$ in the link of $\sigma$ in $\Sigma$. Then, $\beta':=\tau\uplus \sigma$ is a maximal simplex of $\Sigma$. Since the action of $H$ on $\Sigma$ is decoupled, we can list $\beta'=\{x_0',\ldots,x_d'\}$ so that $H_i=\stab_H(\beta'\setminus \{x_i'\})$. Our relabeling ensures that $H_i\subseteq K$ iff $i\leq t$, and by \eqref{rtw} we also know  that $\stab_H(\beta'\setminus \{x\})\subseteq K$ iff $x\not\in \sigma$. Therefore $\tau=\{x_0',\ldots,x_t'\}$, and $H_i=\stab_H(\beta'\setminus \{x_i'\})=\stab_K(\tau\setminus \{x_i'\})$ for $i\leq t$, as required.

\item[(c)] By definition, $H=L\oplus K$. Thus $L$ is a valid choice as a set of representatives for the classes of the quotient $H/K$, with $(g\oplus K)(h\oplus K)=(gh\oplus K)$ in $H/K$. Moreover, for each $\tau\in \orb{H\sigma}$ we have $K=\stab_H(\sigma)\subseteq \stab_H(\tau)$ by translativity and because $H$ is abelian. Thus, for $g\in L$ and every $\tau\in \orb{H\sigma}$ we have that $g\tau = (g\oplus K)\tau$ and so the actions of $H/K$ and $L$ on $\orb{H\sigma}$ are equivalent under the natural isomorphism $L\to H/K$, $g\mapsto g\oplus K$. 

In order to prove that the action of $L$ is decoupled notice first that
$$
\stab_L(\sigma\setminus\{x\})=\stab_H(\sigma\setminus\{x\})\cap L = \stab_H(\beta\setminus \{x\})
$$
for all $x\in \sigma$, where the first equality is by definition and the second equality follows from part (a).
Now write $\sigma=\{x_0,\ldots,x_k\}$ and let $L_i:=\stab_L(\sigma\setminus \{x_i\})$, so that $L=\bigoplus_i L_i$ is the decomposition stated in \eqref{dec}. Every maximal simplex of the orbit of $\sigma$ is of the form $\sigma':=\{x_0',\ldots,x_k'\}$ where $x_i'=hx_i$ for all $i$ and some $h$, and all $x_i'$ are pairwise distinct. 
Now, for every $i=0,\ldots,k$, we have $\stab_L(\sigma'\setminus \{x_i'\})=\bigcap_{j\neq i}\stab_L(\{x_j'\}) = \bigcap_{j\neq i}\stab_L(\{x_j\}) = \stab_L(\sigma\setminus \{x_i\})=L_i$  as required (the second equality holds because $H$ is abelian, the first and third since the action is translative). 
\end{itemize}

\end{proof}

\begin{prop}\label{psh}
Let $H\circlearrowright \Sigma$ be a decoupled action of an abelian group $H$ and fix $\sigma\in \Sigma$. Then,  $\orb{H\sigma}$ is a shellable simplicial complex.
\end{prop}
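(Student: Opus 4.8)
The plan is to describe $\orb{H\sigma}$ explicitly and recognize it as a join of $0$-dimensional complexes, which is always shellable. Let $\beta \in \max\Sigma$ be a facet containing $\sigma$, write $\sigma = \{x_0, \ldots, x_k\}$ according to the decomposition, and set $L_i := \stab_L(\sigma\setminus\{x_i\}) = \stab_H(\beta\setminus\{x_i\})$ as in Lemma \ref{lem_dec}(c), so that $L = \bigoplus_{i=0}^k L_i$ acts on $\orb{H\sigma}$ with this decomposition witnessing decoupling. The first step is to check that the vertex set of $\orb{H\sigma}$ is the disjoint union $\coprod_{i=0}^k L x_i = \coprod_{i=0}^k L_i x_i$ (the $L_j$ with $j\neq i$ fixing $x_i$ by translativity), and that a subset $\{y_0,\ldots,y_k\}$ with $y_i \in L_i x_i$ is always a face — indeed if $y_i = h_i x_i$ then $h := h_0 \oplus \cdots \oplus h_k \in L$ satisfies $h x_i = y_i$ for all $i$ (using $h_j \in \stab_H(\{x_i\})$ for $j \neq i$), so $\{y_0,\ldots,y_k\} = h\sigma$ is a face. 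Conversely every face of $\orb{H\sigma}$ is contained in some $h\sigma$, hence has at most one vertex in each $L_i x_i$.

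This identifies $\orb{H\sigma}$ with the simplicial join $\bigstar_{i=0}^k \Delta_i$, where $\Delta_i$ is the $0$-dimensional complex on the vertex set $L_i x_i$ (a discrete set of $|L_i \cdot x_i| = [L_i : \stab_{L_i}(x_i)]$ points; note $\stab_{L_i}(x_i) = \{0\}$ since the action of $H$ is decoupled and hence free on facets, so in the $0$-dimensional case $k=0$ this is the free orbit $L\sigma$, and for $k>0$ each $\Delta_i$ is just a nonempty discrete set). Each $\Delta_i$ is trivially shellable (any ordering of its vertices is a shelling of a $0$-complex), and a join of shellable complexes is shellable — this is a classical fact (see e.g. \cite[\S 2.9]{BjWa1} or the join lemma for shellability). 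Applying it to the finite join $\bigstar_i \Delta_i$ gives that $\orb{H\sigma}$ is shellable.

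The one point requiring a little care is that $\orb{H\sigma}$ may well be an infinite complex (the groups $L_i$ can be infinite), so I should either invoke a version of the shellability/join statement valid for non-pure or infinite complexes in the sense of Remark \ref{remshell}, or note that for the purposes of the conclusions we need (well-connectedness, Lemma \ref{LBWW}) it suffices to observe that $\orb{H\sigma}$ is a join of discrete sets and hence, being pure of dimension $k$, is shellable with the shelling order obtained lexicographically from fixed well-orderings of the vertex sets of the $\Delta_i$ — the alternative characterization of shellability in Remark \ref{remshell} is straightforward to verify directly for such a lexicographic order on the facets $\prod_i L_i x_i$. I expect this bookkeeping around infiniteness (making sure "shellable" is used in a sense consistent with Remark \ref{remshell} and that the join argument does not secretly assume finitely many facets) to be the main, though minor, obstacle; the structural identification of $\orb{H\sigma}$ as a join is the heart of the argument and is essentially forced by Lemma \ref{lem_dec}.
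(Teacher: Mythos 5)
Your proposal is correct, and its combinatorial core coincides with the paper's: both rest on Lemma \ref{lem_dec}(c), which makes the facets of $\Sigma_{\vert H\sigma}$ correspond bijectively to elements of $L=\bigoplus_i L_i$, one coordinate per vertex orbit. What you add is the explicit identification of $\Sigma_{\vert H\sigma}$ with the join of the discrete complexes on the vertex sets $L_ix_i$ --- a cleaner structural packaging which, at least for finite complexes, would let you simply quote the join lemma for shellability instead of checking a shelling order by hand. Two caveats, both of which you essentially anticipate: first, the join description needs the (easy, but worth one line) facts that the orbits $Lx_0,\dots,Lx_k$ are pairwise disjoint (if $gx_i=x_j$ with $i\neq j$, then $x_i$ and $gx_i$ have the common upper bound $\sigma$, so translativity forces $x_i=x_j$) and that each facet meets $L_ix_i$ in exactly one vertex (freeness on facets, Remark \ref{rem:fof}, applied to the decoupled $L$-action); second, since the $L_i$ may be infinite, ``shellable'' must be taken in the well-ordered sense of Remark \ref{remshell}, so citing a join lemma off the shelf is delicate --- but your fallback is sound and is in fact the paper's own argument: the lexicographic order induced by well-orders of the finitely many factors $L_i$ is a well-order, and the pairwise criterion of \cite[Lemma 2.3]{BjWa1} is verified by replacing the first differing coordinate of the later facet, exactly as the paper does (there, each $L_i$ is well-ordered starting from the identity, a well-ordered linear extension of the product order is taken, and the criterion is checked by setting one differing coordinate to the identity). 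So the two proofs differ only in presentation; yours makes the join structure explicit, while the paper goes directly to the shelling order, and both ultimately perform the same one-coordinate-swap verification.
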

\begin{proof}
Notice that the set $M$  of facets of $\orb{H\sigma}$ is in bijection with $L:=H/\stab_H(\sigma)$, a group whose action on $\orb{H\sigma}$ is decoupled by Lemma \ref{lem_dec}.(c). Let us consider this bijection, 
$$
\lambda: M\to L=\bigoplus_{i=1}^k L_i
$$
with $L_i=\stab_L(\sigma\setminus \{x_i\})$, where we write $\sigma=\{x_1,\ldots,x_k\}$. Notice that $L_j\subseteq \stab_L(x_i)$ for all $i\neq j$. In particular, for every $m\in M$ and all $i=1,\ldots k$ we have
\begin{equation}\tag{$\ddagger$}\label{eq:gmu}
m\cap Lx_i = \lambda(m)_i x_i.
\end{equation}

The idea now is to use $\lambda$ as a labeling of the elements of $M$. Choose a well-order $\prec_i$ of each $L_i$ that begins with the identity element $0_i$ and choose any linear extension of the cartesian product of the $\prec_i$s. Via $\lambda$ this induces a total (well-) order $\prec$ on $M$.
We will use the following property of $\prec$: if $\lambda(m)_{i} = 0_i \neq \lambda(m')_{i}$ and $\lambda(m)_j = \lambda(m')_j$ for all $j\neq i$, then $m\prec m'$. 

We want to show that $\prec$ is a shelling order for $\orb{H\sigma}$. By Remark \ref{remshell} it is enough to prove the following.

\begin{itemize}
\item[{\em Claim}] Let $m_1,m_2\in M$ with $m_1\prec m_2$ and consider $\tau\in \orb{H\sigma}$ with $\tau \subseteq m_1, \tau \subseteq m_2$. Then, there is $m_3\in M$ with $m_3\prec m_2$ and $m_1\cap m_2 \subseteq m_3\cap m_2 = m_2\setminus \{x'\}$ for some $x'\in m_2$.
\item[{\em Proof}] First notice that, since $\lambda$ is injective,  $\lambda(m_1)$ and $\lambda(m_2)$ must differ in at least one component, say the $1$st. In particular, $\lambda(m_2)_{1}\neq 0_{1}$.  

Consider $l:=(\lambda(m_2)_{1}\oplus 0_2 \oplus\cdots \oplus 0_k)\in L$ and let $m_3:=(-l) m_2$. A direct computation shows that
$$
\lambda(m_3)_{j} = 
\left\{\begin{array}{ll}
\lambda (m_2)_{j} & j\neq 1 \\
0_{1} & j=1
\end{array}\right.
$$
 We conclude:
\begin{itemize}
\item $m_3\prec m_2$ because $\lambda(m_3)_1=0_1\neq \lambda(m_2)_1$;
\item since by assumption $\lambda(m_1)_1\neq \lambda(m_2)_1$,  using Equation \eqref{eq:gmu} we obtain $m_1\cap m_2\subseteq m_3\cap m_2$;  \item choosing $x':=\lambda(m_2)_1x_1$ (i.e., the unique element of $m_2\cap Lx_1$), we have  $m_3\cap m_2= m_2\setminus \{x'\}$.
\end{itemize}
\end{itemize}

\end{proof}

\begin{thm}\label{thm:hcm}
Let $H\circlearrowright \Sigma$ be a 
decoupled action of an abelian group. If $\Sigma$ is homotopy Cohen-Macaulay, then so is $\overline{P_{\Sigma}/H}$.
\end{thm}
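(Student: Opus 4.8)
The plan is to induct on $d:=\dim\Sigma$; the case $d=0$ is trivial, since then $\overline{P_{\Sigma}/H}$ is a nonempty set of points. For the inductive step, recall that homotopy Cohen--Macaulay complexes are pure, so $\Sigma$ is pure of dimension $d$ and $P_{\Sigma}/H$ is a simplicial poset all of whose facets have rank $d+1$. By Definition \ref{def:CM} and Remark \ref{PCM_crit} — using that lower intervals of a simplicial poset are Boolean algebras, hence Cohen--Macaulay — the poset $\overline{P_{\Sigma}/H}$ is homotopy Cohen--Macaulay as soon as the open upper interval $(P_{\Sigma}/H)_{>x}$ is well-connected for every $x\in P_{\Sigma}/H$. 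For $x=\hat 0$ (where $(P_{\Sigma}/H)_{>\hat0}=\overline{P_{\Sigma}/H}$) this is the assertion \emph{(I)}: $\overline{P_{\Sigma}/H}$ is well-connected; for $x=H\tau$ with $\tau\neq\emptyset$ it is \emph{(II)}: $(P_{\Sigma}/H)_{>H\tau}$ is well-connected.

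I would obtain \emph{(II)} from the inductive hypothesis. By Lemma \ref{lem:up}(ii) there is an isomorphism $(P_{\Sigma}/H)_{\geq H\tau}\cong P_{\geq\tau}/\stab_H(\tau)$; by Remark \ref{rem:oiss} the poset $P_{\geq\tau}$ is the face poset of the link $\operatorname{lk}_{\Sigma}(\tau)$, a homotopy Cohen--Macaulay complex (Definition \ref{def:CM}) of dimension $d-|\tau|<d$; and by Lemma \ref{lem_dec}(b) the abelian group $\stab_H(\tau)$ acts decoupledly on $\operatorname{lk}_{\Sigma}(\tau)$. Applying the theorem inductively to this action gives that $(P_{\Sigma}/H)_{>H\tau}\cong\overline{P_{\geq\tau}/\stab_H(\tau)}\cong\overline{P_{\operatorname{lk}_{\Sigma}(\tau)}/\stab_H(\tau)}$ is homotopy Cohen--Macaulay, hence well-connected (when $\tau$ is a facet the interval is empty, and there is nothing to check).

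The heart of the proof is \emph{(I)}. Here I would consider the rank-preserving quotient surjection $f\colon\overline{P_{\Sigma}}\to\overline{P_{\Sigma}/H}$ between the posets of nonempty faces, which are ranked of the same length $d$; its source has a well-connected order complex, since $\Delta(\overline{P_{\Sigma}})$ is the barycentric subdivision of $\Sigma$ and $\Sigma$ is homotopy Cohen--Macaulay. By Remark \ref{wellconn} (Lemma \ref{LBWW} with $t=d-1$) it then suffices to check, for every $q=H\tau\in\overline{P_{\Sigma}/H}$, that both $(\overline{P_{\Sigma}/H})_{>q}$ and the fiber $f^{-1}\bigl((\overline{P_{\Sigma}/H})_{\leq q}\bigr)$ are well-connected. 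The first is exactly \emph{(II)}, as $\tau\neq\emptyset$. For the second, the key observation is that $f^{-1}\bigl((\overline{P_{\Sigma}/H})_{\leq H\tau}\bigr)$ is the poset of nonempty faces of the orbit subcomplex $\orb{H\tau}$ of Remark \ref{df:orb}: a nonempty $\sigma'\in P_{\Sigma}$ maps below $H\tau$ exactly when it lies in the $H$-orbit of a subface of $\tau$. By Proposition \ref{psh} the complex $\orb{H\tau}$ is shellable; being pure of dimension $|\tau|-1$, it is well-connected by Remark \ref{remshell}, and hence so is its barycentric subdivision. This verifies the hypotheses of Remark \ref{wellconn} and completes the induction.

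I expect \emph{(I)} to be the main obstacle. It reduces, via Lemma \ref{LBWW}, to producing well-connected fibers for the quotient map, and this is possible only because of Proposition \ref{psh} — it is precisely there that abelianity of $H$ and decoupledness of the action enter in an essential way, through the explicit shelling order built from the direct-sum decomposition of $H$. A more routine difficulty is the index bookkeeping needed for Remark \ref{wellconn} to apply (matching connectivity degrees with poset lengths) and the identification of $f^{-1}\bigl((\overline{P_{\Sigma}/H})_{\leq H\tau}\bigr)$ with the orbit subcomplex $\orb{H\tau}$.
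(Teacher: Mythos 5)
Your proof is correct and follows essentially the same route as the paper's: induction on dimension, reduction via Remark \ref{PCM_crit} to well-connectivity of upper intervals (handled by Lemma \ref{lem:up}(ii), Lemma \ref{lem_dec}(b) and the inductive hypothesis applied to links) and of the full quotient poset (handled by Lemma \ref{LBWW}/Remark \ref{wellconn} applied to the quotient map, with fibers identified as the shellable orbit subcomplexes of Proposition \ref{psh}). Your explicit identification of the fibers $f^{-1}\bigl((\overline{P_{\Sigma}/H})_{\leq H\tau}\bigr)$ with the orbit subcomplexes $\orb{H\tau}$ is a detail the paper leaves implicit, but the argument is the same.
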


\begin{proof}
We proceed by induction on the dimension $d$ of $\Sigma$, the claim being trivial for $d=0$. Let then $d>0$ and suppose that the claim holds for all simplicial complexes of dimension strictly smaller than $d$. In order to prove the Cohen-Macauay property for $\overline{P_{\Sigma}/H}$ we will use the criterion given in Remark \ref{PCM_crit}, requiring us to show that open intervals in $\widehat{P_{\Sigma}/H}$ are well-connected.
Translativity of the action implies that
\begin{itemize}
\item[(i)]  $P_{\Sigma}/H$ is a simplicial poset. Thus every open interval $(x,y)\subseteq P_{\Sigma}/H$ is homotopy equivalent to a sphere of dimension $\ell(x,y)$, and is therefore well-connected.
\item[(ii)] For $H\sigma\in P_{\Sigma}/H$, there is an isomorphism ${(P_{\Sigma}/H)_{>H\sigma}}\simeq{((P_\Sigma)_{>\sigma})/\stab_H(\sigma)}$.
Now $(P_{\Sigma})_{\geq\sigma}$ is the poset of cells of the link of $\sigma$ in $\Sigma$, which is Cohen-Macaulay because $\Sigma$ is. Moreover, by Lemma \ref{lem_dec}.(b) the action of $\stab_H(\sigma)$ on the link of $\sigma$ is decoupled. If $H\sigma$ is not the minimal element of $P_{\Sigma}/H$, then the dimension of the link of $\sigma$ is less than $d$, so we can apply the induction hypothesis and conclude that $(P_{\Sigma}/H)_{> H\sigma}$ is Cohen-Macaulay (and, in particular, well-connected).
\end{itemize}
 We are now left with proving that $\overline{P_{\Sigma}/H}$ is well-connected. 
  For this, we want to apply Lemma \ref{LBWW}, in the reformulation of Remark \ref{wellconn}, to the quotient map
$
\overline{P_{\Sigma}} \mapsto\overline{P_{\Sigma}/H}
$.  
Item (ii) above verifies immediately condition (1) of Lemma \ref{LBWW}, while condition (2) holds by Proposition \ref{psh}.\end{proof}

\def\Kd{\mathscr K (\delta)}
\newcommand{\base}[1]{\underline{#1}}

\section{Semimatroids and geometric semilattices} \label{sec:matroids}

In our context, a natural analogue to matroids in classical Stanley-Reisner theory are (group actions on) semimatroids and geometric semilattices.

\subsection{Semimatroids and geometric semilattices}  

Semimatroids are abstract structures, introduced independently by Ardila and Kawahara \cite{Ard,Kawa}, that are intuitively best described as axiomatizations of the intersection pattern of a locally finite set $\mathscr{A}$ of affine hyperplanes (although the abstract theory is much more general \cite[\S 4]{DR}). Given such a set, one can single out the family $\mathcal K$ of all subsets with nonempty intersection. 
The local finiteness assumption implies that $\mathcal K$ is an abstract simplicial complex on the vertex set $\mathscr A$. Moreover, every nonempty intersection of hyperplanes is an affine subspace with a well-defined codimension: this allows us to define a function $\rho:\mathcal K \to \mathbb N$ that associates to every element of $\mathcal K$ the codimension of the corresponding intersection.   The triple $(\mathscr A, \mathcal K, \rho)$ is an example of a semimatroid.

Formally, a semimatroid is any triple $\SSS:=(S,\mathcal K, \rho)$ consisting of a set $S$, a finite-dimensional simplicial complex $\mathcal K$ on the vertex set $S$ and a function $\rho:\mathcal K \to \mathbb N$ satisfying a list of axioms that we will not need to specify (see \cite{Ard,Kawa} for the original definition and \cite{DR} for the infinite case). The {\em rank} of the semimatroid $\SSS$ is the maximum value of $\rho$, which we denote by $\rho(\SSS)$. The axioms imply that this is a finite number.

Associated to every semimatroid $\SSS$ are two posets.
\begin{itemize}
    \item The poset of {\em independent sets} is the set
    $$\mathcal I (\SSS):=\{I\in \mathcal K \mid \rho(I)=\vert I\vert\}$$
    partially ordered by inclusion. This is the poset of faces of an abstract simplicial complex on the vertex set $S$.
    \item The poset of {\em closed sets} (or {\em flats}) is the set
    $$
    \mathcal L (\SSS) :=\{ F\in \mathcal K \mid \rho(F')>\rho(F)\textrm{ for all } F'\supsetneq F\}
    $$
    partially ordered by inclusion.
\end{itemize}

Both posets are {\em geometric semilattices} in the sense of Wachs and Walker \cite{WW}. Moreover, every geometric semilattice is the poset of flats of a (possibly infinite) semimatroid \cite[Theorem E]{DR}.

We now review a notion of group actions on semimatroids. The guiding intuition here is that, in the context of our motivating example, we would like to model periodic affine hyperplane arrangements -- i.e., arrangements on which a discrete group of translations acts. In fact, when the semimatroid is associated to an affine arrangement of hyperplanes, the poset of flats is isomorphic to the poset of all intersections of subspaces in the arrangement ordered by reverse inclusion (see, e.g., Section \ref{sec:AA}).

\begin{defi}[Compare {\cite[\S 3]{DR}}]\label{def_gaos}
Let $G$ be a group. A {\em $G$-semimatroid} $\mathfrak S : G\circlearrowright \SSS$ is an action of $G$ on a semimatroid $\SSS=(S,\mathcal K,\rho)$, i.e., an action of $G$ by permutations of $S$ that preserves $\mathcal K$ and $\rho$.  
Furthermore, we require that there is a finite number of orbits of elements of $\mathcal K$. The rank of $\mathfrak S$ is $\rho(\mathfrak S):=\rho(\SSS)$, the maximum of $\rho$ over $\mathcal K$.

Such a $G$-semimatroid is called {\em translative} if, for every $s\in S$, $\{g(s),s\}\in \mathcal K$ implies $g(s)=s$.
The $G$-semimatroid is called {\em ($k$-)refined} if, in addition to being translative, $G$ is a free Abelian group and there is $k\in \mathbb N$ such that, for every $x\in \mathcal K$, $\stab(x)$ is a direct summand of rank $k(\rho(\SSS)-\rho(x))$.
 \end{defi}

 Every $G$-action on a semimatroid $\SSS$ induces an action of $G$ by automorphisms on the posets $\mathcal I(\SSS)$ and $\mathcal L (\SSS)$.  If the $G$-semimatroid is translative, resp.\ refined, then so are the induced actions on both posets.
 Conversely, every action on $\mathcal I (\SSS)$ induces an action on $\SSS$ and every action on a geometric semilattice induces an action on the associated (simple) semimatroid.

 \begin{defi} Given a $G$-semimatroid $\mathfrak S:G\circlearrowright \SSS$  define  the posets
 $$
   \mathcal K_{\mathfrak S} := \mathcal K (\SSS) / G
 \quad\quad\quad
 \mathcal I_{\mathfrak S} := \mathcal I (\SSS) / G
 \quad\quad\quad
  \mathcal P_{\mathfrak S} := \mathcal L (\SSS) / G.
 $$
 \end{defi}
 
 \begin{rem} $\,$\label{rem:qps}
 \begin{itemize}
     \item[(i)] By the finiteness requirement in Definition \ref{def_gaos}, $\IS$, $\PS$ and $\mathcal K_{\mathfrak S}$ are all finite.
 
\item[(ii)] Both $\IS$ and $\PS$ are bounded-below and ranked of length $\rho(\SSS)$. The poset rank of an element $X$ of either poset equals $\rho(X)$.
  
\item[(iii)] If $\mathfrak S$ is a translative $G$-semimatroid, then $\mathcal I_{\mathfrak S}$ and $\mathcal K_{\mathfrak S}$ are simplicial posets.
  \end{itemize}
 \end{rem}

The above definitions and terminology were motivated in \cite{DR} by the case of periodic affine hyperplane arrangements related to Abelian arrangements, as we will discuss later. However, these definitions are strictly more general, see \cite{DR}.

\begin{defi} Let $E_{\mathfrak S}:=S/G$ denote the (finite) set of orbits of elements of $\SSS$. The {\em support} of an orbit is given by the function 
$$
\Supp: \mathcal K_{\mathfrak S} \to 2^{E_{\mathfrak S}}, \quad\quad
G\{x_1,\ldots,x_k\} \mapsto \{Gx_1,\ldots,Gx_k\}.
$$
For every $\alpha\in \mathcal K$, we will write $\supp(\alpha):=\Supp(G\alpha)$, the support of the orbit of $\alpha$.

In general, given $A\subseteq E_{\mathfrak S}$ we let $m_{\mathfrak S}(A):=\vert\Supp^{-1}(A) \vert$ be the number of all orbits $X \in \mathcal K_{\mathfrak S}$ such that $\Supp{X}=A$ - thus $m(A)\neq 0$ if and only if $A\in \Supp{\mathcal K_{\mathfrak S}}$.

Moreover, for $A\subseteq E_{\mathfrak S}$ write $\underline{\rho}(A)$ for the rank $\rho(X)$ of any $X\in \Supp^{-1}(A)$ and let $\underline{\SSS}:=(E_{\mathfrak S}, \Supp(\mathcal K_{\mathfrak S}),\underline{\rho})$.  
\end{defi}

\begin{rem}\label{rem:unse}
The set $\Supp(\mathcal I_{\mathfrak S})$ is a simplicial complex. The triple $\underline{\SSS}$ is a semimatroid 
 if and only if $\mathfrak S:G\circlearrowright \SSS$ is translative \cite[Theorem A]{DR}. In this case, $\Supp(\mathcal I_{\mathfrak S})$ is the associated simplicial complex of independent sets.
\end{rem}

\subsection{Tutte polynomials and $h$-polynomials}

\begin{defi} Let $\mathfrak S: G\circlearrowright \SSS$ denote a translative action of $G$ on a semimatroid $\SSS$ of rank $d$. The Tutte polynomial of $\mathfrak S$ is
$$
T_{\mathfrak S}(x,y):= \sum_{A\subseteq E_{\mathfrak S}}
m_{\mathfrak S}(A)(x-1)^{d-\underline{\rho}(A)}(y-1)^{\vert A \vert - \underline{\rho}(A)}.
$$
\end{defi}

\begin{rem}
Notice that $A\subseteq E_{\mathfrak S}$ is  central in the semimatroid $\underline{\SSS}$ if and only if $m_{\mathfrak S}(A)\neq 0$, hence $T_{\mathfrak S}(x,y)$ is a "weighting" of the Tutte polynomial of $\underline{\SSS}$ defined in \cite{Ard}. If $\mathfrak S$ is associated to the action of the group of translations of a periodic hyperplane arrangement and $\underline{\SSS}$ is in fact a matroid, then $T_{\mathfrak S}(x,y)$ is the arithmetic Tutte polynomial of the corresponding toric arrangement \cite{Moc1}.
\end{rem}

\begin{lem}\label{lem:pols}
Let $\mathfrak S$ be a translative $G$-semimatroid of rank $d$. Then
\begin{itemize}
    \item[(i)] 
The $h$-polynomial of the simplicial poset $\IS$ is $$h_{\IS}(t)=t^dT_{\mathfrak S}({1 / t},1),$$
and the characteristic polynomial of $\IS$ is
$\chi_{\IS}(t) = (-t)^dT_{\mathfrak S}(\frac{1-t}{t},1)$.
\item[(ii)] The characteristic polynomial of the poset $\mathcal P_{\mathfrak S}$ is
$$
\chi_{\mathcal P_{\mathfrak S}}(t)  = (-1)^d T_{\mathfrak S}({1-t},0).
$$
\end{itemize}
\end{lem}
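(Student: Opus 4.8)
The plan is to connect the Tutte polynomial $T_{\mathfrak S}(x,y)$ to the $f$-vectors (hence $h$- and characteristic polynomials) of $\IS$ and $\PS$ by exploiting the support map and the structure of the quotient semimatroid $\underline{\SSS}$. For part (i), I would first observe that by Remark \ref{rem:unse} the complex $\Supp(\mathcal I_{\mathfrak S})$ is precisely the independence complex of the semimatroid $\underline{\SSS}$, and that the support map $\Supp\colon \IS \to \Supp(\IS)$ has the key property that $m_{\mathfrak S}(A)$ counts exactly the elements of $\IS$ with support $A$ when $A$ is independent in $\underline{\SSS}$ (and $m_{\mathfrak S}(A) = 0$ otherwise for independent-type behaviour; in fact for an independent set $A$, $\underline\rho(A)=|A|$ forces the $(y-1)$-exponent to vanish when we set $y=1$). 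Concretely, specializing $y = 1$ in the definition kills every term with $|A| - \underline\rho(A) > 0$, i.e. every non-independent $A$, leaving
\[
T_{\mathfrak S}(x,1) = \sum_{\substack{A \subseteq E_{\mathfrak S} \\ A \text{ indep.}}} m_{\mathfrak S}(A)\,(x-1)^{d - |A|}.
\]
Since $\sum_{A \text{ indep}, |A| = i} m_{\mathfrak S}(A) = f_{i-1}(\IS)$ (the number of orbits of independent sets of size $i$, using that $\rho(X) = |X|$ on $\IS$ and part (ii) of Remark \ref{rem:qps}), this rearranges to $T_{\mathfrak S}(x,1) = \sum_{i=0}^d f_{i-1}(\IS)(x-1)^{d-i}$. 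Setting $x = 1/t$ and multiplying by $t^d$ gives $t^d T_{\mathfrak S}(1/t,1) = \sum_i f_{i-1}(\IS)(1-t)^{d-i}t^i$, which is exactly $h_{\IS}(t)$ from Definition \ref{df:fh}. The characteristic polynomial formula then follows either from Remark \ref{rem:chih} (relating $\chi_{\IS}$ and $h_{\IS}$) or directly by substituting $x = (1-t)/t$, since $\chi_{\IS}(t) = \sum_i f_{i-1}(\IS)(-1)^i t^{d-i} = (-t)^d \sum_i f_{i-1}(\IS)\big((1-t)/t - 1\big)^{d-i} \cdot (\text{sign bookkeeping})$; I would carry out this substitution carefully to confirm the sign $(-t)^d$.

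For part (ii), the analogous move is to specialize $y = 0$. This replaces $(y-1)^{|A|-\underline\rho(A)}$ by $(-1)^{|A|-\underline\rho(A)}$, so
\[
T_{\mathfrak S}(x,0) = \sum_{A \subseteq E_{\mathfrak S}} m_{\mathfrak S}(A)\,(x-1)^{d-\underline\rho(A)}(-1)^{|A|-\underline\rho(A)}.
\]
Now I would invoke the fact that $T_{\underline\SSS}$, the Tutte polynomial of the quotient semimatroid (or rather the weighted version $T_{\mathfrak S}$), evaluated along $y=0$, computes the characteristic polynomial of the geometric semilattice $\mathcal L(\underline\SSS)$ — and by \cite[Theorem A]{DR} together with Remark \ref{rem:unse}, $\mathcal L(\underline\SSS) \cong \PS$ as posets (the poset of flats of $\underline\SSS$ is the quotient $\mathcal P_{\mathfrak S}$). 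The classical identity for (semi)matroids reads $\chi(t) = (-1)^{\rho} T(1-t, 0)$ for the lattice/semilattice of flats; the weighted bookkeeping $m_{\mathfrak S}(A)$ corresponds exactly to counting flats of $\underline\SSS$ with given support, so the same identity holds here. Substituting $x = 1-t$ into the display above and grouping terms by rank $\underline\rho(A) = r$ yields $T_{\mathfrak S}(1-t,0) = \sum_r (-t)^{d-r}\big(\sum_{A:\underline\rho(A)=r} m_{\mathfrak S}(A)(-1)^{|A|-r}\big)$; the inner sum is (up to sign) the sum of Möbius-function values over flats of corank $d-r$, by a Whitney-type formula, giving $\chi_{\PS}(t) = (-1)^d T_{\mathfrak S}(1-t,0)$.

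The main obstacle I anticipate is part (ii): unlike part (i), which is a fairly mechanical specialization plus a combinatorial identification of $f$-vectors, part (ii) requires knowing that the weighted Tutte polynomial $T_{\mathfrak S}$ genuinely encodes the Möbius/characteristic data of $\PS = \mathcal L(\underline\SSS)$, which amounts to establishing (or citing) a Whitney-rank-generating-function identity for the semimatroid $\underline\SSS$ — namely that $T_{\underline\SSS}(x,0)$ with the $m_{\mathfrak S}$-weights specializes to $\sum_{F \text{ flat}} \mu(\hat 0, F) (\text{stuff})$. If a clean reference for semimatroids (e.g. Ardila's or the $G$-semimatroid framework of \cite{DR}, or Moci's work in the arithmetic-matroid case) is available, this collapses to a citation; otherwise I would need to prove the rank-generating identity by Möbius inversion over $\PS$, using that $\PS$ is a geometric semilattice and hence that each interval $[\hat 0, F]$ is a geometric lattice with a well-understood characteristic polynomial, then assembling the global identity via the broken-circuit / Whitney-number expansion. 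I would also double-check all the sign and exponent conventions ($d$ vs.\ corank, $|A|$ vs.\ $\underline\rho(A)$) against Definition \ref{df:fh} and Remark \ref{rem:chih}, since the statement commits to the precise signs $(-t)^d$ and $(-1)^d$.
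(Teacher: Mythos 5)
Your part (i) is exactly the paper's argument: setting $y=1$ kills all terms with $\vert A\vert>\underline{\rho}(A)$, the surviving coefficients $\sum_{\underline{\rho}(A)=\vert A\vert=i}m_{\mathfrak S}(A)$ are the $f_{i-1}(\IS)$ by Remark \ref{rem:qps}.(ii), and the substitution $x=1/t$ recovers $h_{\IS}$ from Definition \ref{df:fh}, with the characteristic polynomial following from Remark \ref{rem:chih}. For part (ii) the paper does precisely what you hoped was possible: it cites \cite[Theorem F]{DR} and stops, so no new argument is needed there.

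However, your sketched justification of (ii) contains a genuinely false intermediate claim: it is \emph{not} true in general that $\PS\cong\mathcal L(\underline{\SSS})$. The support map $\Supp\colon\PS\to\mathcal L(\underline{\SSS})$ is a rank-preserving surjection but typically not injective --- several orbits (layers) can share the same support, and this multiplicity is exactly what $m_{\mathfrak S}(A)$ records. This is the whole point of the weighted (arithmetic) Tutte polynomial and of Pagaria's examples mentioned in the introduction: the poset of layers is strictly finer data than the underlying semimatroid $\underline{\SSS}$. Consequently, invoking the classical identity $\chi(t)=(-1)^{\rho}T(1-t,0)$ for the semilattice of flats of $\underline{\SSS}$ computes the characteristic polynomial of the wrong poset. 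What rescues your computation is the weighted Whitney-type identity you correctly identify as the real obstacle: one needs, for each rank $r$,
\[
\sum_{\substack{A\subseteq E_{\mathfrak S}\\ \underline{\rho}(A)=r}} m_{\mathfrak S}(A)(-1)^{\vert A\vert-r}
\;=\;(-1)^{r}\sum_{\substack{X\in\PS\\ \rk(X)=r}}\mu_{\PS}(\hat 0,X),
\]
which does hold, but its proof must go through $\PS$ itself rather than $\mathcal L(\underline{\SSS})$: by Lemma \ref{lem:low} lower intervals of $\PS$ are geometric lattices, so $\mu_{\PS}(\hat 0,X)$ can be computed by the cross-cut (closure) theorem on a representative flat $x$, and translativity guarantees that $\stab(x)$ fixes $x$ pointwise, so the orbits in $\mathcal K_{\mathfrak S}$ whose closure-orbit is $X$ biject with the subsets $\alpha\subseteq x$ with $\operatorname{cl}(\alpha)=x$. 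Assembling these local identities gives $\chi_{\PS}(t)=(-1)^dT_{\mathfrak S}(1-t,0)$; this is essentially the content of \cite[Theorem F]{DR}. So your plan is repairable, but as written the appeal to an isomorphism $\PS\cong\mathcal L(\underline{\SSS})$ would fail, and the clean fix is the citation the paper uses.
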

\begin{proof} Item (ii) is \cite[Theorem F]{DR}. 
In order to prove item (i), start by noticing that the number of orbits of independent sets of rank $i$ in $\II_{\mathfrak S}$ is
$$
f_{i-1}(\IS)=\sum_{\substack{A\subseteq E_{\mathfrak S} \\ \underline{\rho}(A)=|A|=i}} m_{\mathfrak S}(A).
$$
Therefore, with Remark \ref{rem:qps}.(ii) we can write
$$
h_{\IS}(t)\stackrel{\textrm{df.}}{=}t^d\sum_{i=1}^d f_{i-1}(P) \bigg(\frac{1-t}{t}\bigg)^{d-i}=t^d \sum_{\substack{A\subseteq E_{\mathfrak S} \\ \underline{\rho}(A)=\vert A \vert}} m_{\mathfrak S}(A)\bigg(\frac{1-t}{t}\bigg)^{d-\underline{\rho}(A)}
$$
$$
=t^d\sum_{\substack{A\subseteq E_{\mathfrak S} }} m_{\mathfrak S}(A)\bigg(\frac{1}{t}-1\bigg)^{d-\underline{\rho}(A)}0^{\vert A \vert - \underline{\rho}(A)}\stackrel{\textrm{df.}}{=} t^d T_{\mathfrak S}\bigg(\frac{1}{t},1\bigg).
$$
The formula for the characteristic polynomial of $\IS$ follows with Remark \ref{rem:chih}.
\end{proof}

\subsection{Some structure theory of $G$-semimatroids}
Throughout this section let $\mathfrak S:G\circlearrowright \SSS$ denote the action of a group $G$ on a semimatroid $\SSS=(S,\mathcal K, \rho)$. First, let us recall the notion of contraction and deletion for semimatroids. In the following, given any collection $\mathcal X\subseteq 2^S$ of subsets of a set $S$ and given any $A\subseteq S$, we write $\mathcal X/A:=\{X\subseteq S\setminus A \mid X\cup A\in \mathcal X\}$ and $\mathcal X[A]:=\{X\in \mathcal X \mid X\subseteq A\}$ (the latter system of sets is commonly also written as $\mathcal X \setminus (S\setminus A)$).

\begin{defi}[see, e.g., {\cite[\S{} 7]{Ard}}] Let $\SSS=(S,\mathcal K, \rho)$ be a semimatroid and let $\alpha\subseteq S$. The  {\em restriction} of $\SSS$ to $\alpha$ is the semimatroid 
$$
\SSS[\alpha]:=(\alpha, \mathcal K[\alpha], \rho_{\vert \mathcal K[\alpha]})
$$
If $\alpha\in \mathcal K$, we can also define the {\em contraction} of $\alpha$ in $\SSS$ as
$$
\SSS/\alpha :=(S\setminus \alpha, \mathcal K / \alpha, \rho_{/\alpha})
$$
where $\rho_{/\alpha}(X):=\rho(X\cup \alpha)-\rho(\alpha)$ for all $X\in \mathcal K/\alpha$.
\end{defi}

\begin{rem}\todo{Controlla}
For every $A\subseteq \Supp(\mathcal K_{\mathfrak S})$ we can choose $\alpha\in \mathcal K$ such that $\supp{\alpha}=A$. If $G$ is abelian and  $\mathfrak S$ is translative, then $\stab(\alpha)=\bigcap_{s\in \alpha}\stab(s)$ does not depend on the choice of $\alpha$. In particular, $\stab(A)$ is well-defined as the stabiliser of any such $\alpha$.
\end{rem}

\begin{defi}
Let $\mathfrak S: G\circlearrowright \SSS$ be a refined $G$-semimatroid and let $X\subseteq E_{\mathfrak S}$. As is customary, write $\cup X:=\bigcup_{x\in X} x$. The {\em restriction} of $\mathfrak S$ to $X$ is
$$
\mathfrak S[X]: G/\stab(X)\circlearrowright \SSS[\cup X].
$$

Moreover, for any $A \in {\mathcal K_{\mathfrak S}}$ we define the {\em contraction}
$$
\mathfrak S/A : \stab(\alpha)\circlearrowright \SSS/\alpha
$$
where $\alpha$ is any representative of the orbit $A$, i.e., $A=G\alpha$.
\end{defi} 

\todo{ESEMPIO!!}

\begin{rem}\label{rem:minorsup}
For every $\alpha\in\mathcal K$ there are poset isomorphisms 
$$\mathcal L (\SSS/\alpha) \simeq 
\mathcal L (\SSS)_{\geq \operatorname{cl}(\alpha)}, 
\quad\quad \mathcal I (\SSS/\alpha) \simeq  \mathcal I (\SSS)_{\geq \beta}
$$ where  $\beta$ is any maximal independent subset of $\alpha$ and $\operatorname{cl}$ denotes semimatroid closure (see \cite[Definition 3.27]{DR}). Thus,
$$\mathcal P_{\mathfrak S / G\alpha} 
\simeq (\mathcal P_{\mathfrak S} )_{\geq G\alpha}
$$ for every $G\alpha\in \mathcal P_{\mathfrak S}$, and
$$\mathcal I_{\mathfrak S / G\alpha} 
\simeq (\mathcal I_{\mathfrak S} )_{\geq G\beta}$$
for all $\alpha\in \mathcal K$ and every maximal independent $\beta\subseteq \alpha$.
\end{rem}

\begin{rem}
From Lemma \ref{lem:up} and Lemma \ref{lem:sg} follows that restrictions and contractions of refined $G$-semimatroids are refined. 
\end{rem}

\subsection{Refined quotients of independence complexes}

In this section let $\mathfrak S: G\circlearrowright \SSS$ be a refined action.

\begin{defi}
Given any $X=\{x_1,\ldots,x_k\}\in \Supp({\mathcal I_{\mathfrak S}})$ let
$$
G^{(X)}:=G/\stab_G(X), \quad\quad H^{(X)}:=\bigoplus_{i=1}^k H^{(X)}_i \quad
\textrm{ where }H^{(X)}_i:=\stab_{G^{(X)}}(X\setminus \{x_i\}).
$$ 
Then $H^{(X)}$ is free abelian and of maximal rank in $G^{(X)}$, thus we can define the number
$$
\delta_{\mathfrak S}(X):=[G^{(X)}:H^{(X)}].
$$
Moreover, let
$$
\delta_{\mathfrak S} := \operatorname{lcm} \{\delta_{\mathfrak S}(X) \mid X\in \Supp{\mathcal I_{\mathfrak S}}\}.
$$
\end{defi}

This definition is motivated by the following fact.

\begin{lem} \label{lem_scm}
Let $X\in \Supp({\overline{\mathcal I_{\mathfrak S}}})$. Then $\overline{\mathcal I_{\mathfrak S [X]}}$ is well-connected in characteristic $0$ and every characteristic not dividing $\delta_{\mathfrak S}(X)$.
\end{lem}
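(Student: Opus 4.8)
The plan is to factor the refined action $\mathfrak S[X]\colon G^{(X)}\circlearrowright\SSS[\cup X]$ through its finite-index subgroup $H^{(X)}\leq G^{(X)}$: first pass to the quotient by $H^{(X)}$ --- which keeps us inside the world of decoupled actions and homotopy Cohen--Macaulayness --- and then pass to the quotient by the residual finite group $\Gamma:=G^{(X)}/H^{(X)}$, whose order is exactly $\delta_{\mathfrak S}(X)$; this last step is where the restriction on the characteristic enters, through Bredon's lemma. Concretely, set $k:=|X|$ and let $\Sigma:=\mathcal I(\SSS[\cup X])$ be the independence complex of the restriction. It is pure of dimension $k-1$: its facets are the bases of the rank-$k$ semimatroid $\SSS[\cup X]$, and by translativity each such basis has exactly one vertex over every orbit $x_i$, hence support $X$. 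Moreover $\Sigma$ is homotopy Cohen--Macaulay, being the independence complex of a semimatroid (the matroidal case is classical, see \cite{BjoAltro}; cf.\ \cite{DR}). Finally, by Lemma \ref{lem:sg} the action $\mathfrak S[X]$ is again refined, hence translative, and so is the induced action of the subgroup $H^{(X)}=\bigoplus_{i=1}^{k}H^{(X)}_i$.

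The heart of the argument is that the $H^{(X)}$-action on $\Sigma$ is \emph{decoupled} in the sense of Definition \ref{df:decoupled}, with respect to the decomposition $H^{(X)}=\bigoplus_{i=1}^{k}H^{(X)}_i$ from the definition of $\delta_{\mathfrak S}(X)$. Indeed, fix a facet $\beta=\{v_1,\dots,v_k\}$ of $\Sigma$, labelled so that $v_i$ lies over the orbit $x_i$. Then $\beta\setminus\{v_i\}$ lies in $\mathcal K(\SSS[\cup X])$ and has support $X\setminus\{x_i\}$; since for a translative action of an abelian group the stabiliser of a face depends only on its support, $\stab_{G^{(X)}}(\beta\setminus\{v_i\})=H^{(X)}_i$, and intersecting with $H^{(X)}$ --- which contains every $H^{(X)}_i$ --- gives $\stab_{H^{(X)}}(\beta\setminus\{v_i\})=H^{(X)}_i$. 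As each $H^{(X)}_i$ is a nonzero free abelian group, all the conditions of Definition \ref{df:decoupled} hold; the degenerate case $k=1$ instead requires $\stab_{H^{(X)}}(v)=\{0\}$ for every vertex $v$, which holds because $\{v\}$ has support $X$ and therefore $\stab_{G^{(X)}}(v)=\stab_{G^{(X)}}(X)=\{0\}$. Theorem \ref{thm:hcm} then yields that $\overline{P_\Sigma/H^{(X)}}$ is homotopy Cohen--Macaulay, hence Cohen--Macaulay over every field.

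To conclude I would pass to the finite quotient. The subgroup $H^{(X)}$ is normal in the abelian group $G^{(X)}$, the quotient $\Gamma=G^{(X)}/H^{(X)}$ is finite of order $[G^{(X)}:H^{(X)}]=\delta_{\mathfrak S}(X)$, and by Lemma \ref{lem:quac} it acts translatively on $P_\Sigma/H^{(X)}$ with $(P_\Sigma/H^{(X)})/\Gamma=P_\Sigma/G^{(X)}=\mathcal I_{\mathfrak S[X]}$. By Lemma \ref{lem_bredon} the induced $\Gamma$-action on the order complex $\Delta(\overline{P_\Sigma/H^{(X)}})$ is regular, with quotient (up to the usual care about the adjoined extremal element) the order complex $\Delta(\overline{\mathcal I_{\mathfrak S[X]}})$. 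Since $\Delta(\overline{P_\Sigma/H^{(X)}})$ is Cohen--Macaulay over every field, Bredon's lemma \cite[Chapter III]{Bredon} shows that $\Delta(\overline{\mathcal I_{\mathfrak S[X]}})$ is Cohen--Macaulay over every field of characteristic $0$ or prime to $|\Gamma|=\delta_{\mathfrak S}(X)$; in particular $\overline{\mathcal I_{\mathfrak S[X]}}$ is well-connected over all such fields, which is the assertion.

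The step I expect to be most delicate is the verification that the $H^{(X)}$-action on $\Sigma$ is decoupled --- that the cofacet stabilisers of $\Sigma$ coincide uniformly with the prescribed summands $H^{(X)}_i$ over all facets --- together with the bookkeeping required to run Bredon's lemma at the level of order complexes (the regularity statement and the reconciliation between removing and re-adjoining extremal elements of $\mathcal I_{\mathfrak S[X]}$). By contrast, the homotopy Cohen--Macaulayness of the independence complex $\Sigma$, though indispensable, is a standard input.
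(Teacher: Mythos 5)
Your proposal is correct and follows essentially the same route as the paper: verify that the $H^{(X)}$-action on $\mathcal I(\SSS[\cup X])$ is decoupled with respect to the decomposition $H^{(X)}=\bigoplus_i H^{(X)}_i$, invoke Theorem \ref{thm:hcm} to get homotopy Cohen--Macaulayness of the intermediate quotient, and then pass to the finite quotient by $G^{(X)}/H^{(X)}$ via Lemma \ref{lem:quac}, Lemma \ref{lem_bredon} and Bredon's theorem. The only (harmless) difference is cosmetic: you identify the cofacet stabilisers using the fact that stabilisers depend only on supports, whereas the paper argues via freeness of the action on facets, and you additionally spell out the degenerate case $|X|=1$ explicitly.
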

\begin{proof}
Recall that $\mathfrak S [X]$ is  $G^{(X)}\circlearrowright\SSS[\cup X]$, and induces a refined action $G^{(X)}\circlearrowright\mathfrak I [\cup X]$. The induced action $H^{(X)}\circlearrowright \mathcal I[\cup X]$ is decoupled with respect to the decomposition $H^{(X)}=\bigoplus_{i=1}^{k}H_i^{(X)}$. In order to see this notice first that, since $\mathfrak S [X]$ is refined, the rank of the (free abelian) group $H_i^{(X)}$ is a nonzero multiple of $\underline{\rho}(X)=\vert X \vert$, and since $X$ is not empty by assumption we conclude that $H_i^{(X)}$ is not the trivial group. The fact that the action of $G^{(X)}$ is refined also implies that the stabilizer of any maximal face of $\mathcal I[\cup X]$ is $\stab_{H^{(X)}}(X)\subseteq\stab_{G^{(X)}}(X)=\{0\}$, thus the action of $H^{(X)}$ is free on facets. 
Now write $X=\{x_1,\ldots,x_k\}$. Given $\beta=\{s_1,\ldots,s_k\}$ with $s_i\in x_i$ for all $i$, 
we have $
\stab_{H^{(X)}}(\beta\setminus \{s_i\})=
\stab_{G^{(X)}}(X\setminus x_i)=H_i^{(X)}
$, where the first equality holds because freeness on facets implies that in the definition of $H^{(X)}$ every summand other than $\stab_{G^{(X)}}(X\setminus x_i)$ fails to stabilize some element of $\beta\setminus \{s_i\}$. 

Thus,  $\overline{\mathcal I[\cup X]/H^{(X)}}$ is homotopy Cohen-Macaulay by Theorem \ref{thm:hcm}. Moreover, the action of the (finite) group $K:=G^{(X)}/H^{(X)}$ on $\mathcal I[\cup X]/H^{(X)}$ is translative (Lemma \ref{lem:quac}.(b)). Now by Lemma \ref{lem:quac}.(a) we have $\mathcal I_{\mathfrak S[X]}\simeq(\mathcal I[\cup X]/H^{(X)})/K$, and the claim follows with Lemma \ref{lem_bredon} and \cite[Chapter III, Theorem 2.4]{Bredon}. 
\end{proof}

We now proceed with two properties of the numbers $\delta_{\mathfrak S}$.

\begin{lem}\label{lem:sempli} If $X\leq Y$ in $\Supp(\mathcal I_{\mathfrak S})$, then $\delta_{\mathfrak S}(X)$ divides $\delta_{\mathfrak S}(Y)$. In particular
$$
\delta_{\mathfrak S} = \operatorname{lcm}\{\delta_{\mathfrak S}(B) \mid B\in \max \Supp(\mathcal I_{\mathfrak S})\},
$$
i.e., $\delta_{\mathfrak S}$ can be computed as the least common multiple of the $\delta_{\mathfrak S}(B)$ where $B$ ranges over the bases of the semimatroid $\underline{\mathscr S}$.
\end{lem}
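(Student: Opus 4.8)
The plan is to reduce the statement about the global invariant $\delta_{\mathfrak S}$ to the pointwise divisibility claim, and then to prove the latter by relating the groups $H^{(X)}$ and $H^{(Y)}$ via the minor operations already developed in the paper. For the reduction: once we know that $X\le Y$ in $\Supp(\mathcal I_{\mathfrak S})$ implies $\delta_{\mathfrak S}(X)\mid\delta_{\mathfrak S}(Y)$, then every $X\in\Supp(\mathcal I_{\mathfrak S})$ is contained in some maximal element $B\in\max\Supp(\mathcal I_{\mathfrak S})$ (the maximal independent sets, i.e.\ the bases of $\underline{\mathscr S}$, cf.\ Remark \ref{rem:unse}), so $\delta_{\mathfrak S}(X)\mid\delta_{\mathfrak S}(B)$; hence the least common multiple over all $X$ equals the least common multiple over the bases, giving the displayed formula.

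For the divisibility claim itself, I would first reduce to the case $X\lessdot Y$ (a covering relation), i.e.\ $Y=X\cup\{y\}$ for a single orbit $y\in E_{\mathfrak S}$; the general case then follows by transitivity of divisibility along a saturated chain from $X$ to $Y$. Write $Y=\{x_1,\dots,x_k,y\}$ with $X=\{x_1,\dots,x_k\}$. The key is to understand $G^{(X)}=G/\stab_G(X)$ and $G^{(Y)}=G/\stab_G(Y)$ together with the summands $H^{(X)}_i=\stab_{G^{(X)}}(X\setminus\{x_i\})$ and $H^{(Y)}_i=\stab_{G^{(Y)}}(Y\setminus\{x_i\})$ (and $H^{(Y)}_y=\stab_{G^{(Y)}}(X)$). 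Since $X\le Y$ we have $\stab_G(Y)\subseteq\stab_G(X)$ by translativity (Remark \ref{rem_ta}.(ii) and Remark \ref{rem_traeq}), so there is a canonical surjection $G^{(Y)}\twoheadrightarrow G^{(X)}$ with kernel $\stab_G(X)/\stab_G(Y)$; because the action is refined, $\stab_G(X)$ is a direct summand of $G$ containing $\stab_G(Y)$, and $\stab_G(Y)$ is pure in $\stab_G(X)$, so $\stab_G(X)/\stab_G(Y)$ is itself free abelian and a direct summand of $G^{(Y)}$. I would then identify this kernel: by property \stella{} its rank is $k\bigl((d-\rk(X))-(d-\rk(Y))\bigr)=k$ when the refinement constant is $k$, and in fact it should coincide with $H^{(Y)}_y=\stab_{G^{(Y)}}(X)$, since any element of $G$ stabilising $X$ but not $Y$ maps into this kernel and, via Lemma \ref{lem:sg} applied to the contraction $\mathfrak S/Gy$ or equivalently the restriction, has the right stabiliser behaviour.

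Granting that $\ker\bigl(G^{(Y)}\to G^{(X)}\bigr)=H^{(Y)}_y$, the surjection $G^{(Y)}\to G^{(X)}$ carries $H^{(Y)}=\bigoplus_{i=1}^k H^{(Y)}_i\oplus H^{(Y)}_y$ onto $H^{(X)}=\bigoplus_{i=1}^k H^{(X)}_i$: indeed the image of $H^{(Y)}_i$ lands in $\stab_{G^{(X)}}(X\setminus\{x_i\})=H^{(X)}_i$ (stabilisers only grow under passing to the smaller set $X\setminus\{x_i\}\subseteq Y\setminus\{x_i\}$, using translativity and abelianness), and a rank count via \stella{} shows these images exhaust $H^{(X)}_i$. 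Therefore we get a commutative square of free abelian groups with exact rows $0\to H^{(Y)}_y\to H^{(Y)}\to H^{(X)}\to 0$ and $0\to H^{(Y)}_y\to G^{(Y)}\to G^{(X)}\to 0$ (using that $H^{(Y)}_y$ is precisely the kernel in both cases, since $H^{(Y)}_y=\stab_{G^{(Y)}}(X)$ and this is the full kernel). The snake lemma (or simply the multiplicativity of indices in the tower $H^{(Y)}\subseteq (\text{preimage of }H^{(X)})\subseteq G^{(Y)}$) then yields
\[
\delta_{\mathfrak S}(Y)=[G^{(Y)}:H^{(Y)}]=[G^{(X)}:H^{(X)}]\cdot[\,\text{something}\,]=\delta_{\mathfrak S}(X)\cdot[\,\ker\to\ker\,],
\]
and since the induced map on kernels $H^{(Y)}_y\to H^{(Y)}_y$ is the identity, the cokernel contributions match up so that $\delta_{\mathfrak S}(X)\mid\delta_{\mathfrak S}(Y)$.

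The main obstacle I anticipate is the bookkeeping in the previous paragraph: verifying carefully that the kernel of $G^{(Y)}\to G^{(X)}$ is exactly $H^{(Y)}_y$ (not merely a finite-index subgroup of it), and that the restriction of this surjection to $H^{(Y)}$ surjects onto $H^{(X)}$ with the same kernel $H^{(Y)}_y$ rather than a proper subgroup. Both points hinge on combining translativity (to force stabilisers into direct summands), abelianness of $G$ (so that $\stab_G(\alpha)=\bigcap_{s\in\alpha}\stab_G(s)$ behaves monotonically, Remark \ref{rem_traeq}), and the exact rank prescription \stella{} from the definition of refined action, together with Lemma \ref{lem:sg} which guarantees the minors are again refined with the same constant $k$. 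Once the two exact sequences are in place, the divisibility is a one-line consequence of multiplicativity of indices, and the lcm reformulation is immediate.
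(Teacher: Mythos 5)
Your skeleton is essentially the paper's: translativity gives $\stab_G(Y)\subseteq\stab_G(X)$, hence a surjection $\pi\colon G^{(Y)}\twoheadrightarrow G^{(X)}$; one checks $\pi(H^{(Y)})\subseteq H^{(X)}$; and one compares indices. The parenthetical fallback you offer --- multiplicativity of indices in the tower $H^{(Y)}\subseteq\pi^{-1}(H^{(X)})\subseteq G^{(Y)}$, which needs only the \emph{containment} $\pi(H^{(Y)})\subseteq H^{(X)}$ --- is a complete and correct proof, and it is what the paper does (phrased as: the induced map $q\colon G^{(Y)}/H^{(Y)}\to G^{(X)}/H^{(X)}$ is surjective because $\pi$ is, so the order of the finite target divides the order of the finite source). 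Your reduction of the $\operatorname{lcm}$ formula to the pointwise divisibility is fine, and the reduction to covering relations is harmless but unnecessary.

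However, the main route you propose rests on a false claim: that $\pi(H^{(Y)}_i)=H^{(X)}_i$, justified by ``a rank count via \stella''. The rank count (together with injectivity of $\pi$ on $H^{(Y)}_i$) only shows that $\pi(H^{(Y)}_i)$ is a full-rank, hence finite-index, subgroup of $H^{(X)}_i$; it can be proper. Concretely, $\pi(H^{(Y)}_i)=(\stab_G(Y\setminus\{x_i\})+\stab_G(X))/\stab_G(X)$, and this sum of stabilizers is contained in, but generally strictly smaller than, $\stab_G(X\setminus\{x_i\})$: in Example \ref{RE:B}, for $X=\{Gx_1\}$ and $Y=\{Gx_1,Gx_2\}$ the sum $\stab_G(x_1)+\stab_G(x_2)$ has index $5$ in $G=\stab_G(\emptyset)$, so $\pi(H^{(Y)}_1)$ has index $5$ in $H^{(X)}_1$. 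This is not a repairable bookkeeping detail: if your sequence $0\to H^{(Y)}_y\to H^{(Y)}\to H^{(X)}\to 0$ were exact, the snake lemma against $0\to H^{(Y)}_y\to G^{(Y)}\to G^{(X)}\to 0$ would force $G^{(Y)}/H^{(Y)}\simeq G^{(X)}/H^{(X)}$, i.e.\ $\delta_{\mathfrak S}(X)=\delta_{\mathfrak S}(Y)$, which is false (in that same example $\delta_{\mathfrak S}(X)=1$ for a singleton while $\delta_{\mathfrak S}(B)=5$ for any basis $B\supseteq X$). So you should drop the attempt to identify the image of $H^{(Y)}$ exactly and keep only the containment, from which the divisibility follows at once.
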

\begin{proof} Translativity of the action implies $\stab_G(Y)\subseteq \stab_G(X)$ and thus the existence of a surjective (quotient) homomorphism $\pi:G^{(Y)}\to G^{(X)}$. Now, for every $y\in Y\setminus X$, $\stab_{G}(Y\setminus \{y\})\subseteq \stab_G (X)$ and therefore $\pi(\stab_{G^{(Y)}}(Y\setminus \{y\}))=\{0\}$. On the other hand, if $x\in X$ then $\stab_{G}(Y\setminus \{x\})\subseteq \stab_G (X\setminus \{x\})$ and, passing to the quotients, $\pi(\stab_{G^{(Y)}}(Y\setminus \{x\}))\subseteq \stab_{G^{(X)}} (X\setminus \{x\})$. Thus, $\pi$ restricts to a map $H^{(Y)}\to H^{(X)}$, and so it induces a group homomorphism $q: G^{(Y)}/H^{(Y)} \to G^{(X)}/H^{(X)}$ fitting in the following diagram where we see that (e.g. by the Snake Lemma) the cokernel of $q$ is trivial.
\begin{center}
\begin{tikzcd}
0\arrow[r]&
H^{(Y)}\arrow[r,hook]\arrow[d,dashed]
& G^{(Y)} \arrow[r]\arrow[d,twoheadrightarrow,"\pi"]
& G^{(Y)}/H^{(Y)} \arrow[r]\arrow[d,"q"] & 0 \\
0\arrow[r]&H^{(X)}\arrow[r,hook]
& G^{(X)} \arrow[r]\arrow[d]
& G^{(X)}/H^{(X)} \arrow[r]\arrow[d]&0 \\
& & 0\arrow[r] & \operatorname{coker}(q) \arrow[r] & 0 
\end{tikzcd}
\end{center}
Surjectivity of $q$ implies that $\delta_{\mathfrak S}(X)=\vert G^{(X)}/H^{(X)}\vert$ divides $\delta_{\mathfrak S}(Y)=\vert G^{(Y)}/H^{(Y)}\vert$.
\end{proof}

\begin{lem}\label{lem_divide}
Let $A\in \mathcal K_{\mathfrak S}$. Then 
$
\delta_{\mathfrak S/A}
\textrm{ divides } \delta_{\mathfrak S}
$. 
In particular, $CM(\delta_{\mathfrak S/A})$ implies $CM(\delta_{\mathfrak S})$.
\end{lem}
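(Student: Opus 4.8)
The plan is to peel off $A$ using rank bookkeeping in the free abelian group $G$, reducing everything to one comparison of subgroup indices. Write $d := \rho(\SSS)$, fix a representative $\alpha \in \mathcal K$ of the orbit $A$, pick a maximal independent subset $\beta \subseteq \alpha$ (so $|\beta| = \rho(\alpha) =: r$), and put $K := \stab_G(\alpha)$, so that $\mathfrak S/A$ is the $k$-refined $K$-semimatroid $K \circlearrowright \SSS/\alpha$ of rank $d-r$ (refinedness of contractions having been recorded above). Applying Lemma~\ref{lem:sempli} to $\mathfrak S/A$, it suffices to show that $\delta_{\mathfrak S/A}(X)$ divides $\delta_{\mathfrak S}$ for every basis $X$ of $\underline{\SSS/\alpha}$. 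I would fix such an $X$, lift it to an independent set $\xi = \{s_1,\dots,s_m\} \in \mathcal I(\SSS/\alpha)$ with $\Supp(K\xi) = X$ — translativity forcing $m = d-r$ — and then use the isomorphism $\mathcal I(\SSS/\alpha) \simeq \mathcal I(\SSS)_{\geq\beta}$ of Remark~\ref{rem:minorsup} to see that $\xi$ corresponds to the (disjoint) union $B := \xi \cup \beta$, which is a basis of $\SSS$. Setting $Y := \supp(B)$, a basis of $\underline{\SSS}$, we have $\delta_{\mathfrak S}(Y) \mid \delta_{\mathfrak S}$ by definition, so the whole statement comes down to proving $\delta_{\mathfrak S/A}(X) \mid \delta_{\mathfrak S}(Y)$.

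The next step is to express both indices inside $G$. Since $B$ and $\xi$ are bases, condition \stella{} gives $\stab_G(B) = \{0\}$ and $\stab_K(\xi) = \{0\}$; hence $G^{(Y)} = G$, the group attached to $X$ in the definition of $\delta_{\mathfrak S/A}$ is just $K$, and each $\stab_G(B\setminus b)$ ($b \in B$) is free abelian of rank $k$. Writing $M_\xi := \bigoplus_{i=1}^m \stab_G(B\setminus s_i)$ and $M_\beta := \bigoplus_{t\in\beta}\stab_G(B\setminus t)$, the definitions unwind to $H^{(Y)} = M_\xi \oplus M_\beta$, $\delta_{\mathfrak S}(Y) = [G : M_\xi\oplus M_\beta]$, and $\delta_{\mathfrak S/A}(X) = [K : N]$ where $N := \bigoplus_{i=1}^m \stab_K(\xi\setminus s_i)$. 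Feeding in translativity in the form $\stab_G(\gamma) = \bigcap_{s\in\gamma}\stab_G(s)$ ($\gamma\in\mathcal K$) and the inclusion $K = \stab_G(\alpha)\subseteq\stab_G(\beta)$ (because $\beta\subseteq\alpha$), one identifies $\stab_K(\xi\setminus s_i) = \stab_G(\alpha)\cap\stab_G(B\setminus s_i)$, so that $N \subseteq K \cap M_\xi$.

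The crux is two rank counts, both powered by \stella{} and by the fact that $K$ is a \emph{direct summand} of $G$ (whence $G/K$ is torsion-free). First, $N$ and $M_\xi$ have the same rank $km = k(d-r) = \rk K$; thus $M_\xi/(K\cap M_\xi)$ is a torsion group embedding into the torsion-free $G/K$, so it vanishes, i.e.\ $M_\xi \subseteq K$, and then each $\stab_G(B\setminus s_i) = \stab_G(B\setminus s_i)\cap K = \stab_K(\xi\setminus s_i)$, giving $N = M_\xi$. Second, $\rk(K + M_\beta) \geq \rk(M_\xi\oplus M_\beta) = \rk H^{(Y)} = kd = \rk G$, so $\rk(K\cap M_\beta) = \rk K + \rk M_\beta - \rk(K + M_\beta) = 0$ and hence $K\cap M_\beta = \{0\}$, i.e.\ $K + M_\beta = K\oplus M_\beta$. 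Multiplicativity of the index then gives
\[
\delta_{\mathfrak S}(Y) = [G : M_\xi\oplus M_\beta] = [G : K\oplus M_\beta]\cdot[K\oplus M_\beta : M_\xi\oplus M_\beta] = [G : K\oplus M_\beta]\cdot[K : M_\xi],
\]
and since $\delta_{\mathfrak S/A}(X) = [K : N] = [K : M_\xi]$ we get $\delta_{\mathfrak S/A}(X)\mid\delta_{\mathfrak S}(Y)\mid\delta_{\mathfrak S}$. Taking the least common multiple over all bases $X$ of $\underline{\SSS/\alpha}$ yields $\delta_{\mathfrak S/A}\mid\delta_{\mathfrak S}$; the degenerate cases $\alpha=\emptyset$ and $\rho(\SSS/\alpha)=0$ are trivial. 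For the final sentence, note that any characteristic not dividing $\delta_{\mathfrak S}$ a fortiori does not divide $\delta_{\mathfrak S/A}$, so every coefficient ring for which $CM(\delta_{\mathfrak S/A})$ asks Cohen--Macaulayness includes every ring relevant to $CM(\delta_{\mathfrak S})$; hence $CM(\delta_{\mathfrak S/A})$ implies $CM(\delta_{\mathfrak S})$.

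I expect the only real difficulty to be organizational bookkeeping: keeping the three nested layers of stabilizers straight (those computed in $G$, those in $K = \stab_G(\alpha)$, and those in the quotient groups $G^{(X)}$ and $G^{(Y)}$), and checking carefully that the chosen lift $B = \xi\cup\beta$ really is a basis of $\SSS$, so that \stella{} supplies the sharp rank values $0$, $k$, $k(d-r)$, $kd$ that make the two rank counts close up. No individual step is deep once those identifications are in hand.
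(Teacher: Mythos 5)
Your proof is correct, and it shares the paper's overall skeleton --- fix a representative $\alpha$ of $A$, choose a maximal independent $\beta\subseteq\alpha$ with $K=\stab_G(\alpha)=\stab_G(\beta)$, lift a support $X$ from the contraction to an independent set $\beta\uplus\xi$ of $\SSS$, and compare the two index definitions group-theoretically --- but the execution of the key divisibility step is genuinely different. The paper does not reduce to bases: for \emph{every} $X\in\Supp(\mathcal I_{\mathfrak S/A})$ it passes to the quotient group $G^{(X')}$ (where $X'=\Supp(G(\beta\uplus\sigma))$), identifies $K/\stab_K(X)\cong\stab_{G^{(X')}}(\beta)$, and gets $\delta_{\mathfrak S/A}(X)\mid\delta_{\mathfrak S}(X')$ by exhibiting an injection of $\stab_{G^{(X')}}(\beta)/U$ into $G^{(X')}/H^{(X')}$ (Lagrange). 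You instead invoke Lemma \ref{lem:sempli} for $\mathfrak S/A$ so that only bases matter; then $\stab_G(B)=\stab_K(\xi)=\{0\}$, all groups live inside $G$ itself, and the divisibility falls out of two rank counts powered by \stella{} and purity of $K$ (giving $N=M_\xi$ and $K\cap M_\beta=\{0\}$) together with index multiplicativity in the tower $M_\xi\oplus M_\beta\le K\oplus M_\beta\le G$; this even yields the sharper identity $\delta_{\mathfrak S}(Y)=[G:K\oplus M_\beta]\cdot\delta_{\mathfrak S/A}(X)$, at the price of only treating maximal $X$ (which suffices for the lemma, whereas the paper's argument gives the per-$X$ statement $\delta_{\mathfrak S/A}(X)\mid\delta_{\mathfrak S}(X')$ for all $X$). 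Two small points of hygiene: what forces $m=|\xi|=d-r$ is not translativity alone (that only gives $|\xi|=|X|$) but the fact that $\underline{\SSS/\alpha}$ is a semimatroid (Remark \ref{rem:unse}, which does use translativity) whose maximal independent sets all have full rank $\rho(\SSS/\alpha)=d-r$ --- the same purity the paper uses when it speaks of ``bases'' in Lemma \ref{lem:sempli}; and your rank computations for $\stab_K(\xi\setminus s_i)$ rely on the contraction being $k$-refined with the \emph{same} $k$, which is indeed what Lemma \ref{lem:sg}.(i) records, so the dependence is legitimate but worth citing explicitly.
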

\begin{proof}
Choose $\alpha\in \mathcal K$ such that $A=G\alpha$ and a $\beta=\{s_1,\ldots,s_k\}\in \mathcal I$ maximal such that $\beta\subseteq \alpha$. Then, let $K:=\stab_G(\alpha)=\stab_G(\beta)$ (the equality because of \cite[Lemma 8.1.(b)]{DR}, where only translativity is used) and notice that $\mathcal I_{\mathfrak S / A}\simeq \mathcal I_{\geq \beta}/K$ (by Remark \ref{rem:minorsup} and Lemma \ref{lem:up}
.(ii)). 

Given $X\in \Supp(\mathcal I_{\mathfrak S/A})$, choose $\sigma=\{s_{k+1},\ldots, s_l\}$ such that $X=\Supp(K\sigma)$ and $\beta\uplus \sigma\in \mathcal I$, and let $$X':=\Supp(G(\beta\uplus\sigma))\in \Supp(\mathcal I_{\mathfrak S}).$$
It is now enough to prove that, for every such $X$, $\delta_{\mathfrak S/A}(X)$ divides $\delta_{\mathfrak S}(X')$. 
In order to do that, first notice that with the notation introduced above we have
\begin{align*}
K/\stab_K(X) 
&\stackrel{\operatorname{df}}{=}\stab_G(\beta)/\stab_{\stab_G(\beta)}(\sigma)
\stackrel{\operatorname{df}}{=}\stab_G(\beta)/\stab_G(\beta)\cap\stab_G(\sigma)\\
&=\stab_G(\beta)/\stab_G(\beta\cup\sigma)
\stackrel{\operatorname{df}}{=}\stab_{G^{(X')}}(\beta)
\end{align*} 
where the equality at the break of the line uses translativity of the action (Remark \ref{rem_traeq}).
Using this identity and noticing that, for every $i=k+1,\ldots,l$ we have  $$\stab_{\stab_{G^{(X')}}(\beta)}(\sigma\setminus\{s_i\}) = \stab_{G^{(X')}}(\beta\uplus \sigma\setminus\{s_i\}),$$ we can expand the definitions.
We start by writing
$$
\delta_{\mathfrak S/A}(X) = 
\left[
\stab_{G^{(X')}}(\beta):
\bigoplus_{i=k+1}^l\stab_{G^{(X')}}(\beta\uplus \sigma\setminus\{s_i\})
\right].
$$
Let us call $U$ the direct sum on the right-hand side. We compare this with
$$
\delta_{\mathfrak S}(X') = 
\left[
G^{(X')}:
\bigoplus_{i=1}^l\stab_{G^{(X')}}((\beta\uplus \sigma)\setminus\{s_i\})
\right].
$$
Now since the action of $G^{(X')}$ is refined, we can write $G^{(X')}=\stab_{G^{(X')}}(\beta) \oplus W$ for some subgroup $W$. Setting for brevity 
$U':= \bigoplus_{i=1}^{k}\stab_{G^{(X')}}(\beta\uplus\sigma\setminus\{s_i\})$, it is now enough to prove that the map
$$
f: 
\frac{\stab_{G^{(X')}}(\beta)}{U} \to 
\frac{\stab_{G^{(X')}}(\beta)\oplus W}{U\oplus U'}, \quad g+ U \mapsto g\oplus 0 + U\oplus U' 
$$
is an injection. In fact, in this case $\delta_{\mathfrak S/A} (X)= \vert \stab_{G^{(X')}}(\beta)/U \vert$ is the cardinality of a subgroup of a group of cardinality $\delta_{\mathfrak S}(X')$, thus the former divides the latter.

Now injectivity of $f$ is a straightforward computation, thus the lemma is proved.
\end{proof}

\begin{thm}\label{thm_CMSM}
Let $\mathfrak S$ be a refined $G$-semimatroid. Then $\overline{\mathcal I_{\mathfrak S}}$ is $CM(\delta_{\mathfrak S})$ (i.e., Cohen-Macaulay in characteristic $0$ and every characteristic that does not divide $\delta_{\mathfrak S}$).
\end{thm}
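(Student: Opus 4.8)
The plan is to verify the Cohen--Macaulay criterion of Remark \ref{PCM_crit} for the finite simplicial poset $\IS$ (equivalently, for $\overline{\IS}$). Fix a field $\kr$ whose characteristic is $0$ or does not divide $\delta_{\mathfrak S}$; one must show that every open interval $(x,y)$ of $\widehat{\IS}$ is acyclic over $\kr$ in all degrees up to $\ell(x,y)-1$. Intervals with $y\neq\hat 1$ cost nothing: since $\IS$ is a simplicial poset, $[x,y]$ is a Boolean algebra, so $(x,y)$ is a sphere of dimension $\ell(x,y)-1$, acyclic over any coefficients. The two substantial cases are the intervals $(x,\hat 1)=(\IS)_{>x}$ with $x\neq\hat 0$ and the interval $(\hat 0,\hat 1)=\overline{\IS}$ itself, and I would deduce both from a single auxiliary statement.

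\medskip\noindent\emph{Auxiliary statement.} For every refined $G'$-semimatroid $\mathfrak T\colon G'\circlearrowright\mathscr T$ of rank $d$ and every field $\kr$ with $\operatorname{char}\kr=0$ or $\operatorname{char}\kr\nmid\delta_{\mathfrak T}$, the poset $\overline{\mathcal I_{\mathfrak T}}$ is well-connected over $\kr$. To prove this, set $Q:=\overline{\mathcal I(\underline{\mathscr T})}$, the bar of the independence poset of the underlying semimatroid $\underline{\mathscr T}$ (a semimatroid by Remark \ref{rem:unse}), and apply Lemma \ref{LBWW}, in the form of Remark \ref{wellconn}, to the support map $\Supp\colon\overline{\mathcal I_{\mathfrak T}}\to Q$, which by translativity is rank-preserving with source and target of equal length $d-1$. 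Condition (1) --- that each $Q_{>A}$ be well-connected --- holds because $\mathcal I(\underline{\mathscr T})$ is a geometric semilattice, hence shellable \cite{WW}, hence Cohen--Macaulay over every field, so all its links are well-connected. Condition (2) --- that each fiber $\Supp^{-1}(Q_{\leqslant A})$ be well-connected over $\kr$ --- is where the characteristic restriction enters: one checks from the definitions (using that every orbit in $A$ is setwise fixed by $G'$; compare \cite{DR}) that this fiber equals $\overline{\mathcal I_{\mathfrak T[A]}}$, which by Lemma \ref{lem_scm} is well-connected over any field of characteristic $0$ or not dividing $\delta_{\mathfrak T}(A)$ --- and $\delta_{\mathfrak T}(A)\mid\delta_{\mathfrak T}$ by construction. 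The indices match exactly: if $A$ has rank $r$ then $\ell(Q_{<A})=r-2$, the fiber has dimension $r-1$, and $Q_{>A}$ has dimension $d-r-1$. Hence Lemma \ref{LBWW} transfers well-connectedness over $\kr$ between $\overline{\mathcal I_{\mathfrak T}}$ and $Q$, and $Q$ is well-connected because it is shellable.

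\medskip
Granting this, the proof concludes quickly. Applied with $\mathfrak T=\mathfrak S$, the auxiliary statement makes the interval $(\hat 0,\hat 1)=\overline{\IS}$ acyclic over $\kr$ through codimension $1$. For $(x,\hat 1)=(\IS)_{>x}$ with $x\neq\hat 0$, write $x=G\beta$ with $\beta$ a nonempty independent set; Remark \ref{rem:minorsup} gives $(\IS)_{\geqslant x}\simeq\mathcal I_{\mathfrak S/G\beta}$, hence $(\IS)_{>x}\simeq\overline{\mathcal I_{\mathfrak S/G\beta}}$. The contraction $\mathfrak S/G\beta$ is again refined, with $\delta_{\mathfrak S/G\beta}\mid\delta_{\mathfrak S}$ by Lemma \ref{lem_divide}, so the auxiliary statement applied to $\mathfrak T=\mathfrak S/G\beta$ yields precisely the acyclicity required of $(\IS)_{>x}$, whose dimension equals $\ell(x,\hat 1)$. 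Together with the Boolean-interval case, this verifies the criterion of Remark \ref{PCM_crit}, so $\overline{\IS}$ is Cohen--Macaulay over $\kr$; as $\kr$ was an arbitrary field of characteristic $0$ or coprime to $\delta_{\mathfrak S}$, this is exactly the statement that $\overline{\IS}$ is $CM(\delta_{\mathfrak S})$.

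\medskip
I expect the main difficulty to lie in the auxiliary statement, specifically in identifying the fibers of the support map with the restriction--quotients $\overline{\mathcal I_{\mathfrak T[A]}}$ and in matching the connectivity indices that feed into Lemma \ref{LBWW}. Intertwined with this is the point that Lemma \ref{LBWW} must be invoked in its homological (field-coefficient) form rather than the homotopical one: quotienting by an infinite group can create torsion in the fibers, so homotopical well-connectedness of the fibers genuinely fails, and this is precisely the mechanism by which the restriction on the characteristic --- through $\delta_{\mathfrak S}$, and ultimately through Bredon's lemma inside Lemma \ref{lem_scm} --- becomes unavoidable.
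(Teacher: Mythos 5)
Your proposal is correct and follows essentially the same route as the paper: in both arguments the heart is the support map $\Supp\colon\overline{\mathcal I_{\mathfrak S}}\to\overline{\Supp(\mathcal I_{\mathfrak S})}$ fed into the homological form of Lemma \ref{LBWW}, with the fibers identified as $\overline{\mathcal I_{\mathfrak S[X]}}$ and handled by Lemma \ref{lem_scm}, the target handled by Cohen--Macaulayness of the underlying semimatroid's independence poset, and the characteristic restrictions propagated via Lemma \ref{lem_divide}. The only difference is organizational: the paper runs an induction on rank to make the upper intervals $(\mathcal I_{\mathfrak S})_{>X}\simeq\overline{\mathcal I_{\mathfrak S/X}}$ fully Cohen--Macaulay, whereas you observe that the criterion of Remark \ref{PCM_crit} needs only their well-connectedness and obtain it by applying the same support-map argument directly to each (refined) contraction --- an equivalent repackaging of the same proof.
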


\begin{proof}[Proof of Theorem \ref{thm_CMSM}]
We proceed by induction on the rank of $\mathfrak S$, the case of rank $0$ being trivial. Suppose then that $\mathfrak S$ has positive rank and notice that,  for every $X\in \overline{\mathcal I_{\mathfrak S}}$, the poset ${\mathcal I_{\mathfrak S}}_{> X}\simeq \overline{\mathcal I_{\mathfrak S/X}}$ is $CM(\delta_{\mathfrak S/\Supp(X)})$ by induction hypothesis, and thus also $CM(\delta_{\mathfrak S})$ by Lemma \ref{lem_divide}. Since every lower open interval in $\overline{\mathcal I_{\mathfrak S}}$ is the boundary of a simplex (because $\mathcal I_{\mathfrak S}$ is a simplicial poset), in order to prove the claim it is enough to prove that $\overline{\mathcal I_{\mathfrak S}}$ is well-connected in characteristic $0$ and every characteristic that does not divide $\delta_{\mathfrak S}$.

To this end, consider the restriction of the (order-preserving) support map
$$
\Supp :\overline{\mathcal I_{\mathfrak S}}\to \overline{\Supp({\mathcal I_{\mathfrak S}})}.
$$
We know that, since the action is translative, the poset $\Supp({\mathcal I_{\mathfrak S}})$ is the geometric semilattice of independent sets of a semimatroid - and thus it is homotopy Cohen-Macaulay \cite{BiPr}. In particular, $\Supp({\mathcal I_{\mathfrak S}})_{>X}$ is well-connected for all $X\in \mathcal I_{\mathfrak S}$. 
Now, one also checks that, for every $X\in \overline{\mathcal I_{\mathfrak S}}$, $$\Supp^{-1}(\Supp({({\overline{\mathcal I_{\mathfrak S}}})}_{\leq X}))=\overline{\mathcal I_{\mathfrak S[X]}}.$$ 
By Lemma \ref{lem_scm}, this poset is is well-connected in characteristic $0$ and every characteristic not dividing $\delta_{\mathfrak S}(X)$. 
 Now an application of Lemma \ref{LBWW} proves the claim.
\end{proof}

\subsection{On Stanley-Reisner rings of $G$-semimatroids}

In analogy with (and extending) classical matroid theory, it is now natural to state the following definition.
\begin{defi}
Given a $G$-semimatroid $\mathfrak S$ let 
$$
\mathcal R_{\mathfrak S} := \mathcal R( \mathcal I_{\mathfrak S})
$$
be the Stanley-Reisner ring of $\mathfrak S$.
\end{defi}

\begin{rem}
There is another class of matroidal simplicial complexes whose Stanley-Reisner rings have been in the focus of a substantial amount of work, namely the so-called ``no-broken-circuit complexes'' \cite{Bryl}. No-broken-circuit sets for $G$-semimatroids (and their Stanley-Reisner rings) will be treated in forthcoming work.  
\end{rem}

From our results the following facts follow immediately.

\begin{prop} Let $\mathfrak S$ be a $G$-semimatroid of rank $d$.
\begin{itemize}
\item If $\mathfrak S$ is translative, $\mathcal R_{\mathfrak S}$ is isomorphic to the Stanley ring associated to the (finite) simplicial poset $\mathcal I_{\mathfrak S}$.
    \item If $G$ is the trivial group, $R_{\mathfrak S}$ is isomorphic to the classical Stanley-Reisner ring of the (independence complex of the) underlying (semi)matroid.
\item If $\mathfrak S$ is refined, then the poset $\IS$ is $(d-2)$-connected, and $$\widetilde{H}_{d-1}(\IS,\mathbb Q)\simeq \mathbb Q^{-T_{\mathfrak S}(0,1)}.$$
\item If $\mathfrak S$ is refined, then the associated Stanley-Reisner ring is Cohen-Macaulay, with $h$-polynomial $h_{\IS}(t)=t^d T_{\mathfrak S}(1/t,1)$.
\end{itemize}
\end{prop}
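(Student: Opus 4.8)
All four assertions are consequences of results established earlier, and I would deduce each one by a short reduction. When $\mathfrak S$ is translative, $\mathcal I_{\mathfrak S}=\mathcal I(\SSS)/G$ is a \emph{finite} simplicial poset by Remark~\ref{rem:qps}.(i) and (iii), so Theorem~\ref{thm:finitePL} applies verbatim and identifies $\mathcal R_{\mathfrak S}=\mathcal R(\mathcal I_{\mathfrak S})$ with Stanley's ring $A_{\mathcal I_{\mathfrak S}}$ of the simplicial poset $\mathcal I_{\mathfrak S}$. If moreover $G$ is trivial, the finiteness clause in Definition~\ref{def_gaos} forces $\mathcal K$, hence $\mathcal I(\SSS)$, to be finite, and $\mathcal I_{\mathfrak S}=\mathcal I(\SSS)$ is exactly the face poset $P_\Sigma$ of the independence complex $\Sigma$ of $\SSS$; since $\mathcal R(P_\Sigma)=A_{P_\Sigma}=\mathcal R(\Sigma)$ by Theorem~\ref{thm:finitePL} and Stanley's Proposition recalled in Section~\ref{sec:StanleyFin}, one gets $\mathcal R_{\mathfrak S}\cong\mathcal R(\Sigma)$, the classical Stanley--Reisner ring.

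\emph{Cohen--Macaulayness of the ring and the $h$-polynomial.} Now assume $\mathfrak S$ refined of rank $d$; by Remark~\ref{rem:qps}.(ii), $\mathcal I_{\mathfrak S}$ is a finite simplicial poset of length $d$, and Theorem~\ref{thm_CMSM} states that $\overline{\mathcal I_{\mathfrak S}}$ is $CM(\delta_{\mathfrak S})$. The order complex $\Delta(\mathcal I_{\mathfrak S})$ is the cone over $\Delta(\overline{\mathcal I_{\mathfrak S}})$ with apex $\hat 0$, and a cone is Cohen--Macaulay over any given ring precisely when its base is, since every link in the cone is either a link of the base or a cone over one (hence acyclic); thus $\mathcal I_{\mathfrak S}$ too is $CM(\delta_{\mathfrak S})$. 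Feeding this into Stanley's Proposition of Section~\ref{sec:StanleyFin} (cf.\ \cite[Section~3]{StaSP}) shows that $\mathcal R_{\mathfrak S}=A_{\mathcal I_{\mathfrak S}}$ is a Cohen--Macaulay ring over every field whose characteristic is $0$ or does not divide $\delta_{\mathfrak S}$, and identifies its algebraic $h$-polynomial with the combinatorial polynomial $h_{\mathcal I_{\mathfrak S}}(t)$; finally $h_{\mathcal I_{\mathfrak S}}(t)=t^dT_{\mathfrak S}(1/t,1)$ by Lemma~\ref{lem:pols}.(i).

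\emph{The connectivity statement.} Over $\mathbb Q$, $CM(\delta_{\mathfrak S})$ for $\overline{\mathcal I_{\mathfrak S}}$ says in particular (looking at the link of the empty face) that $\widetilde H_i(\overline{\mathcal I_{\mathfrak S}};\mathbb Q)=0$ for $i<d-1$, where $d-1$ is the length of $\overline{\mathcal I_{\mathfrak S}}$; this is the asserted connectivity read over $\mathbb Q$. Since all lower reduced homology vanishes, $\dim_{\mathbb Q}\widetilde H_{d-1}(\overline{\mathcal I_{\mathfrak S}};\mathbb Q)=(-1)^{d-1}\epsilon(\overline{\mathcal I_{\mathfrak S}})$; Lemma~\ref{lem:EuCar} gives $\epsilon(\overline{\mathcal I_{\mathfrak S}})=-\chi_{\mathcal I_{\mathfrak S}}(1)$, and evaluating Lemma~\ref{lem:pols}.(i) at $t=1$ gives $\chi_{\mathcal I_{\mathfrak S}}(1)=(-1)^dT_{\mathfrak S}(0,1)$, so a short sign computation produces the stated value of $\dim_{\mathbb Q}\widetilde H_{d-1}(\overline{\mathcal I_{\mathfrak S}};\mathbb Q)$.

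\emph{Where the work is.} There is no genuine obstacle in this proposition: everything difficult has been absorbed into Theorem~\ref{thm_CMSM} (which rests on Bredon's lemma and the decoupled/refined analysis of the earlier sections). The two points that still require a little attention are the elementary passage between $\mathcal I_{\mathfrak S}$ and $\overline{\mathcal I_{\mathfrak S}}$ — handled by the cone observation above, or equivalently by Remark~\ref{PCM_crit} and the fact that the additional open intervals $(\hat 0,y)$ of $\widehat{\mathcal I_{\mathfrak S}}$ have spherical order complexes by simpliciality — and the sign bookkeeping in the Euler-characteristic computation; I would write both out in full but expect no surprises.
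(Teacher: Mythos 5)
Your reduction is the same one the paper intends --- the proposition is stated there without proof, as an immediate consequence of Theorem \ref{thm:finitePL}, Theorem \ref{thm_CMSM}, Lemma \ref{lem:EuCar} and Lemma \ref{lem:pols} --- and all four bullets are handled by the right citations. Your reading of the connectivity claim as $\mathbb Q$-acyclicity is also the only one the quoted results support, since Theorem \ref{thm_CMSM} is a homological ($CM(\delta_{\mathfrak S})$) statement and cannot deliver vanishing of $\pi_1$.

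The one step you defer, the ``short sign computation,'' does not in fact produce the stated value. From Lemma \ref{lem:pols}.(i) one gets $\chi_{\IS}(1)=(-1)^dT_{\mathfrak S}(0,1)$, so Lemma \ref{lem:EuCar} gives $\epsilon(\check{\IS})=-\chi_{\IS}(1)=(-1)^{d+1}T_{\mathfrak S}(0,1)$; since only the top homology survives, $\dim_{\mathbb Q}\widetilde H_{d-1}(\check{\IS},\mathbb Q)=(-1)^{d-1}\epsilon(\check{\IS})=+T_{\mathfrak S}(0,1)$, not $-T_{\mathfrak S}(0,1)$. (Sanity check: for a trivial action on a matroid this is the classical count $T_M(0,1)\geq 0$ of spheres in the shelled independence complex; e.g.\ two parallel points give $T(0,1)=1$ and $\widetilde H_0\simeq\mathbb Q$, whereas the stated formula would demand $\mathbb Q^{-1}$.) So the exponent in the third bullet carries a sign error, and your assertion that the computation confirms the stated value is the one place the argument as written would fail; everything else is correct.
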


\subsection{On refined quotients of geometric semilattices}

As a byproduct of our previous considerations we can prove the following result on the topology of quotients of geometric semilattices.

\begin{thm}\label{thm:PGCM}
If $\mathfrak S$ is a refined $G$-semimatroid, then the poset  $\check{\mathcal P_{\mathfrak S}}$ is $CM(\delta_{\mathfrak S})$.
\end{thm}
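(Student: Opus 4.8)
The plan is to transport the Cohen--Macaulayness we already established for $\overline{\mathcal I_{\mathfrak S}}$ (Theorem \ref{thm_CMSM}) over to $\check{\mathcal P_{\mathfrak S}}$ by a poset-map argument, in the same spirit as the proof of Theorem \ref{thm_CMSM} itself. First I would set $d:=\rho(\mathfrak S)$ and recall (Remark \ref{rem:qps}.(ii)) that both $\mathcal I_{\mathfrak S}$ and $\mathcal P_{\mathfrak S}$ are finite, bounded below, and ranked of length $d$, with poset rank equal to $\rho$. The natural candidate map is the ``closure'' map: every independent orbit $G\beta$ is sent to $G\operatorname{cl}(\beta)$, and one checks (using that closure commutes with the $G$-action, since $G$ acts by semimatroid automorphisms) that this descends to a well-defined, order-preserving, rank-preserving surjection $c\colon \overline{\mathcal I_{\mathfrak S}}\to \check{\mathcal P_{\mathfrak S}}$. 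I would then proceed by induction on $d$, the case $d\le 1$ being immediate.

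Next I would verify the two hypotheses of Lemma \ref{LBWW} (in the rank-preserving reformulation of Remark \ref{wellconn}) for the map $c$. For condition (1): given $G F\in \check{\mathcal P_{\mathfrak S}}$, by Remark \ref{rem:minorsup} the upper interval $(\check{\mathcal P_{\mathfrak S}})_{>GF}\cup\{\hat 0\}$ is (up to adjoining a bottom) isomorphic to $\check{\mathcal P_{\mathfrak S/G\alpha}}$ for a representative $\alpha$ with $\operatorname{cl}(\alpha)=F$; since $\mathfrak S/G\alpha$ is again a refined $G$-semimatroid (the remark on minors of refined $G$-semimatroids) of rank $d-\rho(F)<d$, the induction hypothesis gives that $\check{\mathcal P_{\mathfrak S/G\alpha}}$ is $CM(\delta_{\mathfrak S/G\alpha})$, hence $CM(\delta_{\mathfrak S})$ by Lemma \ref{lem_divide}, hence in particular well-connected. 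For condition (2): the fiber $c^{-1}\big((\check{\mathcal P_{\mathfrak S}})_{\le GF}\big)$ consists of all orbits of independent sets contained in some translate of $F$, which is exactly $\overline{\mathcal I_{\mathfrak S[X]}}$ for $X:=\Supp(GF)$; by Lemma \ref{lem_scm} this is well-connected in characteristic $0$ and every characteristic not dividing $\delta_{\mathfrak S}(X)$, hence certainly $\delta_{\mathfrak S}$-well-connected. Applying Lemma \ref{LBWW} (and its acyclic version over any field whose characteristic is $0$ or prime to $\delta_{\mathfrak S}$, valid since everything in sight is finite) then transfers well-connectedness from $\overline{\mathcal I_{\mathfrak S}}$ (known by Theorem \ref{thm_CMSM}) to $\check{\mathcal P_{\mathfrak S}}$.

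Finally, to upgrade ``well-connected'' to the full Cohen--Macaulay property for $\check{\mathcal P_{\mathfrak S}}$ I would invoke the interval criterion of Remark \ref{PCM_crit}: every open interval $(x,y)$ in $\widehat{\check{\mathcal P_{\mathfrak S}}}$ must be shown to be $(\ell(x,y)-1)$-acyclic (resp.\ connected) in the relevant characteristics. Lower intervals $(\hat 0, GF)$ are handled because $(\check{\mathcal P_{\mathfrak S}})_{\le GF}$, being $\overline{\mathcal I_{\mathfrak S[X]}}$-covered via the same fiber computation together with Lemma \ref{lem:up}.(iii), carries a refined quotient structure of strictly smaller rank; intervals with $y=\hat 1$ are exactly the $\check{}$ of $\mathcal P_{\mathfrak S}$ over an element, covered by the induction via Remark \ref{rem:minorsup}; and general intervals $(GF, GF')$ are isomorphic to lower intervals inside a contraction $\mathcal P_{\mathfrak S/G\alpha}$, again of smaller rank. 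The main obstacle I anticipate is the bookkeeping in condition (2): making the identification $c^{-1}\big((\check{\mathcal P_{\mathfrak S}})_{\le GF}\big)\cong \overline{\mathcal I_{\mathfrak S[X]}}$ fully rigorous requires care about how translates interact with closure (one needs that an independent set lies below \emph{some} translate $gF$ iff its support lies below $\Supp(GF)$, which uses translativity and Remark \ref{rem_traeq}), and checking that the rank function is genuinely preserved so that Remark \ref{wellconn} applies cleanly rather than the unwieldy general form of Lemma \ref{LBWW}.
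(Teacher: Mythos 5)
Your overall architecture coincides with the paper's (induction on rank, the closure map $f\colon \check{\IS}\to\check{\PS}$, Lemma \ref{LBWW}, condition (1) via contractions plus Lemma \ref{lem_divide}), but your verification of condition (2) has a genuine gap: the fiber $f^{-1}\bigl((\check{\PS})_{\leq GF}\bigr)$ is \emph{not} $\check{\mathcal I_{\mathfrak S[X]}}$ for $X=\Supp(GF)$. Since $G\operatorname{cl}(I)\leq GF$ holds iff $gI\subseteq F$ for some $g\in G$, the fiber consists of orbits of independent sets lying inside a \emph{single} translate of $F$, whereas $\mathcal I_{\mathfrak S[X]}$ is built from $\SSS[\cup X]$ with $\cup X=\bigcup_{s\in F}Gs$, so it also contains orbits of ``mixed'' independent sets drawn from several different translates of $F$, and these are in general not below $GF$ in $\PS$. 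Concretely, take the $\mathbb Z^2$-periodic line arrangement $\{x=m\}_{m\in\mathbb Z}\cup\{x+2y=k\}_{k\in\mathbb Z}$ in $\mathbb R^2$ and the flat $F=\{x=0,\ x+2y=0\}$: the independent set $I=\{x=0,\ x+2y=1\}$ has $\supp(I)\subseteq X$, so $GI\in\check{\mathcal I_{\mathfrak S[X]}}$, but $\operatorname{cl}(I)$ is the flat of lines through $(0,\tfrac12)$, which is not a translate of $F$, so $GI$ does not lie in the fiber. The fiber computation you quote is the one from the proof of Theorem \ref{thm_CMSM}, where the map is the \emph{support} map to $\Supp(\IS)$; for the closure map to $\PS$ the target elements remember the translate, the fibers are strictly smaller, and Lemma \ref{lem_scm} cannot be invoked for them.

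The correct identification --- the ``Claim'' inside the paper's Proposition \ref{prop:PLCM}, which is the key tool behind Theorem \ref{thm:PGCM} --- is that the quotient map restricts to a poset isomorphism $\check{\mathcal I}[F]\to f^{-1}\bigl((\check{\PS})_{\leq GF}\bigr)$, where $\mathcal I[F]$ is the independence poset of the finite matroid $\SSS[F]$ of rank $\rho(F)$ obtained by restricting $\SSS$ to one representative flat $F$; injectivity of this map uses translativity. Matroid independence complexes are homotopy Cohen--Macaulay \cite{BjoAltro}, so these fibers are well-connected with no restriction on the characteristic, and the only characteristic restrictions enter through $\check{\IS}$ (Theorem \ref{thm_CMSM}) and the inductive treatment of the upper intervals, yielding $CM(\delta_{\mathfrak S})$. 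Relatedly, in your final interval check the bounded intervals $(x,y)$ with $y\neq\hat 1$ should be handled directly by Lemma \ref{lem:low}: they are isomorphic to intervals of the geometric semilattice $\mathcal L(\SSS)$, hence open intervals of geometric lattices and thus well-connected; your appeal to a ``refined quotient structure'' via $\mathcal I_{\mathfrak S[X]}$ leans again on the faulty fiber identification.
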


We postpone the proof of this theorem until after some preparatory work.

\begin{rem}
The restriction on the characteristic in Theorem \ref{thm:PGCM} is only due to the corresponding limitation in Theorem \ref{thm_CMSM}. In fact, Proposition \ref{prop:PLCM} shows that in any contraction-closed class of group actions on semimatroids, $\overline{\mathcal P_{\mathfrak S}}$ is Cohen-Macaulay "of the same class" as $\overline{\mathcal I_{\mathfrak S}}$. 
\end{rem}

\begin{cor}\label{cor:HT}
More precisely, if $\mathfrak S : G\circlearrowright \SSS$ is a refined action on a semimatroid of rank $d$,
$$\widetilde{H}_i(\check{\PS},\mathbb Q)=
\left\{\begin{array}{ll}
\{0\} & \textrm{if }i\leq d-2 \\
\mathbb Q^{T_{\mathfrak S}(0,0)}
& \textrm{if }i=d-1 
\end{array}\right.$$
If $\mathcal P/G$ is bounded above, then clearly $\mathcal P/G\setminus \{\hat 0\}$  is contractible. In this case,
$$\widetilde{H}_i({\sz{\PS}},\mathbb Q)=
\left\{\begin{array}{ll}
\{0\} & \textrm{if }i\leq d-2 \\
\mathbb Q^{-T_{\mathfrak S}(1,0)}
& \textrm{if }i=d-1 
\end{array}\right.$$

\end{cor}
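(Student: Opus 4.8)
The plan is to read off both formulas from Theorem \ref{thm:PGCM}, by converting the Cohen--Macaulay property into a statement about reduced rational homology and then evaluating reduced Euler characteristics through Lemma \ref{lem:EuCar} and Lemma \ref{lem:pols}.(ii). First I would recall that, by Remark \ref{rem:qps}.(ii), $\PS=\mathcal L(\SSS)/G$ is bounded below and ranked of length $d$, so $\check{\PS}$ has length $d-1$ and its order complex $\Delta(\check{\PS})$ has dimension $d-1$. Theorem \ref{thm:PGCM} asserts that $\check{\PS}$ is $CM(\delta_{\mathfrak S})$, in particular Cohen--Macaulay over $\mathbb Q$, so $\widetilde H_i(\check{\PS},\mathbb Q)=0$ for $i\le d-2$; since $\Delta(\check{\PS})$ is $(d-1)$-dimensional, the only possibly nonzero reduced rational homology group is $\widetilde H_{d-1}(\check{\PS},\mathbb Q)$, whose $\mathbb Q$-dimension is therefore $(-1)^{d-1}\epsilon(\check{\PS})$.

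Next I would compute $\epsilon(\check{\PS})=-\chi_{\PS}(1)$ via Lemma \ref{lem:EuCar}, and combine this with Lemma \ref{lem:pols}.(ii), which gives $\chi_{\PS}(t)=(-1)^d T_{\mathfrak S}(1-t,0)$ and hence $\chi_{\PS}(1)=(-1)^d T_{\mathfrak S}(0,0)$. This yields $\dim_{\mathbb Q}\widetilde H_{d-1}(\check{\PS},\mathbb Q)=(-1)^{d-1}\bigl(-(-1)^d T_{\mathfrak S}(0,0)\bigr)=T_{\mathfrak S}(0,0)$, which is the first assertion. For the case in which $\PS$ is bounded above I would first observe that $\check{\PS}$ then has a maximum $\hat 1$, so $\Delta(\check{\PS})$ is the cone with apex $\hat 1$ over $\Delta(\sz{\PS})$ and is thus contractible (consistently with the vanishing of $T_{\mathfrak S}(0,0)$, a contractible complex having trivial reduced Euler characteristic). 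To handle $\sz{\PS}$ I would use that it has length $d-2$, that $\Delta(\sz{\PS})=\operatorname{lk}_{\Delta(\check{\PS})}(\hat 1)$, and that this link is Cohen--Macaulay over $\mathbb Q$ because $\check{\PS}$ is; hence $\widetilde H_i(\sz{\PS},\mathbb Q)=0$ for $i\le d-3$ and $\dim_{\mathbb Q}\widetilde H_{d-2}(\sz{\PS},\mathbb Q)=(-1)^{d-2}\epsilon(\sz{\PS})$. Since $\PS$ is now also bounded above, the second half of Lemma \ref{lem:EuCar} gives $\epsilon(\sz{\PS})=\chi_{\PS}(0)=(-1)^d T_{\mathfrak S}(1,0)$, whence $\dim_{\mathbb Q}\widetilde H_{d-2}(\sz{\PS},\mathbb Q)=T_{\mathfrak S}(1,0)$.

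I do not anticipate a genuine obstacle here: the core of the argument is the standard fact that a poset which is Cohen--Macaulay over $\mathbb Q$ and of pure length $e$ has reduced rational homology concentrated in degree $e$, of $\mathbb Q$-dimension equal to $(-1)^e$ times its reduced Euler characteristic, and the relevant Euler characteristics have already been expressed as Tutte evaluations. The one point that needs care is the bookkeeping of lengths and homological degrees, namely that the proper part $\check{\PS}$ has length $d-1$ (so its homology sits in degree $d-1$) while the doubly truncated part $\sz{\PS}$ has length $d-2$ (so its homology sits in degree $d-2$). No ingredient beyond Theorem \ref{thm:PGCM}, Lemma \ref{lem:EuCar} and Lemma \ref{lem:pols} is needed.
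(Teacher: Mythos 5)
Your argument follows the paper's own (very terse) proof exactly: there, too, the vanishing range is read off from the Cohen--Macaulayness asserted in Theorem \ref{thm:PGCM}, and the top Betti number is obtained by combining Lemma \ref{lem:EuCar} with Lemma \ref{lem:pols}.(ii); your treatment of the first display is complete and correct, and your sign bookkeeping $\dim_{\mathbb Q}\widetilde H_{d-1}(\check{\PS},\mathbb Q)=(-1)^{d-1}\epsilon(\check{\PS})=T_{\mathfrak S}(0,0)$ is right.

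For the second display, however, be aware that what your (correct) computation yields is \emph{not} the formula as printed. Since $\PS$ is ranked of length $d$, the poset $\sz{\PS}$ has length $d-2$, so $\Delta(\sz{\PS})$ is $(d-2)$-dimensional and its reduced rational homology is concentrated in degree $d-2$, of dimension $(-1)^{d-2}\epsilon(\sz{\PS})=(-1)^{d-2}(-1)^dT_{\mathfrak S}(1,0)=T_{\mathfrak S}(1,0)$ --- exactly what you derive. The corollary as stated instead puts the nonzero group in degree $d-1$ with exponent $-T_{\mathfrak S}(1,0)$, which cannot be right: already for the central arrangement of two lines through the origin in $\mathbb C^2$ (trivial group, $d=2$, $T_{\mathfrak S}(x,y)=x^2$) one has $\sz{\PS}$ equal to a two-point antichain, so $\widetilde H_0=\mathbb Q=\mathbb Q^{T_{\mathfrak S}(1,0)}$ while the printed formula would give $\widetilde H_0=0$ and $\widetilde H_1=\mathbb Q^{-1}$. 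So the printed second display contains an off-by-one (and consequent sign) slip of precisely the kind your ``bookkeeping of lengths and homological degrees'' is meant to catch: the correct statement is vanishing for $i\leq d-3$ and $\widetilde H_{d-2}(\sz{\PS},\mathbb Q)\simeq\mathbb Q^{T_{\mathfrak S}(1,0)}$. Your proof establishes this corrected statement, but as written it ends by silently presenting that conclusion as if it were the displayed one; you should flag the discrepancy explicitly rather than leave the mismatch unremarked.
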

\begin{proof}
The Corollary's claim for $i\leq d-2$ is a reformulation of the connectivity claim in the Theorem. The claims for $i=d-1$ follow using Lemma \ref{lem:EuCar} and Lemma \ref{lem:pols}.
\end{proof}

The following proposition is the key tool in the proof of Theorem \ref{thm:PGCM}.

\begin{prop}\label{prop:PLCM} If $\check{\IS}$ as well as every $\check{\mathcal P_{\mathfrak S / p}}$ for all $p\in \check{\PS}$  are well-connected, then $\check{\PS}$ is homotopy Cohen-Macaulay. The homological version of the claim also holds (by fixing a characteristic, say $k$, and replacing ``well-connected'' with ``acyclic in characteristic $k$ through codimension $1$'' and ``homotopy Cohen-Macaulay'' with ``Cohen-Macaulay in characteristic $k$'').
\end{prop}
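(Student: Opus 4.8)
The plan is to verify the criterion of Remark \ref{PCM_crit}: one must show that every open interval $(x,y)$ of $\widehat{\check{\PS}}$ is $(\ell(x,y)-1)$-connected (resp.\ acyclic in characteristic $k$). I would split this according to where the interval sits. If $y\neq\hat 1$, then $(x,y)$ is an open interval inside $\PS$ itself; since $\PS\simeq\mathcal L(\SSS)/G$ and $\mathcal L(\SSS)$ is a geometric semilattice, such an interval is itself (the proper part of) a geometric semilattice — indeed, using Remark \ref{rem:minorsup} and Lemma \ref{lem:up}, $\PS_{\geq x}\simeq\mathcal P_{\mathfrak S/\alpha}$ for a suitable representative $\alpha$, and this is again a quotient of a geometric semilattice by a refined action — so the interval is well-connected by the very hypothesis of the Proposition applied to the minor $\mathfrak S/\alpha$ (plus the analogous statements for its own minors, which are minors of $\mathfrak S$). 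The first step, then, is to reduce all intervals $(x,y)$ with $y\neq\hat 1$ to the assumed well-connectedness of the $\check{\mathcal P_{\mathfrak S/p}}$ and of their iterated contractions; this is essentially bookkeeping with Remark \ref{rem:minorsup}, noting that a contraction of a refined $G$-semimatroid is again refined, so the hypothesis propagates down to all minors.

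The second and genuinely substantive step is the top interval $(x,\hat 1)$, i.e.\ the assertion that $(\check{\PS})_{>x}$ — equivalently $\overline{\mathcal P_{\mathfrak S/\alpha}}$ — is well-connected, and in particular the case $x=\hat 0$, which says $\check{\PS}$ itself is well-connected. Here the idea is to compare $\check{\PS}$ with $\check{\IS}$ via a poset map. The natural candidate is the closure map sending an orbit of an independent set to the orbit of its closure, or rather its composition structure: there is an order-preserving surjection relating $\mathcal I(\SSS)$ and $\mathcal L(\SSS)$ (send an independent set to the flat it spans), which descends to the quotients. I would invoke Lemma \ref{LBWW} (in the rank-preserving reformulation of Remark \ref{wellconn}) for this map: condition (1) asks that the upper fibers $(\check{\PS})_{>q}$ be well-connected, which is exactly the first step above applied to a smaller-rank minor; condition (2) asks that the fibers over lower intervals be well-connected, and these fibers are — after identifying via Lemma \ref{lem:low} and Lemma \ref{lem:up} — independence complexes of restrictions, hence cones or more precisely matroidal complexes that are well-connected. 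Then well-connectedness of $\check{\IS}$ (hypothesis) transfers to $\check{\PS}$.

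The main obstacle I expect is controlling the fibers of the comparison map between $\IS$ and $\PS$ precisely enough to feed Lemma \ref{LBWW}: one must identify $f^{-1}((\check\PS)_{\leq \bar x})$ for a flat-orbit $\bar x$ with (the proper part of) the independence poset of a restriction $\mathfrak S[\,\cdot\,]$ of the right rank, and check it is well-connected — for this the matroid-theoretic fact that independent sets contained in a fixed flat form a well-connected (indeed shellable, or cone-like) complex is what one needs, carried through the quotient using translativity. The homological version is identical throughout, replacing every appeal to Lemma \ref{LBWW} and Remark \ref{PCM_crit} by their acyclicity counterparts, which the excerpt has already set up; no new idea is required there.
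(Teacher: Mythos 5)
Your plan follows the paper's proof almost exactly: verify the criterion of Remark \ref{PCM_crit}, handle the top intervals $(x,\hat 1)$ via the hypothesis on the contractions $\check{\mathcal P_{\mathfrak S/p}}$, and prove well-connectedness of $\check{\PS}$ itself by applying Lemma \ref{LBWW} to the closure-induced map $\check{\IS}\to\check{\PS}$, with condition (2) verified by identifying the fibers $f^{-1}((\check{\PS})_{\leq p})$ with independence posets of restrictions (the paper's ``Claim'', proved via translativity exactly as you anticipate). The one correction: for a bounded open interval $(x,y)$ with $y\neq\hat 1$, do not appeal to the Proposition's hypothesis for the minor $\mathfrak S/\alpha$ --- that hypothesis only gives well-connectedness of the full proper part $\check{\mathcal P_{\mathfrak S/\alpha}}$, not of the bounded interval $(\hat 0, y')$ inside it; instead, as your first clause suggests, use Lemma \ref{lem:low} to identify $[x,y]$ with a bounded interval of the geometric semilattice $\mathcal L(\SSS)$, which is a geometric lattice by \cite[Theorem 2.1]{WW} and hence has well-connected proper part.
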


\begin{proof} 
By Remark \ref{PCM_crit} we have to prove that every open interval $(x,y)\subseteq \widehat{\PS}$  is $(\ell(x,y) -1)$-connected (resp., if a characteristic $k$ is fixed, $(\ell(x,y) -1)$-acyclic in characteristic $k$). If $y\neq \hat 1$ this is true because Lemma \ref{lem:low} implies that bounded intervals in $\PS$ are isomorphic to bounded intervals in $\mathcal L (\SSS )$, but bounded intervals in $\mathcal L (\SSS )$ are geometric lattices 
\cite[Theorem 2.1]{WW}, hence their reduced order complexes are (homotopically) well-connected (and in particular acyclic through codimension $1$ in every characteristic). We are left with proving that, for every $Gx\in \PS$, the poset $(\PS)_{>Gx}$ is $(\ell((\PS)_{>Gx}) -1)=(\rk((\PS)_{\geq Gx})-2)$-connected (resp.\ $(\rk((\PS)_{\geq Gx})-2)$-acyclic  in characteristic $k$). If $Gx\in \check{\PS}$ this is true by assumption since, with Lemma \ref{lem:up}, we have that $(\PS)_{\geq Gx}=\mathcal P_{\mathfrak S / Gx}$.

We are left with proving that $\check{\PS}$ is $(\rho(\mathfrak S)-2)$-connected (resp.\ -acyclic). This will follow from the assumption on $\IS$ via Lemma \ref{LBWW}.
The posets $\check{\IS}$ and $\check{\PS}$ are both ranked of the same length $(\rho(\mathfrak S)-1)$.  The equivariant and rank-preserving poset map 
$\operatorname{cl}: \mathcal I (\SSS) \to \mathcal L (\SSS)$ given by semimatroid closure (see \cite[Definition 3.27]{DR}) 
induces a rank preserving poset map 
$$f: \check{\IS}\to \check{\PS}, \quad\quad GI \mapsto G\operatorname{cl}(I)$$

The claim follows by Lemma \ref{LBWW} applied to $f$ with $t=(\rho(\mathfrak S)-2)$. Thus we only have to check that Lemma's assumptions. Let henceforth $\rk$ denote the rank function of the poset $\PS$ and fix $p\in \check{\PS}$. Then, $\operatorname{rk}(p)>0$ and $\ell((\check{\PS})_{<p})= \rk(p)-2$ (where we take the length of the empty poset to be $-1$).
\begin{itemize}
\item[(1)] By Lemma \ref{lem:sg}.(i) and Lemma \ref{lem:up}.(ii), the poset $(\check{\PS})_{>p}$ is isomorphic to $\check{\mathcal P_{\mathfrak S /p}}$ and thus by assumption it is $(d-\operatorname{rk}(p)-2)=
(t - \ell((\check{\mathcal P_{\mathfrak G}})_{< p}) - 2)$-connected (resp.\ acyclic over $\mathbb K$).
\item[(2)] We are left with showing that $f^{-1}((\check{\PS})_{\leq p})$ is  $(\rk(p)-2)$-connected. This will follow from the fact that it is isomorphic to the poset of (proper) faces of the independence complex of a rank $\rk(p)$ matroid, which is classically known to be $(\rk(p)-2)$-connected \cite{BjoAltro}, and thus in particular also $(\rk(p)-2)$-acyclic in every characteristic. This isomorphism is proved in the next claim which, then, concludes the proof of the theorem.

More precisely, we fix a representative $F\in p$  
 and consider the (rank $\rk(p)$) matroid $\SSS[F]$, the restriction of $\SSS$ to $F$  
 We write $\mathcal I [F]$ for the poset of independent sets of this matroid and note that the poset of flats of $\SSS[F]$ is naturally isomorphic to $\mathcal L (\SSS)_{\leq F}$. 

\begin{itemize}
\item[] {\bf Claim.} 
We claim that the quotient map by the $G$-action induces a poset isomorphism
$$
\gamma: \check{\mathcal I}[F] \to f^{-1}((\check{\PS})_{\leq p}). 
$$
\item[] {\em Proof of claim.} Since both posets are finite, it will suffice to prove that $\gamma$ is a bijective order-preserving map. Write $\mathcal L$ instead of $\mathcal L(\SSS)$ for brevity, and consider the diagram

\begin{center}
\begin{tikzpicture}[x=5em,y=2em]
\node at (-1,1) (A) {$f^{-1}((\check{\PS})_{\leq p})$};
\node at (1,1) (B) {$(\check{\PS})_{\leq p}$};
\node at (1,-1) (C) {$\check{\mathcal L}_{\leq F}$};
\node at (-1,-1) (D) {$\check{\mathcal I}[F]$};
\draw[->] (A) -- (B);
\node at (0,1.3) (cld) {$f$};
\draw[->] (D) -- (A);
\node at (-.8,0) (gm) {$\gamma$};
\draw[->] (D) -- (C);
\node at (0,-.7) (cl) {$\operatorname{cl}$};
\draw[->] (C) -- (B);
\node at (0.8,0) (qt) {$\alpha$};
\end{tikzpicture}
\end{center}
where $\operatorname{cl}$ is the closure map of the matroid $\SSS[F]$ and $\alpha: X\mapsto GX$ denotes the restriction of the quotient map of the action on $\mathcal L$. 
The maps $f$, $\operatorname{cl}$ and $\alpha$ are rank-preserving by definition, and $\alpha$ is a poset isomorphism because the group action is translative.

For every $I\in \check{\mathcal I}[F]$, since $I\subseteq F$ and $\operatorname{cl}(F)=F$ we have $\operatorname{cl}(I) \in \check{\mathcal L}_{\leq F}$. 
Unwrapping the definitions we see that  $f\gamma(I) = f\operatorname{cl}(I) = \alpha \operatorname{cl} (I) \in (\check{\PS})_{\leq p}$, thus the map $\gamma$ is well-defined and the diagram commutes. That $\gamma$ is order-preserving follows because it is the restriction of the (order-preserving) quotient map on $\mathcal I (\SSS)$.

Moreover, given any $q\in f^{-1}((\check{\PS})_{\leq p})$ consider the element $X:=\alpha^{-1}(f(q))$ and let $I\subseteq X$ be such that $q=GI$. Then $I$ is independent and $I\subseteq X \subseteq F$, hence $I\in\check{\mathcal I}[F]$ and clearly $\gamma(I)=GI=q$, hence $\gamma$ is surjective.

Finally, any $I'\in \check{\mathcal I}[F]$ with $\gamma I'=q=GI$ satisfies $gI'=I$ for some $g\in I$ hence, by translativity of the action on ${\mathcal I (\SSS)}$, we must have $I=I'$ and so $\gamma$ is injective.

As a bijective order-preserving map between finite posets, $\gamma$ is a poset-isomorphism as claimed.
\end{itemize}
\end{itemize}
\end{proof}

\begin{proof}[Proof of Theorem \ref{thm:PGCM}]  
We argue by induction on $d:=\rho(\mathfrak S)$. The claim trivially holds if $\rho(\mathfrak S)=0$. Now suppose that $\mathfrak S$ has rank $d>1$ and that the claim holds in every lower rank. By Theorem \ref{thm_CMSM}, $\check{\IS}$ is $CM(\delta_{\mathfrak S})$. Moreover, for every $p\in \check{\mathcal P_{\mathfrak S}}$ by induction hypothesis the poset $\check{\mathcal P_{\mathfrak S/p}}$ is $CM(\delta_{\mathfrak S /p})$, hence in particular $CM(\delta_{\mathfrak S})$ (Lemma \ref{lem_divide}). Thus we conclude by applying Proposition \ref{prop:PLCM}. 

\end{proof}

\section{
Applications to arrangements}\label{sec:AA}

In this section we show that our definitions, and the level of generality of our theorems, do encompass one of the main motivating examples, namely that of certain algebraically defined arrangements of submanifolds which generalize the classical setting of arrangements of hyperplanes in vector spaces. Just as every hyperplane arrangement has an associated (semi)matroid, these larger classes of geometric objects have a natural associated $G$-semimatroid (the case of hyperplanes being recovered by trivial group actions). Here we review the definition of Abelian (incl.\ toric and elliptic) arrangements and of $(p,q)$-arrangements, and we prove that the associated Stanley-Reisner rings satisfy our theorems. Even though $(p,q)$-arrangements can be seen as generalizations of Abelian arrangements (see Remark \ref {rem:LTA}), we treat Abelian arrangements separately because they (and especially their subclass of toric arrangements) are in the focus of a substantial dedicated literature, see \S \ref{ss:AA1}.

\subsection{Abelian arrangements}\label{ss:AA}
As was briefly discussed in the introduction, one of our main motivations comes from the theory of arrangements, and in particular from the desire to uniformly treat {Abelian arrangements} in a way that generalizes the classical theory of hyperplane arrangements (see, e.g., \cite{DeluMFO}). 
 To make the definition in Section \ref{ss:AA1} slightly more explicit, let $\mathbb G$ stand for one of $\mathbb C, \mathbb C^*$ or $\mathbb E$, an elliptic curve, seen as complex algebraic groups, and let $\Lambda$ be the lattice of group homomorphisms $\mathbb G^d \to \mathbb G$.
Any choice of $a_1,\ldots, a_n\in \Lambda$ and $b_1,\ldots, b_n\in \mathbb G$ determines an arrangement
$$
\mathscr A :=\{H_i:= a_i^{-1} (b_i) \mid i=1,\ldots, n\}
$$
of hypersurfaces in $\mathbb G^d$. We call this an {\em Abelian arrangement}. It is called a {\em linear}, {\em toric}, {\em elliptic} arrangement if $\mathbb G$ is $\mathbb C$, respectively $\mathbb C^*$ or $\mathbb E$. The arrangement is called {\em essential} if the $a_i$'s span a full-rank sublattice of $\Lambda$.

A {\em central} arrangement is one where $b_i=\operatorname{id}_{\mathbb G}$ for all $i=1,\ldots,n$. A deep enumerative-combinatorial study of central arrangements, with special attention to the linear and toric case, has led to the introduction of arithmetic Tutte polynomials \cite{Moc1} and arithmetic matroids \cite{BM,dAM}. Questions about commutative-algebraic interpretations of some of the polynomials arising in this context  led to   attempts at modeling the poset $\mathcal I(\mathscr A)$ of {\em ``independent sets''} in the linear and (central) toric case, defined to be the set of pairs $(X,c)$ where $X$ is a ($\mathbb Q$-)linearly independent subset of $\{a_i\}_i$ and $c$ is a connected component of the intersection of the corresponding hypersurfaces \cite{Lenz,Martino}.\footnote{The definitions in \cite{Lenz,Martino} are formally in terms of pairs $(X,g)$ where $g$ is a torsion element of the quotient group $\Lambda / \langle X \rangle_{\mathbb Z}$. Such torsion elements are however in (natural) bijection with the connected components of the intersection of the hypersurfaces determined by the elements of $X$ (see, e.g., \cite{Moc1,DR}).}
  
In  the general (noncentral) case, one may still look at the poset of {\em layers} $\mathcal C(\mathscr A)$ described in Section \ref{ss:AA1}. The arithmetic matroid of the $\{a_i\}_i$ as well as -- in the linear and toric case -- the rational cohomology algebra of the arrangement's complement can be recovered from $\mathcal C(\mathscr A)$ \cite{dACDMP}. On the other hand, Pagaria \cite{Pagaria2} exhibited a pair of central toric arrangements with isomorphic arithmetic matroids (and matroids over $\mathbb Z $) but non-isomorphic posets of layers.

In order to model these posets we take the approach of \cite{DR}, and consider the (topological) universal covering morphism $$ \mu: \mathbb C^d \to \mathbb G^d.$$ The lift of $\mathscr A$ through this universal covering is a set $\mathscr A^\upharpoonright$ of (affine) complex codimension $1$ subspaces which is invariant under deck transformations. Now, the group of deck transformations acts by translations on $\mathbb C^d$ and is isomorphic to $\mathbb Z^{kd}$, with $k=0,1,2$ according to whether we are in the linear, toric, respectively the elliptic case.

\begin{ex}\label{RE:A} Let us take $\mathbb G=\mathbb C^* $ and $d=3$, so that $\Lambda=\mathbb Z^3$. Consider the arrangement defined by $a_1,\ldots, a_4$ given as the columns of the matrix
$$
\left[\begin{array}{cccc}
1 & 1 & 1 & 3\\
0 & 5 & 0 & 5\\
0 & 0 & 5 & 5
\end{array}
\right]
$$
and $b_1=\ldots = b_4=0$. The associated arrangement $\mathscr A$  is a central and essential arrangement in the torus $(\mathbb C^*)^3$, and it was first considered in \cite{Pagaria}. The arrangement $\mathscr A^{\upharpoonright}$ is the set $\{H_{i,j}\}_{i,j\in \mathbb Z}$ of all hyperplanes $H_{i,j}=\{z\in \mathbb C^3 \mid a_iz^T=j\}$.

\end{ex}

As is well-known \cite{Ard,DR}, every affine hyperplane arrangement such as $\mathcal A^\upharpoonright$ defines a semimatroid whose semilattice of flats is isomorphic to the arrangement's poset of intersections. In our case, associated to $\mathscr A ^\upharpoonright$ we have a semimatroid $\SSS=(S,\mathcal K,\rk)$ with $\mathcal L (\SSS) \simeq \mathcal C (\mathscr A^\upharpoonright)$. 
On this semimatroid the group of deck transformations acts, defining a $\mathbb Z^{kd}$-semimatroid $\mathfrak S_{\mathscr A}$.  

\begin{lem}\label{lem:AbAr} Let $\mathscr A$ be an Abelian arrangement. Then $\mathfrak S_{\mathscr A}$ is well-defined. Moreover,  
\begin{center}
(i) $
\mathcal I_{\mathfrak S_{\mathscr A}}\simeq \mathcal I(\mathscr A)$;
\hspace{2em}
(ii)
$\mathcal{P}_{\mathfrak S _{\mathscr A}}\simeq \mathcal C(\mathscr A)
$;
\hspace{2em}
(iii) $\mathfrak S_{\mathscr A}$ is refined if $\mathscr A$ is essential.
\end{center}
More precisely, $\mathfrak S_{\mathcal A}$ is $0,1,2$-refined according to whether $\mathscr A$ is a linear, toric or elliptic essential arrangement.
\end{lem}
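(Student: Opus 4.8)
The plan is to realise $\mathfrak S_{\mathscr A}$ explicitly through the covering $\mu$, to check the semimatroid and finiteness axioms, to identify the two quotient posets with the combinatorial data of $\mathscr A$, and finally to verify condition \stella{} by a rank computation on the deck group. \emph{Well-definedness.} Write $\mu_{\mathbb G}\colon\mathbb C\to\mathbb G$ for the one-dimensional universal covering, with deck group $\Lambda_{\mathbb G}\cong\mathbb Z^{k}$ where $k=0,1,2$ for $\mathbb G=\mathbb C,\mathbb C^{\ast},\mathbb E$; then $\mu=\mu_{\mathbb G}^{\times d}$ has deck group $D:=\Lambda_{\mathbb G}^{d}\cong\mathbb Z^{kd}$, acting on $\mathbb C^{d}$ by translations. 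Every homomorphism $a_i\colon\mathbb G^{d}\to\mathbb G$ lifts to a $\mathbb C$-linear functional $\tilde a_i\colon\mathbb C^{d}\to\mathbb C$ with integer matrix and $\tilde a_i(D)\subseteq\Lambda_{\mathbb G}$, so $\mu^{-1}(H_i)$ is a locally finite family of parallel affine hyperplanes and $\mathscr A^{\upharpoonright}$ is their union over $i$. This is a locally finite affine hyperplane arrangement, so by \cite{Ard} (see also \cite[\S 4]{DR}) it carries a semimatroid $\SSS=(S,\mathcal K,\rho)$ with $S=\mathscr A^{\upharpoonright}$ and $\rho$ the codimension of the intersection. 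Since $\mu\circ v=\mu$ for $v\in D$, the group $D$ permutes $\mathscr A^{\upharpoonright}$, and translations preserve incidences and codimensions, so $D$ acts on $\SSS$ by semimatroid automorphisms. There are finitely many $D$-orbits on $\mathcal K$: an $x\in\mathcal K$ is determined up to the $D$-action by its support in $\{1,\dots,n\}$ together with the connected component of the corresponding intersection in $\mathbb G^{d}$ into which $\bigcap x$ maps, and both data range over finite sets (the content of \cite[Remark 2.3]{DR}). Hence $\mathfrak S_{\mathscr A}\colon D\circlearrowright\SSS$ is a well-defined $\mathbb Z^{kd}$-semimatroid; for $\mathbb G=\mathbb C$ one has $D=\{0\}$ and recovers the matroid of $\mathscr A$.

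\emph{Parts (i) and (ii).} By Example \ref{rem:TAC} the induced action on $\mathcal L(\SSS)$ is translative, so $\mathcal P_{\mathfrak S_{\mathscr A}}$ and $\mathcal I_{\mathfrak S_{\mathscr A}}$ are posets and Lemma \ref{lem:low} describes their lower intervals. For (ii), $\mathcal L(\SSS)$ is the poset of intersections of $\mathscr A^{\upharpoonright}$ ordered by reverse inclusion, and $\mu$ carries each such intersection onto a connected component of an intersection of members of $\mathscr A$, inducing an inclusion-reversing bijection between $D$-orbits of intersections of $\mathscr A^{\upharpoonright}$ and layers of $\mathscr A$; translativity upgrades this to a poset isomorphism, giving $\mathcal P_{\mathfrak S_{\mathscr A}}=\mathcal L(\SSS)/D\simeq\mathcal C(\mathscr A)$ (this is \cite[Remark 2.3]{DR}). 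For (i), an independent set of $\SSS$ is a finite $I\subseteq\mathscr A^{\upharpoonright}$ with linearly independent normals and nonempty intersection; it lies over a set $X$ of $|I|$ distinct indices in $\{1,\dots,n\}$ and determines the component $c$ of $\bigcap_{i\in X}H_i$ containing $\mu(\bigcap I)$. I would check that $I\mapsto(X,c)$ descends to a bijection $\mathcal I(\SSS)/D\to\mathcal I(\mathscr A)$: surjectivity by lifting a point of $c$ and taking, for each $i\in X$, the unique member of $\mu^{-1}(H_i)$ through it; injectivity because $I$ is recovered from $\bigcap I$ and $X$ (the affine subspace $\bigcap I$ lies on exactly one member of each family $\mu^{-1}(H_i)$), while two independent sets with the same image have $D$-translated intersections. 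Order-preservation in both directions follows again from Lemma \ref{lem:low}, yielding $\mathcal I_{\mathfrak S_{\mathscr A}}\simeq\mathcal I(\mathscr A)$.

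\emph{Part (iii).} Assume $\mathscr A$ essential, so the $\tilde a_i$ span $(\mathbb C^{d})^{\ast}$ and $\rho(\SSS)=d$; also $D\cong\mathbb Z^{kd}$ is finitely generated free abelian, and translativity has been noted, so only \stella{} remains. Fix $x\in\mathcal K$ with $\rho(x)=r$, put $X=\supp(x)$ and let $A=(\tilde a_i)_{i\in X}\colon\mathbb C^{d}\to\mathbb C^{X}$, so $\dim_{\mathbb C}\ker A=d-r$. By Remark \ref{rem_traeq}, $\stab_D(x)=\bigcap_{i\in X}\stab_D(\cdot)=D\cap\ker A$, since a translation $v\in D$ fixes a member of $\mu^{-1}(H_i)$ precisely when $\tilde a_i(v)=0$. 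The quotient $D/\stab_D(x)$ is isomorphic to $A(D)\leq\Lambda_{\mathbb G}^{X}$, hence free abelian, so $\stab_D(x)$ is pure in $D$ and thus a direct summand by Remark \ref{gr_ref}.(i). Its rank equals $kd-\rk A(D)$, and $\rk A(D)=\dim_{\mathbb R}A(D\otimes_{\mathbb Z}\mathbb R)$. Now $D\otimes\mathbb R$ is trivial when $k=0$, equals the real span $\mathbb R^{d}\subseteq\mathbb C^{d}$ when $k=1$, and equals $\mathbb C^{d}=\mathbb R^{2d}$ when $k=2$; since $A$ is $\mathbb C$-linear with integer matrix, its image on $D\otimes\mathbb R$ has real dimension $0$, $r$, resp.\ $2r$, that is $kr$ in all cases. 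Hence $\rk\stab_D(x)=k(d-r)=k(\rho(\SSS)-\rho(x))$, which is exactly \stella{}, so $\mathfrak S_{\mathscr A}$ is $k$-refined, with $k=0,1,2$ for linear, toric, elliptic essential arrangements respectively.

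\emph{Main obstacle.} The poset identifications in parts (i)–(ii) are largely bookkeeping and can be quoted from \cite{DR}; the genuine content is the refinedness computation, whose delicate point is obtaining the factor $k$ uniformly in $\rk A(D)$. This rests on the fact that the deck lattice $D$ spans all of $\mathbb C^{d}$ over $\mathbb R$ precisely in the elliptic case — so the complex rank of $A$ gets doubled — whereas in the toric case it spans only a real form; it is here that I would take the most care, together with checking purity of $\stab_D(x)$ and the bookkeeping of supports versus connected components needed for the finiteness of orbits.
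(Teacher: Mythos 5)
Your proposal is correct and follows essentially the same route as the paper: lift $\mathscr A$ to the periodic arrangement $\mathscr A^{\upharpoonright}$ with the deck group $\mathbb Z^{kd}$ acting, identify orbits in $\mathcal K$ with pairs $(X,c)$ of a support and a connected component to get finiteness and the identifications (i)--(ii), note translativity via Example \ref{rem:TAC}, and verify \stella{} by a stabilizer-rank computation in which essentiality gives $\rho(\SSS)=d$. The only difference is cosmetic: where the paper asserts directly that $\stab(W)=L_t\cap W_0$ (resp.\ $L_e\cap W_0$) is a direct summand of rank $\dim_{\mathbb C}W$ (resp.\ $2\dim_{\mathbb C}W$), you spell this out via the integer matrix $A$ of lifted characters, purity from the torsion-free quotient $A(D)\le \Lambda_{\mathbb G}^{X}$ (which also justifies the identity $\rk A(D)=\dim_{\mathbb R}A(D\otimes\mathbb R)$, since $A(D)$ is discrete), and the real-span dimension count giving the factor $k$.
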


\begin{rem}
If $\mathscr A$ is central, then $T_{\mathfrak S}(x,y)$ corresponds to the arithmetic Tutte polynomial of the list of elements $a_1,\ldots,a_n$ of the Abelian group $\Lambda$, see \cite{Moc1}. 
\end{rem}

\begin{proof}[Proof of Lemma \ref{lem:AbAr}]
We start with a general remark by recalling that $\mathcal K$ is given by all sets of hyperplanes with nonempty intersection. Orbits of $\mathcal K$ under the deck transformation group correspond  bijectively to pairs $(X,c)$ where $X\subseteq \mathscr A$ and $c$ is a connected component of the intersection of the hypersurfaces in $X$.

Since $\mathscr A$ is finite and any intersection has only finitely many components, the finiteness-of-orbits condition in Definition \ref{def_gaos} follows and so $\mathfrak S_{\mathscr A}$ is well-defined.

For (i) and (ii), notice that $\mathcal I (\SSS)$ and $\mathcal L (\SSS)$ are subsets of $\mathcal K$, and orbits of the induced action are, respectively,
\begin{itemize}
    \item[--] for $\mathcal I_{\mathfrak S_{\mathscr A}}$: pairs $(X,c)$ where the characters defining the elements of $X$  are linearly independent (over $\mathbb Q$);
    \item[--] for $\mathcal P_{\mathfrak S_{\mathscr A}}$: pairs $(X,c)$ where the characters defining the elements of $X$  form a subset of $\{a_i\}_{i}$ that is closed under  linear dependency (over $\mathbb Q$).
\end{itemize}
 Comparing these descriptions with the definitions given above, claims (i) and (ii) follow.
 
For (iii) we first notice that the action is translative (see Example \ref{rem:TAC}), then we separate the three cases. In the linear case, the group is trivial, hence the action is clearly $0$-refined.  In the toric and elliptic case we can choose coordinates so that the action of the deck transformation group coincides with addition by elements of the sublattices $L_t:=\mathbb Z^d \subseteq \mathbb C^d$, resp.\ $L_e:=\mathbb Z^d + i\mathbb Z^d \subseteq \mathbb C^d$. The stabilizer subgroup of an affine subspace $W$ equals the stabilizer subgroup of its translate at the origin, hence it is a direct summand of $L_t$, resp.\ $L_e$, of rank equal to the lattice rank of $W\cap L_t$, resp.\ $W\cap L_e$. This rank equals $\dim_{\mathbb C} W$ (resp.\ $2\dim_{\mathbb C} W$). Now, essentiality of $\mathscr A$ implies that the minimal intersections of $\mathscr A^\upharpoonright$ have dimension $0$, and so that the poset rank of $W$ in $\mathcal C (\mathscr A^\upharpoonright)$ equals the codimension of $W$: $\rho(W)=d-\dim_{\mathbb C}W$. The stabilizer of $W$ has then rank $d-\rho(W)$ (resp.\ $2(d-\rho(W))$). Via the isomorphism $\mathcal C(\mathscr A^\upharpoonright ) \simeq \mathcal L (\SSS)$ we conclude that the stabilizer of every subset $X\in \mathcal K$, which coincides with the stabilizer of the intersection associated to $X$, has rank $d-\rho(X)$ (resp.\ $2(d-\rho(X))$). This proves that $\mathfrak S_{\mathscr A}$ is $1$-refined, resp.\ $2$-refined depending on whether we are in the toric or elliptic case.
\end{proof}

We are naturally led to the following definition.

\begin{defi}
Let $\mathscr A$ be an Abelian arrangement. The Stanley-Reisner ring of $\mathscr A$ is $\mathcal R(\mathscr A) := \mathcal R_{\mathfrak S_{\mathscr A}}$.
\end{defi}

 Our point of view allows us to also immediately deduce some properties of those rings for the general case of Abelian arrangements, which we state in the following summary of our results in the general context of the theory of Abelian arrangements.

\begin{thm}\label{thm:AbArr}
Let $\mathscr A$ be an Abelian arrangement (i.e., a linear, toric or elliptic arrangement) with associated $G$-semimatroid $\mathfrak S_{\mathscr A}$.
\begin{itemize}
    \item[(i)] The poset $\mathcal C(\mathscr A)$ is $CM(\delta_{\mathfrak S_{\mathscr A}})$.  Its (topological) Betti numbers are evaluations of the action's Tutte polynomial according to Corollary \ref{cor:HT}.
    \item[(ii)] The simplicial poset $\mathcal I(\mathscr A)$ is $CM(\delta_{\mathfrak S_{\mathscr A}})$.
    \item[(iii)] The arrangement's Stanley-Reisner ring $\mathcal R(\mathscr A)$ is $CM(\delta_{\mathfrak S_{\mathscr A}})$. This ring is isomorphic to the ring of invariants of the Stanley-Reisner ring associated to the periodic hyperplane arrangement $\mathscr A^\upharpoonright$.
    \item[(iv)] The $h$-polynomial of $\mathcal R(\mathscr A)$ is given by the action's Tutte polynomial as in Lemma \ref{lem:pols}.(i).
\end{itemize}
\end{thm}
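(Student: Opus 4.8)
The plan is to derive all four items from the general results of Sections \ref{sec:CFSP}--\ref{sec:matroids}, fed through the dictionary furnished by Lemma \ref{lem:AbAr}; no essentially new argument is needed beyond bookkeeping. As a preliminary reduction I would assume $\mathscr A$ essential: replacing $\mathscr A$ by its essentialization changes neither $\mathcal C(\mathscr A)$, nor $\mathcal I(\mathscr A)$, nor $\mathcal R(\mathscr A)$, nor the Tutte polynomial $T_{\mathfrak S_{\mathscr A}}$, and by Lemma \ref{lem:AbAr}(iii) it makes $\mathfrak S_{\mathscr A}$ a refined $G$-semimatroid, so that $\delta_{\mathfrak S_{\mathscr A}}$ is defined and Theorems \ref{thm_CMSM} and \ref{thm:PGCM} become available.

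For (ii), Lemma \ref{lem:AbAr}(i) identifies $\mathcal I(\mathscr A)$ with the finite simplicial poset $\mathcal I_{\mathfrak S_{\mathscr A}}$ (Remark \ref{rem:qps}), and Theorem \ref{thm_CMSM} says that $\overline{\mathcal I_{\mathfrak S_{\mathscr A}}}$ is $CM(\delta_{\mathfrak S_{\mathscr A}})$; since the order complex of a bounded-below poset is a cone over the order complex of its proper part, and coning preserves the Cohen--Macaulay property, this is exactly the assertion that $\mathcal I(\mathscr A)$ is $CM(\delta_{\mathfrak S_{\mathscr A}})$ in the sense of Definition \ref{def:CM}. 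For (i), Lemma \ref{lem:AbAr}(ii) gives $\mathcal C(\mathscr A)\simeq\mathcal P_{\mathfrak S_{\mathscr A}}$, whose proper part is $CM(\delta_{\mathfrak S_{\mathscr A}})$ by Theorem \ref{thm:PGCM}; the same coning observation upgrades this to the bounded-below poset $\mathcal C(\mathscr A)$. The statement on Betti numbers is then Corollary \ref{cor:HT} read through the isomorphism $\mathcal C(\mathscr A)\simeq\mathcal P_{\mathfrak S_{\mathscr A}}$ (and Lemma \ref{lem:pols}(ii), which expresses $\chi_{\mathcal P_{\mathfrak S_{\mathscr A}}}$, hence the reduced Euler characteristic, through $T_{\mathfrak S_{\mathscr A}}$).

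For (iii) and (iv): since the action is translative, Theorem \ref{thm:finitePL} identifies $\mathcal R(\mathscr A)=\mathcal R(\mathcal I_{\mathfrak S_{\mathscr A}})$ with Stanley's ring $A_{\mathcal I_{\mathfrak S_{\mathscr A}}}$ of the finite simplicial poset $\mathcal I_{\mathfrak S_{\mathscr A}}$. By (ii) this poset is $CM(\delta_{\mathfrak S_{\mathscr A}})$, so the proposition recalled in \S\ref{sec:StanleyFin} (Cohen--Macaulayness of a simplicial poset passes to its face ring in the same characteristic) shows $\mathcal R(\mathscr A)$ is $CM(\delta_{\mathfrak S_{\mathscr A}})$, and the same proposition identifies its algebraic $h$-polynomial with the combinatorial $h$-polynomial $h_{\mathcal I_{\mathfrak S_{\mathscr A}}}(t)$, which equals $t^{d}T_{\mathfrak S_{\mathscr A}}(1/t,1)$ ($d=\rho(\mathfrak S_{\mathscr A})$) by Lemma \ref{lem:pols}(i); this is (iv). For the last clause of (iii), let $\SSS$ be the semimatroid of $\mathscr A^{\upharpoonright}$ and $G\cong\mathbb Z^{kd}$ the deck group, which acts translatively on the finite-length simplicial poset $\mathcal I(\SSS)$ with quotient $\mathcal I(\SSS)/G=\mathcal I_{\mathfrak S_{\mathscr A}}$; then Theorem \ref{thm:invariant} yields $\mathcal R(\mathscr A)=\mathcal R(\mathcal I_{\mathfrak S_{\mathscr A}})\cong\mathcal R(\mathcal I(\SSS))^{G}$, the invariant subring of the Stanley--Reisner ring of the periodic arrangement $\mathscr A^{\upharpoonright}$.

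The theorem is thus essentially an assembly of earlier results; the only points requiring care are the preliminary reduction to essential arrangements (needed merely so that $\delta_{\mathfrak S_{\mathscr A}}$ is defined) and the harmless identification, via coning, of the Cohen--Macaulay property of $\mathcal C(\mathscr A)$ and $\mathcal I(\mathscr A)$ with that of the proper parts treated by Theorems \ref{thm:PGCM} and \ref{thm_CMSM}. I do not expect any genuine obstacle.
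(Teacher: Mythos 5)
Your proposal is correct and follows essentially the same route as the paper, whose proof is just the one-line assembly ``(i) is Theorem \ref{thm:PGCM}, (ii) follows from Theorem \ref{thm_CMSM}, (iii) combines Theorems \ref{thm:invariant} and \ref{thm:finitePL}, (iv) follows from Lemma \ref{lem:pols}.(i)''. Your extra care about the reduction to essential arrangements (so that Lemma \ref{lem:AbAr}(iii) applies and $\delta_{\mathfrak S_{\mathscr A}}$ is defined), about passing between a bounded-below poset and its proper part, and about invoking Stanley's result that Cohen--Macaulayness of a finite simplicial poset transfers to its face ring, are all points the paper leaves implicit, and you fill them in correctly.
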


\begin{proof}
Item (i) is Theorem \ref{thm:PGCM}, item (ii) follows from Theorem \ref{thm_CMSM} and \cite{BiPr,WW}, (iii) combines Theorem \ref{thm:invariant} and Theorem \ref{thm:finitePL}. Finally, (iv) follows with  Lemma \ref{lem:pols}.(i).
\end{proof}

\begin{rem}
Notice that when $\mathscr A$ is central and toric, via the case $k=1$ of Lemma \ref{lem:AbAr} we recover the ring of \cite{Lenz,Martino}, where item (iv) of Theorem \ref{thm:AbArr} is proved in the corresponding situation. 
\end{rem}

\begin{ex}[Continued from Example \ref{RE:A}]\label{RE:B} The additive group of translations $G:=\mathbb Z^3\subseteq \mathbb C^3$ acts on the arrangement $\mathscr A^{\upharpoonright}$ and hence also on its semimatroid. Every intersection $X$ of the hyperplanes from $\mathscr A^{\upharpoonright}$ is parallel to some linear subspace $X_0$ obtained intersecting the hyperplanes of the arrangement $\mathscr A_0:=\{H_{i,0}\}_{i=1,\ldots,5}$, hence has the same stabilizer subgroup. Now, $\stab(X_0)=\mathbb Z^{3}\cap X_0$ is a free abelian group of rank equal to the dimension of $X_0$ and $X$, which -- since $\mathcal L(\mathscr A^\upharpoonright)$ is ordered by reverse inclusion -- equals $(3-\rk(X))$, showing that the action $\mathfrak S: G\circlearrowright \mathcal L(\mathscr A^{\upharpoonright})$ is $1$-refined as is expected for a toric arrangement.

Let us now compute the number $\delta_{\mathfrak S}$. A maximal independent set of the semimatroid associated to $\mathscr A^{\upharpoonright}$ is a triple $B:=\{H_{i_1,j_1}, H_{i_2,j_2}, H_{i_3,j_3}\}$ of hyperplanes whose intersection is nonempty (and has dimension $0$). Now, the stabilizer of $B$ is trivial, therefore $G^{(B)}=G$. We have $H_1^{(B)}=\stab_{G}(B\setminus H_{i_1,j_1})=\mathbb Z^3\cap H_{i_2,0}\cap H_{i_3,0}$, a subgroup of rank $1$ of which we can choose a generator $w_1$, and similarly we can choose generators $w_2$ for $H_2^{(B)}$ and $w_3$ for $H_3^{(B)}$. The $w_1$ are well-defined up to sign reversal. Then, $\delta_{\mathfrak S}(B)=\vert \det(w_1,w_2,w_3) \vert$. The following table lists the different values for all possible $B$.
 
\newcommand{\tvec}[3]{\left(\begin{array}{c}
    #1 \\ #2 \\ #3
    \end{array}\right)}

$$
\begin{array}{c|c|c|c||c}
B \textrm{ (ordered)} & w_1 & w_2 & w_3 & \delta_{\mathfrak S}(B)\\\hline
H_{1,\ast},H_{2,\ast},H_{3,\ast} &
   \tvec{5}{-1}{-1} & \tvec{0}{1}{0} &\tvec{0}{0}{1}& 5 \\
H_{1,\ast},H_{2,\ast},H_{4,\ast} &
   \tvec{5}{-1}{-2} & \tvec{0}{1}{-1} &\tvec{0}{0}{1}& 5 \\
H_{1,\ast},H_{3,\ast},H_{4,\ast} &
   \tvec{5}{-2}{-1} & \tvec{0}{1}{-1} &\tvec{0}{1}{0}& 5 \\
H_{2,\ast},H_{3,\ast},H_{4,\ast} &
   \tvec{5}{-2}{-1} & \tvec{5}{-1}{-2} &\tvec{5}{-1}{-1}& 5 \\         
\end{array}    
$$

We conclude that $\delta_{\mathfrak S} = \operatorname{gcd}\{5,5,5,5\}=5$, hence both $\overline{\mathcal I(\mathscr A)}$ and $\overline{\mathcal C (\mathscr A)}$ are Cohen-Macaulay in characteristic $0$ and every characteristic that does not divide $5$. In fact, as was computed in \cite[Section 8]{PP}, for both posets the first integer homology group is $\mathbb Z / 5\mathbb Z$ (while the second homology group is free in both cases).
\end{ex}

\subsection{$(p,q)$-arrangements} We close by considering a class of arrangements introduced by Liu, Tran and Yoshinaga \cite{LTY}, which we call ``$(p,q)$-arrangements''. Given two integers $p,q\in \mathbb N$, consider the connected Abelian Lie group $$\mathbb G_{p,q}:=(S^1)^p\times \mathbb R^q.$$
Every $a\in \mathbb Z^d$ induces a group homomorphism $\varphi_a:(\mathbb G_{p,q})^d\mapsto \mathbb G_{p,q}, z\mapsto \sum a_iz_i$.
Every finite subset $A\subseteq  \mathbb Z^d\setminus \{0\}$ defines the $(p,q)$-arrangement
$$
\mathscr A:=\{\ker \varphi_{a} \mid a\in A\}.
$$

\begin{rem}\label{rem:LTA}
Central linear arrangements can be naturally regarded as $(0,2)$-arrangements; central toric arrangements as $(1,1)$-arrangements, and central elliptic arrangements as $(2,0)$-arrangements.
\end{rem}

We can identify the universal cover of $\mathbb G_{p,q}$ with $\mathbb R^p\times \mathbb R^q$, and the arrangement $\mathscr A$ in $(\mathbb G_{p,q})^d$ lifts to a $\mathbb Z^{pd}$-periodic arrangement $\mathscr A^{\upharpoonright}$ of affine subspaces in $(\mathbb R^{p}\times\mathbb R^{q})^d$. 

\begin{lem}
The poset of intersections $\mathcal C(\mathscr A^{\upharpoonright})$ is a geometric semilattice. Thus, $\mathscr A$ naturally defines a $\mathbb Z^{pd}$-semimatroid $\mathfrak S_{\mathscr A}$.
\end{lem}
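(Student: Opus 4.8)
The plan is to make the lift explicit, recognize its poset of layers as the poset of flats of a semimatroid, and then read off the group action. By the paragraph preceding the statement, the universal covering of $\mathbb{G}_{p,q}=(S^1)^p\times\mathbb{R}^q$ is the quotient $\mathbb{R}^{p+q}\to\mathbb{R}^{p+q}/(\mathbb{Z}^p\times\{0\}^q)=\mathbb{G}_{p,q}$, so that $(\mathbb{G}_{p,q})^d$ has universal covering $\mu\colon\mathbb{R}^{(p+q)d}\to(\mathbb{G}_{p,q})^d$ with deck group $\mathbb{Z}^{pd}$ acting by translations. Each $\varphi_a$ lifts to the linear surjection $\widetilde{\varphi}_a\colon\mathbb{R}^{(p+q)d}\to\mathbb{R}^{p+q}$, $z\mapsto\sum_i a_iz_i$, and $\mu^{-1}(\ker\varphi_a)=\widetilde{\varphi}_a^{-1}(\mathbb{Z}^p\times\{0\}^q)$ is the disjoint union of the parallel affine subspaces $\widetilde{\varphi}_a^{-1}(\gamma)$, $\gamma\in\mathbb{Z}^p\times\{0\}^q$, each of codimension $p+q$. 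Hence $\mathscr{A}^{\upharpoonright}=\{\widetilde{\varphi}_a^{-1}(\gamma)\mid a\in A,\ \gamma\in\mathbb{Z}^p\times\{0\}^q\}$ is a locally finite (since $A$ is finite and the family is periodic with compact fundamental domain), $\mathbb{Z}^{pd}$-periodic arrangement of affine subspaces of $\mathbb{R}^{(p+q)d}$.

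The heart of the argument is then to show that $\mathcal{C}(\mathscr{A}^{\upharpoonright})$ is a geometric semilattice. Intersections of subspaces of $\mathscr{A}^{\upharpoonright}$ are affine subspaces of a real vector space, hence connected, so $\mathcal{C}(\mathscr{A}^{\upharpoonright})$ is the poset of nonempty such intersections ordered by reverse inclusion. Using the identification $\mathbb{R}^{(p+q)d}\cong\bigoplus_{j=1}^{p+q}\mathbb{R}^d$ one sees that $\bigcap_{(a,\gamma)\in X}\widetilde{\varphi}_a^{-1}(\gamma)=\prod_{j=1}^{p+q}W^X_j$, where $W^X_j=\{y\in\mathbb{R}^d\mid a^Ty=\gamma_j\text{ for all }(a,\gamma)\in X\}$ is an affine flat of the hyperplane arrangement of $A$ in $\mathbb{R}^d$ with integer right-hand sides for $j\le p$ and zero right-hand side for $j>p$; in particular $W^X_j$ is always nonempty for $j>p$. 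Thus $\mathcal{C}(\mathscr{A}^{\upharpoonright})$ is isomorphic to the poset of flats of the triple $\SSS=(S,\mathcal{K},\rho)$ with $S=A\times(\mathbb{Z}^p\times\{0\}^q)$, with $\mathcal{K}$ the (simplicial) complex of those $X\subseteq S$ for which all $W^X_j$ are nonempty — equivalently, for which the assignment $a\mapsto\gamma$ extends to a linear map on $\operatorname{span}\{a\mid(a,\gamma)\in X\}$ — and with $\rho(X)$ the rank of $\{a\mid(a,\gamma)\in X\}$ (so that the codimension of $\bigcap_{(a,\gamma)\in X}\widetilde{\varphi}_a^{-1}(\gamma)$ equals $(p+q)\rho(X)$). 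That $\SSS$ is a semimatroid follows exactly as in the classical case of affine hyperplane arrangements \cite{Ard,DR}, since locally, inside any consistent fiber, $\rho$ agrees with the matroid rank function of the list $A$; alternatively one invokes that $\mathscr{A}^{\upharpoonright}$ is an affine arrangement within the generality of \cite[\S 4]{DR}. By \cite[Theorem E]{DR} (or the general fact recalled before the statement) $\mathcal{C}(\mathscr{A}^{\upharpoonright})\simeq\mathcal{L}(\SSS)$ is therefore a geometric semilattice.

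Finally, the deck group $\mathbb{Z}^{pd}$ acts on $\mathscr{A}^{\upharpoonright}$ by translations, hence by automorphisms on $\mathcal{C}(\mathscr{A}^{\upharpoonright})$ and, equivalently, on the semimatroid $\SSS$, preserving $\mathcal{K}$ and $\rho$. Since $A$ is finite and the action on $\mathbb{R}^{(p+q)d}$ is cocompact, there are only finitely many orbits of elements of $\mathcal{K}$ (each such element has at most $|A|$ members and can be translated so that one member meets a fixed fundamental domain, leaving finitely many possibilities). Therefore $\mathfrak{S}_{\mathscr{A}}\colon\mathbb{Z}^{pd}\circlearrowright\SSS$ satisfies Definition \ref{def_gaos}, which proves the statement. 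I expect the main obstacle to be the middle step: one must use that $\mathscr{A}^{\upharpoonright}$ is not an arbitrary arrangement of codimension-$(p+q)$ subspaces but a lift from a connected Abelian Lie group, so that the consistency conditions cutting out its layers decouple, coordinate by coordinate in the $p+q$ directions, into those of an ordinary affine hyperplane arrangement — which is precisely what forces the poset of layers to be a geometric semilattice rather than an arbitrary meet-semilattice.
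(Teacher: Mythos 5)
Your overall strategy is sound and genuinely different from the paper's. You describe the lift correctly: the components of $\mu^{-1}(\ker\varphi_a)$ are the fibres $\widetilde{\varphi}_a^{-1}(\gamma)$ over \emph{all} $\gamma\in\mathbb Z^p\times\{0\}^q$ (the paper's proof lists only the ``diagonal'' subspaces $S_{a,k}$ with a single repeated $k$, a family which for $p\geq 2$ is not even deck-invariant, so your description is actually the complete one), and you then try to realize $\mathcal C(\mathscr A^{\upharpoonright})$ directly as $\mathcal L(\SSS)$ for an explicitly constructed semimatroid $\SSS$, invoking the general equivalence between semimatroids and geometric semilattices. The paper instead reduces to the affine \emph{hyperplane} arrangement $\mathscr B=\{H_a^{(k)}\mid a\in A,\ k\in\mathbb Z\}$ in $\mathbb R^d$, asserts a poset isomorphism $\mathcal C(\mathscr A^{\upharpoonright})\simeq\mathcal C(\mathscr B)$, and quotes the classical fact for hyperplane arrangements. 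Your route buys correctness on the description of the lift; the paper's buys a one-line justification of the geometric-semilattice property.

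The step you have not actually proved, however, is the crucial one: that your triple $\SSS=(S,\mathcal K,\rho)$ satisfies the semimatroid axioms. ``Follows exactly as in the classical case of affine hyperplane arrangements'' is not automatic, because the members of $\mathscr A^{\upharpoonright}$ have codimension $p+q$, not $1$, and posets of layers of general subspace arrangements are \emph{not} geometric semilattices; likewise \cite[\S 4]{DR} does not assert that arbitrary affine subspace arrangements yield semimatroids, so your ``alternative'' justification does not apply as stated. What closes the gap is precisely the block decomposition you wrote down: $X\in\mathcal K$ if and only if each $W_j^X$ is nonempty, and $\rho(X)$ is the common rank of the corresponding flats of $\mathscr B$, so the rank and exchange axioms for $\SSS$ reduce, block by block, to the corresponding facts for the affine hyperplane arrangement $\mathscr B$ in $\mathbb R^d$ -- you need to say this explicitly and verify at least the exchange/compatibility axioms, since that verification is the entire content of the lemma. (The paper packages the same idea as the isomorphism with $\mathcal C(\mathscr B)$.) A separate, smaller slip: your finiteness-of-orbits argument invokes cocompactness of $\mathbb Z^{pd}\circlearrowright\mathbb R^{(p+q)d}$, which fails for $q>0$; the correct and easy argument is the one the paper uses for Abelian arrangements, namely that orbits of elements of $\mathcal K$ biject with pairs $(X,c)$ where $X\subseteq\mathscr A$ and $c$ is a connected component of $\bigcap X$, and both sets are finite.
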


\begin{proof}
Write $(\mathbb G_{p,q})^d$ as $((S^1)^d)^p\times (\mathbb R^d)^q$. Then, for every homomorphism $\varphi_a$ the hypersurface $\ker \varphi_a$ is of the form $(K_a)^{p}\times (H_a^{(0)})^{q}$, where $K_a:=\{x\in (S^1)^d \mid \prod_i (x_i)^{a_i} = 1\}$ and, for every $k\in \mathbb Z$, we define the hyperplane $H_a^{(k)}:=\{y\in \mathbb R^d \mid \sum_i a_iy_i = k\}$. Accordingly, passing to the universal cover, the elements of the arrangement $\mathscr A^{\upharpoonright}$ are all affine subspaces of the form
$$
S_{a,k}:=\underbrace{H_a^{(k)}\times \ldots \times H_a^{(k)}}_{p \textrm{ times}} \times 
\underbrace{H_a^{(0)}\times\ldots\times H_a^{(0)}
}_{q \textrm{ times}}
$$
for a given $k\in\mathbb Z$ and $a\in A$. 

Consider the affine hyperplane arrangement $\mathscr B:=\{H_a^{(k)} \mid a\in A, k\in \mathbb Z\}$ in $\mathbb R^d$ and, for every affine subspace $V$ of $\mathbb R^d$, let $V_0$ denote its translate at the origin. 
Then
$$\mathcal C(\mathscr A^{\upharpoonright})
=\{V^p\times V_0^q \mid V\in \mathcal C(\mathscr B)\}
$$ 
from which we see that there is a poset isomorphism 
$
\mathcal C(\mathscr A^{\upharpoonright}) \simeq \mathcal C(\mathscr B).$
Now, $\mathcal C(\mathscr B)$ is a geometric semilattice because $\mathscr B$ is an arrangement of hyperplanes. This proves that $\mathcal C(\mathscr A^{\upharpoonright})$ is a geometric semilattice.

The action of $\mathbb Z^{pd}$ on $\mathcal C(\mathscr A^{\upharpoonright})$ is induced by its action on $\mathscr A^{\upharpoonright}$ as the deck transformation group of the universal covering of $(\mathbb G_{p,q})^d$, which coincides with the action (by translations) of the discrete subgroup  $\mathbb Z^{pd}\times \{0\}^{qd}\subset \mathbb R^{dp}\times \mathbb R^{dq}$. This action is translative because it falls under Example \ref{rem:TAC}.
 Moreover, the stabilizer of any $V^p\times V_0^q \in \mathcal C (\mathscr A^{\upharpoonright})$ is the subgroup $(\mathbb Z^{pd}\times \{0\}^{qd} )\cap (V_0^p\times V_0^q) $, which is a direct summand of rank $\dim V_0^p = p\dim V$. Now, via the isomorphism $
\mathcal C(\mathscr A^{\upharpoonright}) \simeq \mathcal C(\mathscr B)$ we see that the rank of $V^p\times V_0^q$ in $\mathcal C (\mathscr A^{\upharpoonright})$ equals the rank of $V$ in $\mathcal C (\mathscr B)$, which is the codimension of $V$. We conclude that the action is $p$-refined.
 
\end{proof}

In analogy with the previous sections we make the following definition.

\begin{defi}
Let $\mathscr A$ be a $(p,q)$-arrangement. The Stanley-Reisner ring of $\mathscr A$ is $\mathcal R(\mathscr A) := \mathcal R_{\mathfrak S_{\mathscr A}}$.
\end{defi}

Notice that, if $\mathscr A$ is the $(p,q)$ arrangement associated to a linear, toric or elliptic arrangement (cf.\ Remark \ref{rem:LTA}) we recover the rings defined in Subsection \ref{ss:AA}. In general, we immediately obtain the following analogue of Theorem \ref{thm:AbArr}.

\begin{thm}\label{thm:pqArr}
Let $\mathscr A$ be a $(p,q)$-arrangement with associated $G$-semimatroid $\mathfrak S_{\mathscr A}$.
\begin{itemize}
    \item[(i)] The poset $\mathcal C(\mathscr A)$ is $CM(\delta_{\mathfrak S_{\mathscr A}})$.  Its (topological) Betti numbers are evaluations of the action's Tutte polynomial via the same formula as in  Corollary \ref{cor:HT}.
    \item[(ii)] The simplicial poset $\mathcal I_{\mathfrak S_\mathscr A}$ is $CM(\delta_{\mathfrak S_{\mathscr A}})$.
    \item[(iii)] The arrangement's Stanley-Reisner ring $\mathcal R(\mathscr A)$ is $CM(\delta_{\mathfrak S_{\mathscr A}})$. This ring is isomorphic to the ring of invariants of the Stanley-Reisner ring associated to the periodic subspace arrangement $\mathscr A^\upharpoonright$.
    \item[(iv)] The $h$-polynomial of $\mathcal R(\mathscr A)$ is given by the action's Tutte polynomial as in Lemma \ref{lem:pols}.(i).
\end{itemize}
\end{thm}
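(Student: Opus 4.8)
The plan is to run the proof of Theorem~\ref{thm:AbArr} essentially word for word, since the only property of the arrangement that entered there was that $\mathfrak S_{\mathscr A}$ is a refined $G$-semimatroid, and the lemma just proved supplies exactly this together with the identifications $\mathcal L(\SSS)\simeq\mathcal C(\mathscr A^{\upharpoonright})\simeq\mathcal C(\mathscr B)$. Concretely, that lemma shows $\mathfrak S_{\mathscr A}$ is well defined and $p$-refined, so Theorems~\ref{thm_CMSM}, \ref{thm:PGCM}, \ref{thm:invariant} and Lemma~\ref{lem:pols} all apply to it directly; one also notes, since the action is translative by Example~\ref{rem:TAC}, that $\mathcal I_{\mathfrak S_{\mathscr A}}$ is a finite simplicial poset (Remark~\ref{rem:qps}).

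Before invoking these results I would settle the one point that is genuinely new, namely the poset identification $\mathcal C(\mathscr A)\simeq\mathcal P_{\mathfrak S_{\mathscr A}}=\mathcal L(\SSS)/G$. This is the $(p,q)$-analogue of Lemma~\ref{lem:AbAr}.(ii) and of \cite[Remark~2.3]{DR}, and I would prove it the same way: the universal cover $\mu\colon\mathbb R^{p}\times\mathbb R^{q}\to\mathbb G_{p,q}$ is a regular covering with deck group $\mathbb Z^{pd}$, so for every $\mathscr X\subseteq\mathscr A$ the connected components of $\cap\mathscr X$ are precisely the images under $\mu$ of the $\mathbb Z^{pd}$-orbits of the (connected, affine) subspaces occurring as intersections of the corresponding lifts in $\mathscr A^{\upharpoonright}$, and the correspondence respects reverse inclusion. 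Composing this with the isomorphism $\mathcal C(\mathscr A^{\upharpoonright})\simeq\mathcal L(\SSS)$ from the preceding lemma gives the claimed identification.

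Granting this, the four items become citations. Item~(ii) is Theorem~\ref{thm_CMSM} applied to $\mathfrak S_{\mathscr A}$. Item~(i): that $\check{\mathcal P_{\mathfrak S_{\mathscr A}}}$ --- that is, $\mathcal C(\mathscr A)$ with its bottom element removed --- is $CM(\delta_{\mathfrak S_{\mathscr A}})$ is Theorem~\ref{thm:PGCM}, and the Betti numbers are then the Tutte-polynomial evaluations of Corollary~\ref{cor:HT}, whose derivation via Lemma~\ref{lem:EuCar} and Lemma~\ref{lem:pols} transfers verbatim. Item~(iii): by Theorem~\ref{thm:invariant}, $\mathcal R(\mathscr A)=\mathcal R(\mathcal I(\SSS)/G)\cong\mathcal R(\mathcal I(\SSS))^{G}$, which exhibits $\mathcal R(\mathscr A)$ as the invariant subring of the Stanley--Reisner ring of the periodic subspace arrangement $\mathscr A^{\upharpoonright}$; since $\mathcal I_{\mathfrak S_{\mathscr A}}$ is a finite simplicial poset, Theorem~\ref{thm:finitePL} identifies $\mathcal R(\mathscr A)$ with Stanley's ring $A_{\mathcal I_{\mathfrak S_{\mathscr A}}}$, whose Cohen--Macaulayness in characteristic $0$ and in every characteristic not dividing $\delta_{\mathfrak S_{\mathscr A}}$ follows from item~(ii) and the fact that the Cohen--Macaulay property of a simplicial poset passes to its face ring. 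Item~(iv) is Lemma~\ref{lem:pols}.(i) for $\mathfrak S_{\mathscr A}$.

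I do not expect a serious obstacle: the substance is packaged in the notion of a refined $G$-semimatroid and was handled once and for all in Sections~\ref{sec:RACM}--\ref{sec:matroids}, while the content specific to $(p,q)$-arrangements --- that $\mathfrak S_{\mathscr A}$ is refined --- is exactly the preceding lemma. The only step deserving mild care is the covering-space identification of $\mathcal C(\mathscr A)$ with $\mathcal P_{\mathfrak S_{\mathscr A}}$, where one must use that $\mu$ is a \emph{regular} covering with deck group exactly $\mathbb Z^{pd}$, so that layers downstairs correspond bijectively to deck-orbits of layers upstairs with no extra identifications.
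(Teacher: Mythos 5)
Your proposal is correct and matches the paper's approach: the paper proves this result exactly as it proves Theorem~\ref{thm:AbArr}, by citing Theorems~\ref{thm:PGCM}, \ref{thm_CMSM}, \ref{thm:invariant}, \ref{thm:finitePL} and Lemma~\ref{lem:pols} once the preceding lemma has established that $\mathfrak S_{\mathscr A}$ is a ($p$-)refined $G$-semimatroid. Your extra care in spelling out the covering-space identification $\mathcal C(\mathscr A)\simeq\mathcal P_{\mathfrak S_{\mathscr A}}$, which the paper leaves implicit (cf.\ Lemma~\ref{lem:AbAr}.(ii) and \cite[Remark 2.3]{DR}), is a welcome but inessential supplement.
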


\begin{rem}
For a $(p,q)$-arrangement $\mathscr A$, the Tutte polynomial of $\mathfrak S_{\mathscr A}$ in the sense of \cite{DR}, which we mention in the Theorem above, equals the $\mathbb G_{p,q}$-Tutte polynomial considered in \cite{LTY} (via, e.g., Proposition 3.6 and \S 4 of \cite{LTY}). 
\end{rem}

\bibliography{Bib_ISR}{}
\bibliographystyle{plain}

\end{document}